\documentclass[12pt,reqno]{article}

\usepackage[usenames]{color}
\usepackage{amssymb}
\usepackage{graphicx}
\usepackage{amscd}

\usepackage[colorlinks=true,
linkcolor=webgreen,
filecolor=webbrown,
citecolor=webgreen]{hyperref}

\definecolor{webgreen}{rgb}{0,.5,0}
\definecolor{webbrown}{rgb}{.6,0,0}
\definecolor{red}{rgb}{1,0,0}

\def\Enn{\mathbb{N}}

\def\Preals{\mathbb{R}^{+}}
\def\pal{{\tt PAL}}
\def\apal{{\tt APAL}}
\def\suchthat{\, : \,}

\newcommand{\ceil}[1]{\left\lceil#1\right\rceil}
\newcommand{\abs}[1]{\left|#1\right|}

\def\qpal{Q_{\rm pal}}
\def\qapal{Q_{\rm apal}}

\usepackage{color}
\usepackage{fullpage}
\usepackage{float}

\usepackage{graphics,amsmath,amssymb}
\usepackage{amsthm}
\usepackage{amsfonts}
\usepackage{latexsym}
\usepackage{epsf}

\newcommand{\seqnum}[1]{\href{http://oeis.org/#1}{\underline{#1}}}

\DeclareMathOperator{\pald}{pald}

\begin{document}

\theoremstyle{plain}
\newtheorem{theorem}{Theorem}
\newtheorem{corollary}[theorem]{Corollary}
\newtheorem{lemma}[theorem]{Lemma}
\newtheorem{proposition}[theorem]{Proposition}
\newtheorem{hprinciple}{Heuristic Principle}

\theoremstyle{definition}
\newtheorem{definition}[theorem]{Definition}
\newtheorem{example}[theorem]{Example}
\newtheorem{conjecture}{Conjecture}
\newtheorem{openproblem}{Open Problem}

\theoremstyle{remark}
\newtheorem{remark}[theorem]{Remark}

\def\modd#1 #2{#1\ \mbox{\rm (mod}\ #2\mbox{\rm )}}

\author{James Haoyu Bai, Joseph Meleshko, Samin Riasat,\footnote{Author's current address:  Department of Electrical Engineering and Computer Science,
University of Michigan,
2260 Hayward Street,
Ann Arbor, MI 48109-2121, USA.} \ and Jeffrey Shallit\footnote{Research supported in part by NSERC grant 2018-04118.}\\
School of Computer Science\\
University of Waterloo\\
Waterloo, ON  N2L 3G1 \\
Canada\\
{\tt \{jhbai,jmeleshko,sriasat,shallit\}@uwaterloo.ca}}

\title{Quotients of Palindromic and Antipalindromic Numbers}

\maketitle

\begin{abstract}
A natural number $N$ is said to be {\it palindromic\/} if its binary representation reads the same forwards and backwards.   In this paper we study the quotients of two palindromic numbers and answer some basic questions about the resulting sets of integers and rational numbers.   For example, we show that the following problem is algorithmically decidable:  given an integer $N$, determine if we can write $N = A/B$ for palindromic numbers $A$ and $B$.   Given that
$N$ is representable, we find a bound on the size of the numerator of the smallest representation.
We prove that the set of unrepresentable integers has positive density in $\Enn$.
We also obtain similar results for quotients
of antipalindromic numbers (those for which the first half of the 
binary representation is the reverse complement of the second half).  We also
provide examples, numerical data, and a number of intriguing conjectures and open problems.
\end{abstract}

\section{Introduction}

Let $\Enn = \{0,1,2,\ldots \}$ denote the natural numbers, and
let $P, Q \subseteq \Enn$ be two given subsets.  
Define the {\it quotient set}
$$ P/Q = \{ p/q \suchthat p \in P, \ q \in Q - \{ 0 \} \}.$$
In the special case where $P = Q$, the set $P/P$ is also known as a 
{\it ratio set\/} in the literature \cite{Bukor&Erdos&Salat&Toth:1997,Bukor&Toth:2003,Kijonka:2007,Luca&Pomerance&Porubsky:2008,Misik:2002, Misik&Toth:2003,Miska&Toth&Zmija:2022,Salat:1969,Salat:2000,Sierpinski:1988,Strauch&Toth:1998,Toth&Zsilinszky:1995}.
Given $P$ and $Q$, six classical problems of number theory are as follows:
\begin{enumerate}
\item What is the topological closure of $P/Q$ in $\Preals$?  In particular,
is $P/Q$ dense in the positive reals $\Preals$?  

\item Consider the following computational problem:  given an integer $N$, is $N \in P/Q$?   Is it algorithmically
decidable? Efficiently decidable?

\item Suppose $N \in P/Q$.  What are good upper and lower bounds on the size of the
smallest representation $N = A/B$ for 
$A \in P$, $B \in Q$?

\item  What are the integers in $P/Q$?
Are there infinitely many?
Are there infinitely many integers not
so representable?   What are the lower and upper densities of
representable and unrepresentable integers in $\Enn$?  
(The lower density of a set $S \subseteq \Enn$ is
$\liminf_{n\rightarrow \infty} {1 \over n} |S \, \cap \, \{1,2, \ldots, n \}|$
and the upper density is
$\limsup_{n\rightarrow \infty} {1 \over n} |S \, \cap \, \{1,2, \ldots, n \}|$.)

\item Given that an integer $N$ belongs to $P/Q$, how many such
representations are there?

\item What are the rational numbers in $P/Q$?

\end{enumerate}

These are, in general, very difficult questions to answer; for some sets $P$, $Q$,
we can even prove that some variations are undecidable \cite[Thm.~5]{Endrullis&Shallit&Smith:2017}.
Let us look at some examples of each of these problems in the literature.

\subsection{Problem 1:  denseness}
\label{subsec11}

As an example of Problem 1, Sierpi\'nski  \cite[p.~165]{Sierpinski:1988}
proved that if $P = Q = {\cal P} = \{ 2,3,5,\ldots \}$,
the set of prime numbers, then $P/Q$ is dense in $\Preals$.  Also see
\cite{Hobby&Silberger:1993,Starni:1995}.  More generally, there is a
criterion originally due to Narkiewicz and {\v{S}al{\'a}t} \cite{Narkiewicz&Salat:1984}, as follows:
\begin{theorem}
Suppose $P = \{ a_1, a_2, \ldots \} \subseteq \Enn$ with
$a_i < a_{i+1}$ for all $i$.  If $\lim_{n \rightarrow \infty}
a_{n+1}/a_n = 1$, then $P/P$ is dense in the positive reals.
\label{nark}
\end{theorem}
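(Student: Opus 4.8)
The plan is to verify denseness directly from the definition: I will show that for every target $x \in \Preals$ and every $\varepsilon > 0$ there exist indices $m,n$ with $\abs{a_m/a_n - x} < \varepsilon$. The only auxiliary growth fact I need, beyond the hypothesis $a_{n+1}/a_n \to 1$, is that $(a_i)$ being a strictly increasing sequence of natural numbers forces $a_n \to \infty$.

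First I would fix $x \in \Preals$. For each index $n$, since $a_k \to \infty$ the set $\{\, k : a_k \geq x\, a_n \,\}$ is nonempty, so I can define $m = m(n)$ to be its least element. By minimality of $m$ I get the sandwich
$$ a_{m-1} < x\, a_n \leq a_m, $$
which is valid as soon as $n$ is large enough that $m \geq 2$. Dividing the right inequality by $a_n$ and the left inequality by $a_{m-1}$ yields the two-sided bound
$$ x \leq \frac{a_m}{a_n} < \frac{a_m}{a_{m-1}} \cdot x. $$

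Next I would let $n \to \infty$. Because $a_m \geq x\, a_n \to \infty$, the index $m = m(n)$ also tends to infinity, so the hypothesis $\lim_{k\to\infty} a_{k+1}/a_k = 1$ forces $a_m/a_{m-1} \to 1$. Squeezing in the displayed inequality then gives $\lim_{n\to\infty} a_m/a_n = x$, so $x$ lies in the closure of $P/P$. As $x$ was an arbitrary positive real, this establishes density in $\Preals$.

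The routine care is confined to the edge cases: checking that $m \geq 2$ for all sufficiently large $n$ (so that $a_{m-1}$ is defined), and observing that the argument is uniform across $x < 1$, $x = 1$, and $x > 1$. The single place where the full strength of the hypothesis is used—and the step I expect to be the crux—is the implication $m(n) \to \infty \Rightarrow a_{m(n)}/a_{m(n)-1} \to 1$: it is essential that the ratio of consecutive terms converges to $1$ along the entire tail, not merely that it stays bounded, since a bounded-but-nonconvergent ratio would leave a fixed multiplicative gap capable of excluding some targets $x$.
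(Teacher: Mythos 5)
Your proof is correct and complete. One point of comparison worth noting: the paper itself contains no proof of this theorem---it is quoted as a known criterion, attributed to Narkiewicz and \v{S}al\'at with a citation to their 1984 paper---so your argument supplies what the paper omits. The argument you give (take $m(n)$ minimal with $a_m \geq x\,a_n$, derive the sandwich $x \leq a_m/a_n < (a_m/a_{m-1})\,x$, note $m(n) \to \infty$, and squeeze using the hypothesis on consecutive ratios) is the standard proof of this criterion, and you handle the genuine edge cases correctly: $m \geq 2$ holds for all large $n$ because $a_n \to \infty$, and $m(n) \to \infty$ follows from $a_{m(n)} \geq x\,a_n \to \infty$ together with monotonicity. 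Your closing remark is also on target---boundedness of $a_{n+1}/a_n$ alone would not suffice, as the example $a_n = 2^n$ (ratio constantly $2$, quotient set contained in the powers of $2$ and their reciprocals) shows.
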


As another example, one of the basic steps in the proof of Cobham's famous theorem \cite{Cobham:1969} is the
following observation:  if $P_k := \{ k^i \suchthat i \geq 0 \}$ is the set of powers of an integer $k \geq 2$, then $P_k/P_\ell$ is
dense in $\Preals$ if and only if $k$ and $\ell \geq 2$ are multiplicatively independent.  Also see 
\cite[Prop.~9]{Garcia&Poore&Selhorst-Jones&Simon:2011}.

Let $s_q(n)$ be the sum of the base-$q$ digits of $n$.
Madritsch and Stoll \cite{Madritsch&Stoll:2014} showed that if $P_1$ and $P_2$ are polynomials with integer coefficients, of distinct degrees, such that
$P_1(\Enn), P_2(\Enn) \subseteq \Enn$, then
the sequence of quotients $(s_q(P_1(n))/s_q(P_2(n)))_{n \geq 1}$ is
dense in $\Preals$.

Brown et al.~\cite{Brown&Dairyko&Garcia&Lutz&Someck:2014} proved that if we take $P = Q$ to be the set of integers whose base-$k$ representation starts with $1$, then $P/Q$ is dense in the positive reals if and only if $k \in \{2,3,4 \}$.

Recently, Athreya, Reznick, and Tyson \cite{Athreya&Reznick&Tyson:2019} solved Problem 1 for $P = Q = C$, the Cantor numbers (the natural numbers having no digit ``1'' in their base-3 representation).

 \subsection{Problem 2:  deciding if an integer is representable}

Let $S_1, S_2$ be sets of natural numbers and $L_1, L_2$ the corresponding sets of their canonical base-$b$ representations.   If $L_1$ and $L_2$ are both regular languages (that is, recognized by finite automata), then we can decide whether a given $N \in S_1/S_2$ in $O(N)$ time.   

To see this, 
build an automaton $M$ that accepts, in parallel, the base-$b$ representation of two natural numbers $(A,B)$ if 
$A = BN$, starting with the least significant
digits.  For this we only need $N$ states, to keep
track of the possible carries.     Now use the direct product construction to intersect $M$ with $L_1$ in the first component (corresponding to $A$) and $L_2$ in the second component
(corresponding to $B$), getting an automaton $M'$.
If some final state $M'$ is reachable from the start, then $N$
has a representation; otherwise it does not.   This gives
an algorithm running in $O(N)$ time to decide whether
$N \in S_1/S_2$.   (The implicit constant depends on the size of the finite automata recognizing $L_1$ and $L_2$.)

Of course, is not necessary to construct the entire automaton.  We can use a queue-based algorithm to do breadth-first search on the underlying directed graph of the automaton, implicitly.   If $N$ is representable, we can often find a representation $A/B$ in much less than $O(N)$ time.

\subsection{Problem 3:  size of the smallest representation}

Continuing the example of regular languages, if $N$ has a representation as $A/B$, then
$A = b^{O(N)}$.  This follows from the fact that the automaton $M'$ constructed there has $t$ states, so if $M'$ accepts an input, it must accept an input of length at most $t-1$.   The corresponding integer is then at most
$b^{t-1} - 1$, and $t= O(N)$.

\subsection{Problem 4:  characterizing representable integers}

In 1987, Loxton and van der Poorten \cite{Loxton&vanderPoorten:1987} considered the set $L$ of integers that can be represented in base $4$ using just the digits $0, 1, $ and $-1$.  They showed that every odd integer can be represented as the quotient of two elements of $L$.

Recall the definition of the Cantor numbers $C$ from Section~\ref{subsec11}.
The problem of completely characterizing the ratio set
$V = \Enn \, \cap \, C/C$ was proposed
by Richard Guy \cite[Section F31]{Guy:2004} and is still 
unsolved.     Let $$D = \{ N \suchthat \exists i \geq 1 
\text{ such that } N \equiv \modd{2\cdot 3^{i-1}} {3^i} \} = 
\{ 2, 5, 6, 8, 11, 14, 15, 17, 18, 20,  \ldots \}.$$
By considering the numerator and denominator modulo $3^i$,
it is easy to see that if $N \in D$, then
$N \not\in V$.   Let
$$E = \Enn \, \cap \, \bigcup_{i \geq 0}\ [(3/2)\cdot 3^i, 2 \cdot 3^i]
= \{ 2, 5, 6, 14, 15, 16, 17, 18, 41, 42, 43, 44, 45, 46, 47, \ldots \}.$$
By considering the first few bits in the base-$3$ representation of
numerator and denominator (or using the results in 
\cite{Athreya&Reznick&Tyson:2019}),  it is easy to see that
if $N \in E$, then $N \not\in V$.    It is tempting
to conjecture that $V = \Enn - (D \, \cup \, E)$, but this is false.
Using the algorithm given above for Problem 3,
Sajed Haque and the fourth author of this article found the
following examples of integers in
$F := \Enn - (V \, \cup \, D \, \cup \, E)$:
$$\{ 529, 592, 601, 616, 5368, 50281, 4072741, 4074361,
4088941, 4245688 \}.$$
We do not know if there are infinitely many such examples.  It seems at least possible that numbers of the form $621\cdot 3^{4k} - 20$ might all belong to $F$.

A related conjecture was made by Selfridge and Lacampagne
\cite[\S 7]{Loxton&vanderPoorten:1987}.
   If we let $B = \{ 1, 2, 4, 5, 7, 11, 13, 14, 16, 20, 22, \ldots \}$ be the set of natural numbers having no $0$ in their
balanced ternary representation, then they conjectured that every
$n \not\equiv \modd{0} {3}$ belongs to $B/B$.  However, we found the
counterexamples
$$ \{ 247, 277, 967, 977, 1211, 1219, 1895, 1937, 1951, 1961, 2183, 2191, 2911, 2921,
       3029, 3641, 3649\},$$
the first of which was also found by Coppersmith \cite[Section F31]{Guy:2004}.
It seems likely that there are infinitely many such counterexamples, but we have no proof.

For a different example, let
$U = \{ 2^{k+1} + i \suchthat 1 \leq i \leq 2^{k-1} \}$.   
\v{S}al\'at \cite{Salat:1969} observed that $U/U$ has lower density $1/4$ and
upper density $2/5$.

\subsection{Problem 5:  counting number of representations}
Consider $S = \{ 1,2,4,5,8,9,10, \ldots \}$, the set of integers that can
be written as the sum of two squares of natural numbers.  Then it follows
from Fermat's classical two-square theorem that $S/S = S$.  Hence every $N \in S/S$ has infinitely many representations of the
form $N = A/B$ with $A,B \in S$.

\subsection{Problem 6:  which rationals are representable?}

As an example, Sierpi\'nski observed \cite[p.~254]{Sierpinski:1988} that if we take
$P = Q = \{ \varphi(n) \suchthat n \geq 1 \}$, the range of Euler's totient function, then
$P/Q$ contains every positive rational number.

On the other hand, it is a nice exercise in elementary number theory
to show that every non-negative rational number belongs to $\Enn/T$, 
where $T = \{ (2^i - 1)2^j \suchthat i \geq 1, j \geq 0 \}$.   See
 \cite[Example 7]{Rowland&Shallit:2015}.
 
 Define ${\cal E} = \{ 0,3,5,6,\ldots \} = \{ n \in \Enn \suchthat t(n) = 0 \}$ and  ${\cal O} = \{1,2,4,7,\ldots \} = \{ n \in \Enn \suchthat t(n) = 1 \}$, where $t$ is the Thue-Morse
 sequence.  
Stoll \cite{Stoll:2015} showed that for odd natural numbers $p > q$ there are integers
$n_1, n_2 < p$ such that $t(n_1p), t(n_1q) \in 
{\cal E}$,
and $t(n_2p), t(n_2q) \in {\cal O}$.
Since $t(2n) = t(n)$,
we immediately get that
${\cal E}/{\cal E}$ and ${\cal O}/{\cal O}$ both contain all positive rational numbers.

\section{Palindromic and antipalindromic numbers}

Now that we have motivated the study of the properties of $P/Q$ for sets $P, Q$, we turn to considering
Problems 1--6 above for $P = Q$ in a novel context:    the palindromic and antipalindromic numbers.  These two
classes have previously been studied by number theorists; see, e.g.,
\cite{Banks:2016,Cilleruelo&Luca&Baxter:2017,Rajasekaran&Shallit&Smith:2020}. 

We say that a natural number is {\it palindromic} if its base-$b$
representation is a palindrome (reads the same forwards and backwards).  For base $2$, the palindromic numbers ${\tt PAL} = \{ 1,3,5,7,9,15,17,\ldots\}$ form sequence \seqnum{A006995} in the {\it On-Line Encyclopedia of Integer Sequences} (OEIS).

Analogously, we say that a natural number
is {\it antipalindromic} if its base-$2$
representation is of even length, and the
second half is the reverse complement of the
first half.  For example, $52$ (which is
$110100$ in binary) is antipalindromic.  The
antipalindromic numbers ${\tt APAL} = \{ 2, 10, 12, 38, 42, 52, 56, \ldots \}$
form sequence \seqnum{A035928} in the OEIS. 
This can be generalized to base $b$ by demanding that if $a$ is a digit
in the first half of a number's representation, and $a'$ is the corresponding digit in the reverse of the
second half, then $a+a' = b-1$.

As it turns out, the study of the palindromic and antipalindromic numbers is particularly amenable to tools from automata theory and formal languages.
These tools have previously been used to solve other kinds of 
number theory problems (see, e.g., \cite{Rajasekaran&Shallit&Smith:2020}).

Our principal interest in this paper is base $2$, although nearly everything we say can be generalized to other bases.    We let $\qpal = \Enn \, \cap \, {\tt PAL}/{\tt PAL}$, the integers representable as quotients of palindromic numbers, and
$\qapal = \Enn \, \cap \, {\tt APAL}/{\tt APAL}$, the integers representable as quotients of antipalindromic numbers.

Throughout the paper we must distinguish between an integer and its
base-$k$ representation.   For $n \geq 1$, define $(n)_k$ to be the string of digits
representing $n$ in base $k$, starting with the most significant digit,
which must be nonzero.   If $w$ is a string of digits over the alphabet
$\Sigma_k = \{ 0, 1, \ldots, k-1 \}$, then by $[w]_k$ we mean the
integer represented by $w$ in base $k$.  Thus, for example,
$(43)_2 = 101011$ and $[101011]_2 = 43$.

For a string $x$, by $x^n$ we mean the string
$\overbrace{xx\cdots x}^n$.   In some cases (for example, an equality such as $1^4 = 1111$) there
could be ambiguity between this notation and the ordinary notation for powers
of integers, but the context should make it clear which interpretation is
meant.

We use the notation $\overline{a}$ to denote the binary complement of
the bit $a$:   $\overline{0} = 1$ and $\overline{1} = 0$.  This can be
extended to strings $w$ in the obvious way.   Another extension is that
if we are working over base $b$, then we can define $\overline{a} = b-1-a$.
Here the choice of $b$ should be clear from the context.

The {\it Hamming distance\/} $h(x,w)$ between two identical-length strings,
$x$ and $w$, is defined to be the number of positions on which $x$ and $w$ differ.

\subsection{Denseness}

\begin{theorem}
The ratio set $\pal/\pal$ is dense in the positive reals.
\label{one}
\end{theorem}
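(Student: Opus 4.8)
The plan is to apply the criterion of Theorem~\ref{nark} directly to $P = \pal$. Since $\pal/\pal$ is precisely the ratio set $P/P$, it suffices to enumerate the palindromic numbers in increasing order as $a_1 < a_2 < \cdots$ and to prove that $\lim_{n\to\infty} a_{n+1}/a_n = 1$; equivalently, I would show that the gap between a palindrome and its successor is negligible compared to the palindrome's own magnitude.

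First I would control the gaps among palindromes of a fixed binary length. A palindrome of length $\ell$ is completely determined by its first $\ceil{\ell/2}$ bits (with leading bit $1$), and the map sending this ``first half'' to the value of the palindrome is strictly increasing. Hence two consecutive palindromes of the same length arise from incrementing the first half by $1$. I would then estimate the resulting change in value: the least significant free bit sits in position about $\ell/2$, so incrementing the half — even allowing for carry propagation — alters the value by $O(2^{\ell/2})$. Since every palindrome of length $\ell$ has value at least $2^{\ell-1}$, the relative gap is $O(2^{-\ell/2})$, which tends to $0$.

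Next I would handle the transition between consecutive lengths. The largest palindrome of length $\ell$ is $1^\ell$, with value $2^\ell - 1$, and the smallest palindrome of length $\ell+1$ is $1\,0^{\ell-1}\,1$, with value $2^\ell + 1$; their ratio $(2^\ell+1)/(2^\ell-1)$ also tends to $1$. Combining the two estimates shows that $a_{n+1}/a_n \to 1$, and Theorem~\ref{nark} then yields the claim.

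The routine-but-delicate part is the carry analysis in the within-length estimate: when the first half ends in a block of $1$s, incrementing it flips several bits at once, and one must verify that the reflected low-order bits do not conspire to produce a gap larger than $O(2^{\ell/2})$. For even $\ell = 2m$, a short telescoping of the geometric contributions of the flipped bits shows that the gap equals exactly $3\cdot 2^{j-1}$, where $j \le m$ is the position of the single bit that changes from $0$ to $1$; this is maximized at $j=m$, giving $3\cdot 2^{m-1} = O(2^{\ell/2})$, and the odd case is entirely analogous. This confirms that no bad configuration arises, completing the reduction to Theorem~\ref{nark}.
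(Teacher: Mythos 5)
Your proof is correct, but it takes the alternative route that the paper itself only mentions in passing (in the remark immediately following Theorem~\ref{one}): rather than the paper's direct construction, you verify the Narkiewicz--{\v{S}al\'at} criterion of Theorem~\ref{nark}. The paper approximates a target $\alpha$ explicitly: it scales to $\beta = 2^k\alpha \in (1/2,1]$, truncates the binary expansion of $\beta$ to $n$ bits, reflects to form a palindrome $A$, divides by the palindrome $B = 2^{2n+k}+1$, and then does the error analysis showing $|A/B - \alpha| = O(2^{-n-k})$. You instead enumerate the palindromes $a_1 < a_2 < \cdots$ and show $a_{n+1}/a_n \to 1$ via two estimates: within a fixed length $\ell$, consecutive palindromes come from incrementing the first half, and the gap is $O(2^{\ell/2})$ against values $\geq 2^{\ell-1}$; across lengths, $(2^\ell+1)/(2^\ell-1) \to 1$. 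Your exact gap formula checks out: for even $\ell = 2m$, incrementing a half with $k$ trailing ones changes the value by exactly $3\cdot 2^{m-k-1}$ (e.g., for $\ell=6$ the palindromes $33,45,51,63$ have gaps $12,6,12$), and in the odd case the gap is $2^m$ or $3\cdot 2^{m-k}$ --- slightly different constants than the even case, but the same $O(2^{\ell/2})$ bound, so your ``entirely analogous'' claim is justified. What your route buys: it is shorter and more structural once Theorem~\ref{nark} is taken as a black box, and it exposes the precise spacing of palindromic numbers. What the paper's route buys: explicit approximant pairs $(A,B)$ with a quantitative convergence rate, and --- decisively for this paper --- a method that transfers to the antipalindromic case (Theorem~\ref{thm14}), where your route is unavailable: as the paper notes in the remark after Theorem~\ref{thm14}, the antipalindromic numbers satisfy $\limsup_{n\to\infty} a_{n+1}/a_n = 2$, so the criterion of Theorem~\ref{nark} cannot be applied to them.
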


\begin{proof}
Let $\alpha > 0$ be a real number that we want to approximate as
the quotient of two palindromic natural numbers.  Without loss
of generality, we can assume $\alpha \leq 1$ (otherwise, we represent
the reciprocal $1/\alpha$).  Let $k \geq 0$ be an integer
such that ${1 \over 2} < 2^k \alpha \leq 1$, and set
$\beta = 2^k \alpha$.

We now approximate $\beta$ by forming a palindrome from the
first $n$ bits of the binary expansion of $\beta$ (duplicating
the bits, then reversing and appending them), and dividing by 
the palindromic number $B = 2^{2n+k} + 1$.  More formally, let
$\gamma = \lfloor 2^n \beta \rfloor$, and
define $A = [(\gamma)_2 \, (\gamma)_2^R]_2$.  Then $A/B \approx \alpha$, and it remains to see how good this approximation is.

Clearly $\gamma \leq A/2^n < \gamma+1$.  Therefore
$$ 2^n \beta -1 < \lfloor 2^n \beta \rfloor = \gamma \leq A/2^n < \gamma+ 1 = \lfloor 2^n \beta \rfloor + 1 \leq 2^n \beta + 1.$$
Multiplying through by $2^n/B$ gives
$$ {{2^{2n}\beta - 2^n} \over{2^{2n+k} + 1}} < {A \over B} <
{{2^{2n} \beta + 2^n} \over {2^{2n+k} + 1}} ,$$
or equivalently,
\begin{equation}
 {{ \beta - 2^{-n}} \over {2^k + 2^{-2n}}} < {A \over B } < 
{{\beta + 2^{-n}} \over {2^k + 2^{-2n}}} < {\beta \over 2^k} + 2^{-n-k} .
\label{app}
\end{equation}
Now
\begin{align*}
{{\beta - 2^{-n}} \over {2^k + 2^{-2n}}} &= {{\beta - 2^{-n}} \over
{2^k}} \left( {1 \over {1 + 2^{-2n-k}}} \right)  \\
&> {{\beta - 2^{-n} - 2^{-2n-k} \beta + 2^{-3n-k}} \over
{2^k}} \\
&> {{\beta - 2^{-n} - 2^{-2n-k}} \over {2^k}},
\end{align*}
where we have used the fact that $\beta < 1$ and the estimate
$1/(1+x) > 1-x$.  Substituting in Eq.~\eqref{app}, we see that
$$ \alpha - 2^{-n-k} - 2^{-2n-2k} < {A \over B} < \alpha + 2^{-n-k}.$$
Hence, as $n \rightarrow \infty$, the quotient of palindromes
$A/B$ gets as close as desired to $\alpha$.
\end{proof}

\begin{remark}
We could have also proved Theorem~\ref{one} using the criterion in
Theorem~\ref{nark}.
\end{remark}

\subsection{Testing if $N$ is the quotient of palindromic numbers}

We now turn to the question of deciding,
given a natural number $N$, whether there exist palindromes
$A, B$ such that $N = A/B$.    Since a positive number must be odd for its base-$2$
representation to be a palindrome, it is clear that
only odd integers are representable.

The set $\qpal$
$$ 1,3,5,7,9,11,13,15,17,19,21,27,31,33,39,\ldots$$
of positive integers having such a representation
is sequence \seqnum{A305468} in the OEIS.

The sequence  
$$ 23,25,29,35,37,41,47,49,59, \ldots $$
of odd positive integers having no representation
as the quotient of palindromic numbers
is sequence \seqnum{A305469} in the OEIS.

Evidently, if there exist such $A, B$ we can find one through a brute-force search, so for the moment we focus on how we might establish that there is no such solution.   We describe three algorithms:  a heuristic algorithm that does not always terminate; a rigorous algorithm based on context-free languages; and
finally, a fast rigorous algorithm based on deterministic finite automata.

\subsubsection{A heuristic algorithm}
\label{heuristic}

There is a fast and relatively simple heuristic method to solve this problem that works in many cases, but
is not guaranteed to terminate.  If it does terminate, the
answer it gives is guaranteed to be correct.   We describe it now.
Suppose we are considering a candidate $T$ for the first $k$ bits of $B$.
Since $A = BN$, these
$k$ bits of $B$ determine all the possibilities for the first $k$ bits of $A$.

On the other hand, the first $k$ bits of $B$
determine 
the last $k$ bits of $B$.
By considering the equation $A = BN$ modulo $2^k$, the last $k$ bits of $A$ are also completely determined.  Hence the first $k$ bits of $A$ are completely determined, and must match one of the possibilities in the preceding paragraph.  If they do not, we have
ruled out $T$ as the possibility for the first
$k$ bits of $A$.  

We now do a breadth-first search over the tree of possible prefixes of $B$.  The hope is that we either find a solution, or are able to rule out all possibilities for the solution of $A/B = N$.
This will be the case if the following heuristic principle holds:
\begin{hprinciple}
If there is no solution in palindromes $A, B$ to the equation $A/B = N$, then this fact can be proved by examining all possible $k$-bit prefixes of $B$ for some fixed integer $k$ (which might depend on $N$) .
\end{hprinciple}

We illustrate the basic idea for $N = 35$.  Suppose $A,B$ are palindromes with $A/B = 35$.
Then the first three bits of $B$ are either $100, 101, 110, 111$. 

Let's assume the first three bits
are $100$.  Then, since $A = 35B$, we see
that the first three bits of $A$ are
either $100$ or $101$.  On the other hand
the last three bits of $B$ are $001$, so
from $A = 35B$ we see the last three bits
of $A$ are $011$.  So the first three bits
of $A$ are $110$, contradicting what we
found earlier.

Similar contradictions occur for the other
three possibilities, so we have proved that there is no solution in palindromic numbers
to the equation $A/B = 35$.

Using our heuristic algorithm, we were able to determine the representability of all odd $N \leq 2000$.   The data for $N \leq 239$ is given in
Table~\ref{tabone}.  Here $k$ denotes the length of the largest bit strings
that were needed to prove that $N = A/B$ has no solutions in palindromic
numbers.  
\begin{table}[H]
\begin{center}
\begin{tabular}{c|c|c|c||c|c|c|c||c|c|c|c}
$N$ & $A$ & $B$ & $k$ & $N$ & $A$ & $B$ & $k$ & $N$ & $A$ & $B$ & $k$  \\
\hline
1 & 1 & 1 &                       & 81 & --- & --- & 3                & 161 & --- & --- & 3              \\
3 & 3 & 1 &                       & 83 & 3735 & 45 &                  & 163 & 7335 & 45 &                \\
5 & 5 & 1 &                       & 85 & 85 & 1 &                     & 165 & 165 & 1 &                  \\
7 & 7 & 1 &                       & 87 & --- & --- & 4                & 167 & --- & --- & 5              \\
9 & 9 & 1 &                       & 89 & --- & --- & 3                & 169 & --- & --- & 3              \\
11 & 33 & 3 &                     & 91 & 273 & 3 &                    & 171 & 513 & 3 &                  \\
13 & 65 & 5 &                     & 93 & 93 & 1 &                     & 173 & 5709 & 33 &                \\
15 & 15 & 1 &                     & 95 & 2565 & 27 &                  & 175 & --- & --- & 8              \\
17 & 17 & 1 &                     & 97 & --- & --- & 3                & 177 & --- & --- & 3              \\
19 & 513 & 27 &                   & 99 & 99 & 1 &                     & 179 & 11277 & 63 &               \\
21 & 21 & 1 &                     & 101 & --- & --- & 5               & 181 & 16833 & 93 &               \\
23 & --- & --- & 4                & 103 & --- & --- & 7               & 183 & --- & --- & 4              \\
25 & --- & --- & 3                & 105 & --- & --- & 6               & 185 & --- & --- & 3              \\
27 & 27 & 1 &                     & 107 & 107 & 1 &                   & 187 & 561 & 3 &                  \\
29 & --- & --- & 8                & 109 & 2289 & 21 &                 & 189 & 189 & 1 &                  \\
31 & 31 & 1 &                     & 111 & --- & --- & 6               & 191 & 29223 & 153 &              \\
33 & 33 & 1 &                     & 113 & --- & --- & 4               & 193 & --- & --- & 3              \\
35 & --- & --- & 3                & 115 & --- & --- & 3               & 195 & 195 & 1 &                  \\
37 & --- & --- & 6                & 117 & 585 & 5 &                   & 197 & --- & --- & 6              \\
39 & 195 & 5 &                    & 119 & 119 & 1 &                   & 199 & --- & --- & 9              \\
41 & --- & --- & 3                & 121 & 11253 & 93 &                & 201 & --- & --- & 3              \\
43 & 129 & 3 &                    & 123 & --- & --- & 3               & 203 & 1421 & 7 &                 \\
45 & 45 & 1 &                     & 125 & --- & --- & 5               & 205 & 1025 & 5 &                 \\
47 & --- & --- & 6                & 127 & 127 & 1 &                   & 207 & --- & --- & 6              \\
49 & --- & --- & 3                & 129 & 129 & 1 &                   & 209 & --- & --- & 4              \\
51 & 51 & 1 &                     & 131 & --- & --- & 3               & 211 & 633 & 3 &                  \\
53 & 3339 & 63 &                  & 133 & 3591 & 27 &                 & 213 & 54315 & 255 &              \\
55 & 165 & 3 &                    & 135 & --- & --- & 4               & 215 & 645 & 3 &                  \\
57 & 513 & 9 &                    & 137 & --- & --- & 8               & 217 & --- & --- & 8              \\
59 & --- & --- & 3                & 139 & --- & --- & 3               & 219 & 219 & 1 &                  \\
61 & 427 & 7 &                    & 141 & --- & --- & 6               & 221 & 1105 & 5 &                 \\
63 & 63 & 1 &                     & 143 & 2145 & 15 &                 & 223 & 2965677 & 13299 &          \\
65 & 65 & 1 &                     & 145 & --- & --- & 4               & 225 & --- & --- & 4              \\
67 & --- & --- & 3                & 147 & --- & --- & 6               & 227 & --- & --- & 3              \\
69 & --- & --- & 7                & 149 & 5887437 & 39513 &           & 229 & 3435 & 15 &                \\
71 & 54315 & 765 &                & 151 & 1057 & 7 &                  & 231 & 231 & 1 &                  \\
73 & 73 & 1 &                     & 153 & 153 & 1 &                   & 233 & 59415 & 255 &              \\
75 & --- & --- & 4                & 155 & --- & --- & 7               & 235 & --- & --- & 3              \\
77 & 231 & 3 &                    & 157 & 471 & 3 &                   & 237 & --- & --- & 6              \\
79 & 888987 & 11253 &             & 159 & 3339 & 21 &                 & 239 & 717 & 3 &                 
\end{tabular}
\end{center}
\caption{Results of the heuristic algorithm for odd $N\leq 239$}
\label{tabone}
\end{table}

Unfortunately, the heuristic principle does not hold in all cases.     We found six examples less than
$20000$ for which a failure to terminate occurs.   They are summarized
in Table~\ref{exceptions}.  For each entry we have
$N \cdot [ r s^n (s^R)^n r^R ]_2 =
[t u^{n-i} v w v^R (u^R)^{n-i} t^R]_2$ for $n \geq 2$,
and furthermore $\pald(w) = d$.   Here $\pald(w) = h(w,w^R)$, the Hamming distance between $w$ and $w^R$.
For these numbers there is an infinite sequence $(f(n))$ of palindromic numbers whose product with $N$ is ``almost" palindromic, and furthermore the first bit position where this product differs from being a palindrome is located arbitrarily far in (and hence will
never be detected by an algorithm that focuses only on fixed-size prefixes).

\begin{table}[H]
\begin{center}
\resizebox{\textwidth}{!}{%
\begin{tabular}{ccccccccc}
$N$ & $r$ & $s$ & $t$ & $u$ & $v$ & $w$ & $i$ & $d$ \\
\hline
2551 & $\epsilon$& 10100010000 & 1100100 & 11110001011 
& 111 &  0010110001011 & 1 & 12 \\
14765 & $\epsilon$ & 111011110110 & 
1101011111000 & 110000010111 &  1100000101101 & 1011010110 & 2 & 8\\
15247 & $\epsilon$ & 11001101110011001000 & 
10111111100 & 00100011001101110011& 0010001 & 011100000011000101 & 1 & 10 \\
17093 & $\epsilon$ & 110111001000 & 11100110000 & 110010001101 & $\epsilon$ & 0110000000010101 & 1 & 6 \\
19277 & 11 & 0000100011100111110111000110 &
1110010010000101 & 1001101101001101100100101100 & 10011011010011 & 11001111100100 & 1 & 8 \\
19831 & $\epsilon$& 11101010111100 & 1000111000110 & 00100000011111 & 0010000001111 & 0111010111111010101 & 2 & 12
\end{tabular}
}
\end{center}
\caption{Some $N$ for which the heuristic principle fails.}
\label{exceptions}
\end{table}

Let's verify the claim for $N = 2551$.
For the given
$r,s,t,u,v,w$ we have
$$[s^n (s^R)^n]_2 = 1296\cdot 2^{11n} \cdot {{2^{11n}- 1} \over {2^{11} - 1}}
+ 69 \cdot {{2^{11n}- 1} \over {2^{11} - 1}} $$
while
\begin{align*}
&[t u^{n-1} v w v^R (u^R)^{n-1} t^R]_2 =
[t]_2 \cdot 2^{|u^{n-1} v w v^R (u^R)^{n-1} t^R|} +
[u]_2 \cdot 2^{|v w v^R (u^R)^{n-1} t^R|} {{2^{(n-1)|u|} - 1} \over {2^{|u|} - 1}}  \\
& +[v]_2 \cdot 2^{|w v^R (u^R)^{n-1} t^R|} +
[w]_2 \cdot 2^{|v^R (u^R)^{n-1} t^R|} +
[v^R]_2 \cdot 2^{|(u^R)^{n-1} t^R|} + 
[u^R]_2 \cdot 2^{|t^R|} {{2^{(n-1)|u^R|} - 1} \over {2^{|u^R|} - 1}} + [t^R] \\
&= 100 \cdot 2^{11(n-1)+3+13+3+11(n-1)+7} +
1931 \cdot 2^{3+13+3+11(n-1)+7} {{2^{11(n-1)} - 1} \over {2^{11} - 1}} + \\
&7 \cdot 2^{13+3+11(n-1) + 7} +
1419 \cdot 2^{3 + 11(n-1) +7} +
7  \cdot 2^{11(n-1) + 7} + 
1679 \cdot 2^7 {{2^{11(n-1)} - 1} \over {2^{11} - 1}} + 19.
\end{align*}
The expression for $[s^n (s^R)^n]_2$ simplifies to
$$ {{1296} \over {2^{11}-1}} \cdot 2^{22n} -
{{1227} \over {2^{11}-1}} \cdot 2^{11n} ,$$
while the expression for $[t u^{n-1} v w v^R (u^R)^{n-1} t^R]_2$ simplifies to 
$$ {{3306096} \over {2^{11}-1}} \cdot 2^{22n} -
{{3130077} \over {2^{11}-1}} \cdot 2^{11n}.$$
It is now easily verified that the second is 2551
times the first.

\begin{conjecture}
There are infinitely many natural numbers $N$ for which the heuristic
algorithm fails to terminate.
\end{conjecture}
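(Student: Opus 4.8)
The plan is to make precise the mechanism already visible in Table~\ref{exceptions} and then produce infinitely many instances of it. First I would reformulate what non-termination means. The breadth-first search maintains, at depth $k$, the set $S_k$ of length-$k$ prefixes $T$ of $B$ that survive the consistency check, and it halts exactly when either $S_k=\emptyset$ for some $k$ (reporting ``no solution'') or some surviving prefix completes to a genuine pair of palindromes $A,B$ with $A=NB$. Since the search visits every depth and is complete, a genuine representation is always eventually found; hence the algorithm fails to terminate on $N$ if and only if $N\notin\qpal$ \emph{and} $S_k\neq\emptyset$ for every $k$. The point of an ``almost-palindromic family'' is to force the second condition. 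If there are palindromes $B_n$ with $\abs{B_n}\to\infty$ such that $A_n=N\,B_n$ is a palindrome except in a bounded central block, and the depth of that block tends to infinity, then for each $k$ the length-$k$ prefix of a sufficiently large $B_n$ passes the check: $B_n$ is a genuine palindrome, and the first and last $k$ bits of $A_n$ really do agree under reversal once $k$ is smaller than the depth of the central discrepancy, so the forced first $k$ bits of $A_n$ coincide with an actual possibility. Thus $S_k\neq\emptyset$ for all $k$.

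The construction of such families is a coefficient-matching problem. Fixing $\abs{s}=\abs{u}=L$ and applying $[\cdot]_2$ to the two templates $r\,s^n (s^R)^n r^R$ and $t\,u^{n-i} v\,w\,v^R (u^R)^{n-i} t^R$, then summing the geometric series $\sum_j 2^{jL}=(2^{Ln}-1)/(2^L-1)$ exactly as in the worked case $N=2551$, turns each side into a $\Que$-linear combination of $2^{2Ln}$ and $2^{Ln}$ plus an $n$-independent term. The requirement
\[
N\cdot[\,r\,s^n (s^R)^n r^R\,]_2 = [\,t\,u^{n-i} v\,w\,v^R (u^R)^{n-i} t^R\,]_2
\qquad (n\ge n_0)
\]
then collapses to three equations: equality of the $2^{2Ln}$-coefficients, equality of the $2^{Ln}$-coefficients, and equality of the constant terms. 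I would solve this finite system with one free parameter — for instance by letting a suitably placed block inside $u$ or the prefix $t$ grow through an explicit infinite sequence of strings of matching length — so as to produce an infinite sequence of distinct odd integers $N$, each equipped with strings satisfying $w\neq w^R$, i.e.\ $\pald(w)=h(w,w^R)>0$, which is precisely what makes $A_n$ almost-palindromic rather than palindromic.

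The main obstacle is the companion condition $N\notin\qpal$. The witness family only rules out the specific palindromes $B_n$; it says nothing about other potential representations, so it does not by itself prevent the search from completing a genuine solution, and a single genuine representation would make the algorithm halt. I would therefore arrange the construction so that non-representability follows from an \emph{independent} obstruction rather than from the witness family itself: either a congruence obstruction modulo a fixed power of $2$ (using that $A$ and $B$ must both be odd and that the low-order bits of a palindrome are the reverses of its high-order bits, which constrains $N \bmod 2^m$), or the structural condition behind the positive-density result for unrepresentable integers announced in the abstract. Concretely, I would intersect the infinite family of candidate $N$ produced by coefficient-matching with an automatic (or arithmetic-progression) set of provably unrepresentable integers, and argue that the intersection is still infinite. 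Reconciling the algebraic shape forced by the matching identity with the modular shape forced by non-representability — showing these two infinite sets meet infinitely often — is where I expect the real difficulty to lie, and is presumably why the statement remains only a conjecture.

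As a hedge against solving the full system from scratch, I would also pursue a closure approach: starting from one of the six known exceptional values, such as $N=2551$, look for operations on the strings $r,s,t,u,v,w$ (inserting palindromic padding into $r,s$, or substituting length-preserving blocks into $u$) that preserve both the matching identity and the non-representability obstruction, thereby pumping a single example into infinitely many. This trades the algebraic search of the second paragraph for a verification that the chosen operations respect the conditions of the first and third paragraphs, but it faces the same essential hurdle of maintaining $N\notin\qpal$ along the family.
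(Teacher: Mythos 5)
The statement you are addressing is an open conjecture in the paper: the authors prove nothing here. Their evidence consists of six computed values of $N$ (Table~\ref{exceptions}), a verification for $N=2551$ that the exhibited palindromes $B_n$ have $N\cdot B_n$ palindromic except in a central block whose position drifts to infinity, and a separate automaton-based proof (Section~\ref{decision2}) that these six $N$ are indeed unrepresentable. Your proposal correctly reverse-engineers this mechanism---your reformulation of non-termination as ``$N\notin\qpal$ and $S_k\neq\emptyset$ for all $k$,'' and your reduction of the witness families to matching coefficients of $2^{Ln}$ and $2^{2Ln}$, agree with the paper's worked example---but it is a research plan rather than a proof, and both of its essential steps are left open. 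The first step, solving the coefficient-matching system with a free parameter so as to produce infinitely many distinct $N$, is only described, never carried out; the heterogeneity of the six known tuples $(r,s,t,u,v,w,i,d)$ suggests no one-parameter family is currently known.

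The second step contains a flaw that is worse than a gap: as stated, it is self-defeating. You propose to guarantee $N\notin\qpal$ by an ``independent obstruction''---a congruence modulo a fixed power of $2$ combined with leading-bit constraints, or membership in the paper's positive-density set of unrepresentable integers. But that set (Theorem~\ref{samin}: $N\equiv 1 \pmod 8$ with binary expansion beginning $101$) is proved unrepresentable by exactly the kind of argument the heuristic performs at depth $3$: enumerate the possible leading bits, transfer them to trailing bits via the palindrome property, and derive a contradiction modulo $8$. Consequently, every $N$ admitting such a finite-prefix obstruction is an $N$ on which the heuristic \emph{terminates}; your own condition $S_k\neq\emptyset$ for all $k$ says precisely that no obstruction of this type exists. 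So the intersection you hope is infinite is in fact empty, and any proof of the conjecture must certify $N\notin\qpal$ by a genuinely non-prefix argument---which the paper achieves for its six examples only one $N$ at a time, via the automaton decision procedure. That tension, which your plan does not resolve, is exactly why the statement remains a conjecture.
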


\subsubsection{A provable decision procedure}

In contrast to the fast method presented in
Section~\ref{heuristic}, in this section we describe another technique
that provides a provable decision procedure.  This method is
based on formal language theory.

Here is a brief sketch of the idea:  first, given $N$, we construct a pushdown
automaton (PDA) $M_N$ that, on input $A$ and $B$ expressed in binary, and read in parallel, determines if
$A$ and $B$ are both palindromes and if $A = BN$.  Next, we
convert $M_N$ to an equivalent context-free grammar (CFG) $G_N$.
Finally, we use a standard decision procedure for context-free
grammars to decide if $G_N$ generates any string, and if so,
to find the shortest string generated by $G_N$.

However, there are some complications.  While determining if $A$
is palindromic with a PDA is easy, making the same determination
for $A$ and $B$ simultaneously (when they are of different
lengths) is harder.  To align $A$ and $B$ around their center,
we multiply $B$ by $2^k$ for some appropriate power of $2$.  
Thus, instead
of checking whether $A = BN$, we are actually checking
if $2^k A = BN$.  Now there are four
separate cases to examine, depending on the parity of the length
of $(A)_2$ and $(B)_2$.

Our solution consists of five parts:

\begin{itemize}

\item
{\tt ConstructPDA($N$):}  on input a positive integer $N$, constructs four PDAs that accept the base-2 representation of all
$(A,B)$ in parallel such that $A = BN$ and both $A$ and $B$ are palindromes.   This PDA has $O(N^{3/2})$ states, where $O(N)$ states
are used to keep track of the multiplication by $N$, and an additional
multiplicative factor of $O(N^{1/2})$ states required to keep track
of the symbols required to ``line up'' the binary
representation of $A$ with $B$.

\item
{\tt CanonicalPDA($M$):}  on input $M$ returns a new PDA $M'$ that is in
Sipser normal form:  it has at most one final state,
empties the stack before accepting, and each transition either pushes exactly one symbol onto the stack or pops one off.   

\item {\tt PDA-to-Grammar($M$):}  takes a PDA $M$ in Sipser normal form and returns
an equivalent CFG $G$ using the algorithm in 
\cite{Sipser:2013}.    This blows up the number of states by
at most a cubic factor, so the size of the grammar is
$O(N^{9/2})$.

\item {\tt Remove-Useless-Symbols($G$):}  takes a CFG $G$ and removes useless
symbols (both variables and terminals) following the algorithm in
Hopcroft and Ullman \cite{Hopcroft&Ullman:1979}.  If nothing is left, we know $L(G)$ is the empty set.

\item {\tt Shortest-String-Generated($G$):}  given that the CFG $G$ generates at least
one string, this routine returns the shortest string (or perhaps strings) generated by $G$, 
using dynamic programming. 

\end{itemize}

Using these ideas we were able to prove
\begin{theorem}
There exists an algorithm to determine if $N$ can be written as the quotient of palindromic numbers that runs in $O(N^{9/2})$ time.
\end{theorem}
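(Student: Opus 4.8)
The plan is to flesh out the five-part pipeline sketched above, so that the whole argument reduces to two things: producing a pushdown automaton $M_N$ of the advertised size that recognizes exactly the valid pairs $(A,B)$, and then bounding the cost of each downstream transformation. First I would record a trivial reduction. Every positive base-$2$ palindrome begins and ends with the digit $1$ and is therefore odd, so $A = BN$ with $A,B$ palindromic forces $N$ to be odd; for even $N$ the algorithm answers ``no'' at once, and we may assume $N$ is odd. The rest of the time budget is spent on the construction and analysis of the automata.

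The heart of the proof is {\tt ConstructPDA}. The difficulty is that a PDA has only one stack, yet we must certify three facts at once: that $A$ is a palindrome, that $B$ is a palindrome, and that $A = BN$. The arithmetic relation is regular, exactly as in the $O(N)$-time automaton of the Problem~2 discussion: reading the digits of $A$ and $B$ in parallel from the least-significant end and tracking the carry requires only one state for each possible carry value $0 \le c < N$, so this part lives entirely in the finite control. The two palindrome conditions are where the stack is needed, and the key observation is that they merge into one. After multiplying $B$ by an appropriate power $2^k$ so that the centers of the representations of $A$ and $2^k B$ coincide, reading $A$ and $2^kB$ in parallel as a single word over the product alphabet $\{0,1\} \times \{0,1\}$ turns the requirement ``both coordinates are palindromes about the common center'' into the single requirement that this product-word is a palindrome. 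A palindrome over a fixed finite alphabet is checkable by one nondeterministic stack: push the first half, guess the center, then pop and match the second half. The shift $k$ needed to center $B$ against $A$ is $\Theta(\log N)$, being governed by their bit-length difference, and the construction must buffer roughly half of these alignment symbols while it locates the center; this contributes the multiplicative factor of $2^{(1/2)\log_2 N} = O(N^{1/2})$ states. Combined with the $O(N)$ carry states, $M_N$ has $O(N^{3/2})$ states.

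Because odd- and even-length palindromes behave differently at the center (an odd-length palindrome carries an unmatched central digit), and because the parities of $|(A)_2|$ and $|(B)_2|$ independently affect how the centers line up, I would build four automata, one per parity combination, and declare $N$ representable iff at least one of the four accepts some input. Each automaton is then passed through {\tt CanonicalPDA} (Sipser normal form, a polynomial normalization), {\tt PDA-to-Grammar} (the standard triple construction, producing $O(|Q|^2)$ variables and $O(|Q|^3)$ productions and hence a grammar of size $O((N^{3/2})^3) = O(N^{9/2})$), and finally {\tt Remove-Useless-Symbols}, whose Hopcroft--Ullman implementation runs in time linear in the grammar size. The language is empty precisely when the start symbol is eliminated, which already decides representability; running {\tt Shortest-String-Generated} by dynamic programming on the surviving grammar recovers an explicit witness $A/B$ when one exists, again in time polynomial in the grammar size and so within budget. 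Summing over the four constant-many cases, the dominant term is the $O(N^{9/2})$ grammar manipulation, giving the claimed bound.

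I expect the only genuinely delicate step to be {\tt ConstructPDA}, namely the simultaneous verification that $A$ and $B$ are palindromes. The product-alphabet trick collapses the two palindrome tests into one, but it is valid only once the centers are aligned, and carrying out that alignment correctly in all four parity cases --- while keeping the extra overhead down to $O(N^{1/2})$ rather than $O(N)$ states --- is the crux of the argument. Everything downstream is bookkeeping on standard, well-understood transformations whose polynomial costs I would simply cite.
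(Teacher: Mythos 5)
Your proposal is correct and takes essentially the same approach as the paper: the identical five-procedure pipeline, the same decomposition of the PDA's state count into $O(N)$ carry states times an $O(N^{1/2})$ alignment buffer (giving $O(N^{3/2})$ states), the same four parity cases for $|(A)_2|$ and $|(B)_2|$, and the same cubic blowup in the PDA-to-grammar conversion yielding the $O(N^{9/2})$ bound. The extra detail you supply --- the product-alphabet trick for checking both palindromes with one stack once the centers are aligned, and the emptiness test via useless-symbol removal --- is exactly the content the paper leaves implicit in its sketch.
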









This method was programmed up by the first author in 2019, and with it we
were able to determine the solvability of $N = A/B$ in palindromes
for all odd numbers $\leq 600$.   Unfortunately, it was too slow to resolve the cases we were interested in (such as $N = 2551$, which the heuristic algorithm could not solve), so we turned to another method described in the next section.

\subsubsection{A different provable decision procedure based on finite automata}
\label{decision2}

We developed another method that is based on finite automata (instead of pushdown automata).   Of course, finite automata cannot recognize palindromes, so
we have to be a bit more clever.

Let $N$ and $k$ be integers.   The case of the representability
of $N = A/B$ with $A, B$ palindromes in base $k$ is easy to decide
in the cases where $N < k$ or $k \mid N$, so we assume neither of
these holds.

We construct a nondeterministic finite automaton $M_{N, k}$ to check whether $N$ can be expressed as the quotient of palindromic numbers in base $k$.
This automaton accepts certain pairs of strings $a$ and $b$, from which we derive integers $A$ and $B$, where $(A)_k$ and $(B)_k$ are palindromes and $A/B = N$.
This is accomplished by interpreting the input each $a$ and $b$ as half of a palindrome $(A)_k$ and $(B)_k$, respectively, and then verifying the equation $A = BN$.
The automaton verifies the equation from both the left-hand and right-hand halves of the digits of $(A)_k$ and $(B)_k$ simultaneously.
From the size of the constructed $M_{N, k}$ we can also obtain a bound on the maximum size of $A$. 

\subsubsection*{Verifying a multiplication with a system of equations}
To verify the equation $A = B \cdot N$ we will compare $N \cdot B$ to $A$ digit by digit.
Let $(A)_k = A_i A_{i-1} \cdots A_1$ and $(B)_k = B_j B_{j - 1} \cdots B_1$.
We begin by checking $$A_1 = (N \cdot B_1) \bmod k.$$
This leaves a carry to contribute to the next equation $$c_1 = \frac{N \cdot B_1 - A_1}{k}.$$
We call these $c_\ell$ values the \textit{carries}.
We then subsequently verify each equation
$$A_\ell = (N \cdot B_\ell + c_{\ell - 1}) \bmod k$$
for $\ell \in \{2, 3, \ldots, \abs{(A)_k}\}$.
When $\ell > \abs{(B)_k}$ we continue with $B_\ell = 0$.
At each step we get a new equation
$$c_\ell = \frac{N \cdot B_\ell + c_{\ell - 1} - A_\ell}{k}$$
for the next step.
If at the end of the process we have that $c_i = 0$, then all the equations are valid and indeed $A = N \cdot B$.

We can also obtain a bound on the size of $c_\ell$. This contributes to the bound on the size of $M_{N, k}$.
We have $$c_\ell \leq \frac{(k-1) \cdot N + c_{\ell - 1} - 0}{k}.$$
Since the carry starts at 0, $c_1$ includes $c_0 = 0$ so we have that $$c_1 \leq \frac{k-1}{k} \cdot N < N.$$
We can then assume for the sake of induction that $c_{\ell - 1} < N$ and get that
\begin{align*}
c_\ell &= \frac{N \cdot B_\ell + c_{\ell - 1} - A_\ell}{k}\\
&\leq \frac{(k-1) \cdot N + c_{\ell-1} - 0}{k}\\
&< \frac{(k-1) \cdot N + N}{k}\\
&< \frac{k \cdot N - N + N}{k}\\
&< \frac{k \cdot N}{k}\\
&< N,
\end{align*}
as $A_\ell \geq 0$ and $B_\ell \leq k-1$.
We can use the fact that $c_\ell$ is bounded by $N$ to constrain the states we have to consider in $M_{N, k}$.
Any state with a left carry larger than or equal to $N$ cannot lead to an accepting state, so we can safely omit it.

The automaton $M_{N,k}$ simultaneously checks the equations starting with $\ell = 1$ in ascending order and the equations starting with $\ell = i$ in descending order.
The ascending equations have a carry computed as previously described.
The descending equations start with the assumption that $c_i = 0$ and compute the required preceding carry value that would result in the equation being satisfied.
We compute $c_{\ell-1}$ from the relation
$$c_{\ell-1} = k \cdot c_\ell - N \cdot B_\ell + A_\ell.$$
The states of the automaton keep track of the value of the largest index carry computed from the right and the smallest index carry computed from the left.
An accepting state is one where the top and bottom carries are equal. This implies that each equation is satisfied from $\ell = 1$ up to $\ell = i$.

\subsubsection*{Palindromes as input to an automaton}

There are two main challenges regarding the input specification when trying to design an automaton that verifies an equation and ensures that the inputs are palindromes.
The first challenge is that it is impossible to recognize a palindrome with a finite automaton. 
To remedy this issue we take, as input, {\it half\/} of a palindrome and implicitly determine the other half.
A naive approach is to interpret the input pair $\langle a,b \rangle$ as referring to the equation $[aa^R]_k = [bb^R]_k \cdot N$.

This means all even-length palindromes have an associated string that is a valid input to our automaton.
However, this does not cover the case of odd-length palindromes.
Therefore, on input $\langle a, b \rangle$, the automaton $M_{N, k}$ simultaneously checks each equation $[a\sigma_a a^R]_k = [b\sigma_b b^R]_k \cdot N$ where $\sigma_a, \sigma_b \in \{\epsilon\} \cup \Sigma_k$. If any of the equations are valid, then the automaton accepts the input.

The second challenge is that the strings $(A)_k$ and $(B)_k$ have, in general, different lengths.
Furthermore, the difference in length between them could be either the floor or the ceiling of $\log_k N$.
To accommodate both possibilities, $M_{N, k}$ begins by nondeterministically guessing the difference in length between $(A)_k$ and $(B)_k$.
Since $\abs{(A)_k} > \abs{(B)_k}$, it follows that $\abs{a} \geq \abs{b}$.
If $\abs{(N)_k} = 2$, then it is possible that there is a satisfying $a$ and $b$ where $\abs{a} = \abs{b}$.
However, in general we need to pad $b$ to provide it as input to the automaton simultaneously with $a$.
We use $X$ as a padding character to indicate the end of input for $b$.
We format the input $a$ and $b$ as $\langle a, b \rangle \in \big(\Sigma_k \times (\Sigma_k \cup \{X\}) \big)^*$.
The automaton $M_{N, k}$ rejects any input not of the form $\langle a, b \rangle = xy$ where $x \in (\Sigma_k \times \Sigma_k)^*$ and $y \in (\Sigma_k \times \{X\})^*$.
Additionally, the automaton rejects an input that begins with either $a[1]$ or $b[1]$ being zero.
This would result in a palindrome representing a number that would not be a palindrome in canonical representation.

\subsubsection*{Checking equations for the first component of the input}

This section describes the states that read the component of the input composed of symbols in $\Sigma_k \times \Sigma_k$.
The automaton is able to directly check the equations and compute the carries for the right-hand side, since each input from $\Sigma_k \times \Sigma_k$ contains all the information for one set of equations.
The first symbol of $\langle a,b \rangle$ is $(a[1], b[1])$.
Since $A_1 = a[1]$ and $B_1 = b[1]$, $(a[1], b[1])$ has all the information required for the equations
$$A_1 = (N \cdot B_1) \bmod k$$ and $$c_1 = \frac{N \cdot B_1 - A_1}{k}.$$
Afterwards, the automaton saves the carry for the next equation.
On receiving each input $(a[\ell], b[\ell]) = (A_\ell, B_\ell)$, the automaton is able to check the equation
$$A_\ell = (N \cdot B_\ell + c_{\ell-1}) \bmod k$$ and compute $$c_\ell = \frac{N \cdot B_\ell + c_{\ell-1} - A_\ell}{k}.$$
Therefore, the only information that $M_{N, k}$ must
preserve between states in order to verify these equations is the current value of the carry.
We call this saved value the \textit{right carry}.

The left-hand side requires more careful handling.
The automaton does not verify the equations on the left side, instead it asserts that they will be valid and computes the carry required from the right to satisfy the current step.
Since $(A)_k$ is a palindrome in canonical notation and there is a difference in length between it and $(B)_k$, we must have $A_i = a[1]$ and $B_i = 0$.
Using $c_i = 0$ from the assumption of satisfaction, $M_{N, k}$ computes $c_{i-1}$ with the equation
$$c_{i-1} = k \cdot c_i - N \cdot B_i + A_i = k \cdot 0 - N \cdot 0 + A_i = A_i.$$
The automaton preserves the carry for the next equation and we call this saved value the \textit{left carry}.
The automaton proceeds with calculating $c_{\ell-1}$ with $A_{\ell} = a[i-\ell+1]$, $B_\ell = 0$, and $c_\ell$ from the previous step with the equation
$$c_{\ell-1} = k \cdot c_\ell - N \cdot B_\ell + A_\ell = k \cdot c_\ell - N \cdot 0 + A_\ell = k \cdot c_\ell + A_\ell.$$
The equation using $a[\ell]$ to compute the left carry is computed concurrently with the corresponding equation on the right-hand side to compute the right carry that also uses $a[\ell]$. (This event is upon reading $(a[\ell], b[\ell])$).
We call this the \textit{loading phase}.

The left-hand side continues as described until trying to compute $c_{j-1}$.
At this step, $M_{N, k}$ needs $B_j = b[1]$ along with $a[i-j+1]$ to compute
$$c_{j-1} = k \cdot c_j - N \cdot B_j + A_j.$$
In order to compute an equation requiring information contained in different input symbols the automaton saves some additional information beyond the two carries. After the first input symbol $(a[1], b[1])$ is read and the right and left carries are computed, $M_{N, k}$ preserves $b[1]$.
The automaton keeps $b[1]$ until it has to compute $c_{j-1}$ and at that point discards it as it won't need it for any other calculations.
Similarly, to compute $c_{j-2}$, $M_{N, k}$ needs $b[2]$ which gets preserved after reading $(a[2], b[2])$ and discarded at the step computing $c_{j-2}$.
Each time that an $(a[\ell], b[\ell])$ is read the $b[\ell]$ must be saved for a later equation.
This process of using, discarding, and then subsequently replacing a saved symbol continues while the input symbols are of the form $(a[\ell], b[\ell])$. (This means this phase continues until we've seen all of $b$.)
We call the section of computation where $M_{N, k}$ consumes and discards saved symbols while still saving new ones the \textit{shifting phase}.

The number of symbols of $b$ that need to be saved is the difference in length between $(A)_k$ and $(B)_k$.
As stated previously, this difference can vary between the floor and ceiling of $\log_k N$.
To accommodate both possibilities, $M_{N, k}$ nondeterministically assumes that the difference is a fixed value $m$ and the loading phase saves that many symbols of $b$ before starting to consume and replace them in the shifting phase.
We call the currently saved section of $b$ the \textit{queue of saved symbols}.

Each state of $M_{N, k}$ is therefore identified by the $\leq m$ symbols saved, the integer $m$ itself, the left and right carries, and what phase the automaton is in.
The automaton also has a special start state, with an $\epsilon$ transition to the two states with no symbols saved, left and right carries set to 0, and each possibility for $m$.
For all other states, the automaton has a transition to the resulting state when the equations are checked, carries updated, and saved symbols updated as per the input on the transition and the current status as given by the original state.
If the associated equation on the right-hand side isn't verified or it results in a carry larger than $N$, then the transition is omitted.
The loading stage is characterized by having less than $m$ saved symbols and the shifting phase having exactly $m$ saved symbols that it cycles through.

\subsubsection*{Checking equations for the second component of the input}

Once $M_{N, k}$ has seen all of the input $b$, the input changes to being of the form $(a[\ell], X)$.
We call this final section of processing the \textit{unloading phase}.
Any transition with an input of the form $(a[\ell], X)$ pushes the automaton directly into the unloading phase
This can lead to not having $m$ saved symbols in the queue of saved symbols despite having read all of $b$.
If this occurs, $M_{N, k}$ implicitly pads the front of the queue of saved symbols with enough zeroes to have $m$ saved symbols.
At this point the automaton has all the digits of $(B)_k$ (except $\sigma_b$), and has yet to examine the middle section of $(A)_k$ that corresponds to the remainder of $a$.
At this point, when the automaton reads a symbol ($a[\ell]$, X), it represents $a[\ell]$ which is both the leftmost and rightmost digit of the unexamined middle of $(A)_k$.
This middle section lines up with the queue of saved symbols to supply the $b$ symbols that are no longer coming from the input.

The automaton must now contend with the possibility that $(B)_k$ has odd length and there may be a symbol of $(B)_k$ not given in $b$.
It nondeterministically decides what the central symbol $\sigma_b \in \Sigma_k \cup \{X\}$ is for $(B)_k$.
If $\sigma_b \neq \epsilon$, then it proceeds using the input $a[\ell]$ as the symbol from $(A)_k$ for both the left and right side equations.
The automaton uses the chosen symbol as the $(B)_k$ symbol for the right-hand side equations since we have already processed the entire right-hand side of $(B)_k$.
The left-hand side, as usual, pops the first in symbol in the queue of saved symbols but doesn't add anything else to the queue since there is no new $b[\ell]$.
The left and right carries are updated as usual and the automaton continues with a reduced queue of saved symbols.
If $M_{N, k}$ decides that $\sigma_b = \epsilon$, then it skips the step described in this paragraph and proceeds directly with the subsequent steps.

At this point, $M_{N, k}$ consumes both ends of the queue of saved symbols to compute the usual equations for the left-hand and right-hand sides with the input $a[\ell]$.
This proceeds, consuming two symbols of the queue of saved symbols each time.
Once the automaton has less than 2 symbols left, there are two remaining cases.
If there are 0 saved symbols remaining and the left and right carries are equal, then the automaton accepts the input.
In this case, the entire series of equations are satisfied and the input represents a valid $A/B = N$ with $(A)_k$ and $(B)_k$ palindromes.
Alternatively, if the automaton has one saved symbol left, then this is the case where $(A)_k$ has an odd number of symbols.
If there is an assignment for the middle symbol $\sigma_a$ that results in the carries being equal, then the automaton accepts the input.

\subsubsection*{Algorithm implementation}

We implemented an algorithm for computing the desired $A$ and $B$ for a given $N$ and base $k$ in Python.
We build the automaton as described and afterwards run Dijkstra's algorithm using the symbols of $B$ as edge weights to get the shortest $B$ from the start to accept state.
Computing the automaton is $O(k^2N^3)$ as it has $O(kN^3)$ states with $O(k)$ transitions out of each state.
Given the size of the automaton and a binary heap handling the Dijkstra's algorithm, our algorithm runs in $O(k^2N^3 \log(k^2N^3))$.
The existence of $A$ and $B$ can be shown in $O(k^2N^3)$ with a simple breadth first search of the automaton for the accept state but due to the nondeterminism and variability in the difference of lengths, it can't guarantee a minimal example. 

The code is available at

\url{https://github.com/josephmeleshko/Palindrome-Ratio-Set-Automata-Generator}

This guaranteed decision algorithm can prove that there are no solutions for a variety of integers that the heuristic algorithm fails to determine. For example, our algorithm was able to prove that there are no solutions to the equation $N = A/B$ for $N \in \{ 2551, 14765, 15247, 17093, 19277, 19831 \}$.

Let $\qpal = \{ 1,3,5,7,9,11,13,15,17,19,21,27,31,\ldots \}$
be the set of integers representable as the
quotient of palindromic numbers.
With this code  we computed the data in Table~\ref{tab3} showing
the distribution of elements of $\qpal$
according to the number of bits.
\begin{table}[H]
    \centering
    \begin{tabular}{c|c}
        $i$ & $| \qpal \, \cap \, [2^{i-1},2^i)|$  \\
         \hline
         1 & 1 \\
         2 & 1 \\
         3 & 2 \\
         4 & 4 \\
         5 & 5 \\
         6 & 10 \\
         7 & 17 \\
         8 & 33 \\
         9 & 55 \\
         10 & 98 \\
         11 & 165 \\
         12 & 309 \\
         13 & 571 
    \end{tabular}
    \caption{Number of $i$-bit numbers representable as the quotient of palindromes}
    \label{tab3}
\end{table}
This numerical data suggests that perhaps
roughly $0.34 \cdot 1.76^i$ $i$-bit numbers are representable.

We can easily prove the following lower bound
on the number of representable $i$-bit
numbers.

\begin{theorem}
There are $\Omega( \sqrt{2}^i)$ $i$-bit
integers representable as the quotient of
palindromic numbers.
\end{theorem}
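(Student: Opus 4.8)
The plan is to exhibit a large, easily counted family of representable $i$-bit integers by using the trivial denominator $B = 1$. Since $1 = [1]_2$ is itself a palindrome, every palindromic number $A$ satisfies $A = A/1 \in \pal/\pal$, and so every palindrome lies in $\qpal$. It therefore suffices to count the palindromes having exactly $i$ bits, since each such palindrome is an $i$-bit element of $\qpal$ and distinct palindromes give distinct integers.

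Next I would count these palindromes directly. An $i$-bit binary palindrome is completely determined by its first $\ceil{i/2}$ bits, the remaining bits being forced by the palindrome condition. The leading (most significant) bit must be $1$ for the representation to be canonical, while the trailing bit is then automatically $1$ as well (consistent with the fact, noted earlier, that every positive palindrome is odd). Hence the free choices are the bits in positions $2, 3, \ldots, \ceil{i/2}$, giving exactly $2^{\ceil{i/2}-1}$ palindromes of length $i$.

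Finally, since $2^{\ceil{i/2}-1} \geq 2^{(i-1)/2} = \tfrac{1}{\sqrt 2}\,(\sqrt 2)^i$, this count is $\Omega(\sqrt{2}^{\,i})$, and as observed above each of these palindromes is a distinct $i$-bit member of $\qpal$. This establishes the claimed lower bound.

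Honestly, there is no real obstacle here: the argument uses only the quotients with $B = 1$, so the only points requiring care are splitting the palindrome count according to the parity of $i$ and confirming that the produced integers are genuinely distinct and have exactly $i$ bits. The true difficulty lies elsewhere, in closing the gap between this weak $\Omega(\sqrt{2}^{\,i})$ bound and the empirically observed growth rate of roughly $1.76^i$ suggested by Table~\ref{tab3}; improving the exponent would require systematically accounting for representations $A/B$ with $B \neq 1$, which is substantially harder and which we do not pursue here.
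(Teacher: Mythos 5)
Your proposal is correct and is essentially the paper's own proof: both arguments take $B=1$ and count the $i$-bit palindromes, which are determined by their first $\ceil{i/2}$ bits. (One trivial quibble: for even $i$ the inequality $2^{\ceil{i/2}-1}\geq 2^{(i-1)/2}$ fails by a factor of $\sqrt{2}$, but $2^{\ceil{i/2}-1}\geq \tfrac12 (\sqrt{2})^i$ always holds, so the $\Omega(\sqrt{2}^i)$ conclusion is unaffected.)
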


\begin{proof}
Every $i$-bit palindromic number $N$
can be written as $N/1$, and there
are $\Omega(2^{i/2})$ of them.
\end{proof}

Even the following seems hard to prove.
\begin{conjecture}
The set of integers representable as quotients of palindromic numbers is of zero density.
\end{conjecture}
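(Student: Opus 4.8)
The plan is to reduce the density statement to an exponential-savings count on each dyadic block and then to attack that count through the palindrome constraint on the numerator. Write $f(i) = |\qpal \cap [2^{i-1},2^i)|$ for the number of representable $i$-bit integers (the quantity tabulated in Table~\ref{tab3}). First I would observe that it suffices to prove $f(i) = O(\rho^i)$ for some fixed $\rho < 2$. Indeed, for $n \in [2^{i-1},2^i)$ the number of elements of $\qpal$ in $[1,n]$ is at most $\sum_{j \le i} f(j) = O(\rho^i) = O(n^{\log_2 \rho})$, and dividing by $n$ gives $O(n^{\log_2\rho - 1}) \to 0$ because $\log_2\rho < 1$. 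Thus the whole problem becomes: beat the trivial bound $f(i) \le 2^{i-1}$ by a constant factor in the base. (The data of Table~\ref{tab3} suggest the truth is $f(i) \approx 1.76^i$, so in principle there is a good deal of room.)

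Next I would organize the count by the length $\ell = |(B)_2|$ of the denominator. If $N = A/B$ with $A,B$ palindromes and $N$ an $i$-bit integer, then $A = BN$ forces $|(A)_2| = a$ with $a - \ell \in \{i-1,i\}$. Since distinct values of $N$ are no more numerous than valid pairs, $f(i) \le \sum_{\ell} P(\ell)$, where $P(\ell)$ counts pairs $(B,N)$ with $(B)_2$ an $\ell$-bit palindrome, $N$ an $i$-bit integer, and $BN$ an $a$-bit palindrome. The heuristic count of $P(\ell)$ is the engine of the argument: there are $\approx 2^{\ell/2}$ palindromes $B$, each $N$ contributes $i$ free bits, while requiring $A=BN$ to be an $a$-bit palindrome imposes $\approx \lfloor a/2\rfloor \approx (\ell+i)/2$ binary constraints, leaving $\approx \ell/2 + i - (\ell+i)/2 = i/2$ degrees of freedom. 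This predicts $P(\ell) = O(2^{i/2})$ uniformly in $\ell$. Making this rigorous is the first technical step: because $B$ is odd, the map $N \mapsto (BN) \bmod 2^s$ is injective modulo $2^s$, so the low half of $A$ is in bijection with a window of residues of $N$, while the high half of $A$ is a deterministic function of $N$; the palindrome condition equating the high half with the reversal of the low half becomes a single digit equation, and one must bound its solution set by $O(2^{i/2})$ by controlling carry propagation (for instance via an anti-concentration or exponential-sum estimate).

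Granting $P(\ell) = O(2^{i/2})$, we would get $f(i) \le \sum_{\ell \le \ell_{\max}} O(2^{i/2}) = O(\ell_{\max}\cdot 2^{i/2})$, which already beats $2^i$ provided $\ell_{\max} = o(2^{i/2})$; that is, provided every representable $i$-bit $N$ admits a representation whose denominator has $o(\sqrt{N})$ bits. This is exactly where I expect the main obstacle to lie. The only size bound presently available is the $O(N^3)$-state automaton of Section~\ref{decision2}, which merely yields $|(B)_2| = O(N^3)$ bits---exponentially weaker than the $o(\sqrt{N})$ bits the count requires. So the crux is a drastic improvement, in the palindromic setting, of the generic representation-size bound discussed under Problem~3: one must show that minimal representations are small. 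The data of Table~\ref{tabone} are encouraging (for $N=223$ one has $|(B)_2|\approx 14$ bits and for $N=149$ about $16$, suggesting $|(B)_2| = O(\log N)$, far better than needed), but I know of no argument forcing minimal denominators to be even polynomially bounded in $N$, and the heuristic-failure examples of Table~\ref{exceptions}, where the first non-palindromic position of a product sits arbitrarily deep inside, are a caution that large denominators cannot be excluded cheaply.

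Finally, I would keep open a route that avoids this a~priori size bound altogether. It would suffice to show either that $P(\ell)$ genuinely \emph{decays} as $\ell$ grows, or that the number of integers first becoming representable at denominator length $\ell$ is summably small, since either statement lets the large-$\ell$ tail be discarded. I would therefore also try to package the outside-in reading of the palindromes $A$ and $B$ together with the relation $A=BN$ into a single transfer operator, whose spectral radius would directly furnish the base $\rho$; the difficulty there is that $N$ is the object being counted rather than a fixed parameter, so the operator must be constructed uniformly over all $N$.
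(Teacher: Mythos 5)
You have not proved the statement, and you should know that the paper does not prove it either: it is posed there as a conjecture, explicitly prefaced by ``Even the following seems hard to prove,'' and it remains open. Your opening reduction is sound --- zero density does follow from a bound $f(i)=O(\rho^i)$ with $\rho<2$ by dyadic summation --- and your heuristic count is the natural one (it matches the paper's numerics, $f(i)\approx 0.34\cdot 1.76^i$). But the two steps that would constitute an actual proof are both absent, as you yourself concede.

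First, the bound $P(\ell)=O(2^{i/2})$ is pure heuristics. The palindrome constraint on $A=BN$ couples the high half of $A$ (a function of the high bits of $N$ together with accumulated carries) to the reversal of its low half (a function of $N \bmod 2^{\floor{a/2}}$), and no anti-concentration or exponential-sum estimate controlling this coupling uniformly over all odd palindromic $B$ is known; naming the tool is not supplying it. Second, and more fatally, your summation needs every representable $i$-bit $N$ to admit some representation whose denominator has $o(2^{i/2})=o(\sqrt{N})$ bits. The only proven bound --- from the automaton of Section~\ref{decision2} --- allows the smallest representation to have on the order of $k^{\ceil{\log_k N}}N^2$ bits, i.e., $O(N^3)$ bits, which exceeds your requirement by an exponential factor; and the paper separately conjectures that smallest solutions are not polynomially bounded in $N$, so cheap size bounds are expected to be false (a quasipolynomial value bound $N^{O(\log N)}$, i.e., $O(\log^2 N)$ bits, would suffice for you and is consistent with the data, but nothing of the sort is proved). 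Your fallback suggestions --- decay of $P(\ell)$ in $\ell$, summability of first-representation counts, or a transfer-operator formulation uniform in $N$ --- are again programs rather than arguments. What you have is a reasonable research plan that correctly identifies the obstructions, not a proof of the conjecture.
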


\subsection{Size of smallest representation for palindromes}

The sequence
$$ 1,1,1,1,1,3,5,1,1,27,1,1,1,1,5,3,1,\ldots$$
of the minimal size of denominators $B$ for those $N$ having
a representation $A/B$ as a quotient of palindromes forms sequence
\seqnum{A305470} in the OEIS.

Suppose $N = A/B$ for palindromic numbers $A, B$.  We can use our algorithm
based on finite automata to upper bound the size of the numerator and denominator of the smallest such
representation using the following simple idea:
\begin{proposition}
If an NFA of $t$ states accepts the base-$k$ representation of the first
halves of strings $(A)_k$ and $(B)_k$ for palindromic numbers $A, B$
 such that $N = A/B$, then $A, B < k^{2t-1}$.   
\end{proposition}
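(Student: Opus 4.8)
The plan is to reduce the statement to the elementary fact that a nondeterministic finite automaton with few states, if it accepts anything at all, accepts a short string. First I would recall the standard pigeonhole argument: if an NFA with $t$ states accepts some input, then a shortest accepting run cannot repeat a state, since otherwise one could excise the loop between two visits to the same state and obtain a strictly shorter accepting run. Such a run therefore visits at most $t$ states and uses at most $t-1$ transitions, so the NFA accepts some input of length at most $t-1$.

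Next I would apply this to the automaton in the hypothesis. Each accepted input is a pair $\langle a, b\rangle \in \big(\Sigma_k \times (\Sigma_k \cup \{X\})\big)^*$ whose length, counted in combined symbols, equals $\abs{a}$, since $b$ is padded out to the length of $a$. Hence the shortest accepted input has $\abs{a} \leq t-1$, and because the construction guarantees $\abs{a} \geq \abs{b}$, we also have $\abs{b} \leq t-1$.

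The third step is to recover the palindromes from their accepted halves. By design, the full representations are $(A)_k = a\,\sigma_a\,a^R$ and $(B)_k = b\,\sigma_b\,b^R$ for some middle symbols $\sigma_a, \sigma_b \in \{\epsilon\} \cup \Sigma_k$, the $\epsilon$ case corresponding to even-length palindromes and the single-symbol case to odd-length ones. Consequently $\abs{(A)_k} = 2\abs{a} + \abs{\sigma_a} \leq 2(t-1) + 1 = 2t-1$, and likewise $\abs{(B)_k} \leq 2t-1$. Since any base-$k$ number with at most $d$ digits is at most $k^d - 1 < k^d$, taking $d = 2t-1$ gives $A, B < k^{2t-1}$, as desired.

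I expect no serious obstacle here: the whole argument is routine once the NFA pigeonhole fact is in hand. The only point demanding care is the bookkeeping for the middle symbol of odd-length palindromes, which is the source of the additive $+1$ in the exponent, together with the observation that the quantity the automaton actually controls is the half-length $\abs{a}$, not the full length $\abs{(A)_k}$. Keeping these straight is exactly what pins down the exponent as $2t-1$ rather than $2t$ or $2t-2$.
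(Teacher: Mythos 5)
Your proof is correct and follows essentially the same route as the paper's: the pigeonhole argument showing a $t$-state NFA that accepts anything accepts an input of length at most $t-1$, applied to the paired half-representations, then the bound $\abs{(A)_k}, \abs{(B)_k} \leq 2(t-1)+1 = 2t-1$ accounting for the optional middle symbol. Your version merely spells out the loop-excision and bookkeeping details that the paper leaves implicit.
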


\begin{proof}
By the pigeonhole principle applied to the sequence of states traversed
by an input, if an NFA of $t$ states accepts at least one string, then it must
accept a string of length at most $t-1$.  Hence if we have an NFA as given
in the hypothesis, it must accept at least one pair of inputs in parallel
of length $\leq t-1$.   Thus $|(A)_k|, |(B)_k| \leq 2(t-1)+1 = 2t-1$, and
so $A, B < k^{2t-1}$.
\end{proof}

A naive bound on the size of the automata $M_{N, k}$ observes that each of the three phases has unique states that are characterized by one of two maximum numbers of saved symbols $s$, up to $\ceil{\log_k N}$ saved symbols each taking one of $k$ values, and two carries each ranging from $0$ to $N-1$.
This means there are at most $$3 \cdot 2 \cdot k^{\ceil{\log_k (N)}} \cdot N^2 \leq 6 \cdot k\cdot N \cdot N^2 \in O(k\cdot N^3)$$
states in $M_{N, k}$.

More strongly, we have that the loading phase takes at most $\ceil{\log_k N}$ transitions since the automaton adds one symbol to the queue of saved symbols at a time.
The shifting phase takes at most $k^{\ceil{\log_k (N)}} \cdot N^2$ transitions since the automaton at worst goes through every state once.
The unloading phase takes at most $\ceil{\frac{\ceil{\log_k N}}{2}} = \ceil{\frac{\log_k N}{2}}$ transitions since the automaton removes two symbols from the queue of saved symbols at a time.
However, the unloading phase can also require an extra check that implicitly uses the central symbol of $(A)_k$ if $\ceil{\log_k N}$ is even and both $(A)_k$ and $(B)_k$ have an extra central symbol.

Since each transition adds two digits to $A$ and the unloading phase can may implicitly use one additional symbol from $(A)_k$, we have therefore shown:
\begin{theorem}
If there exists an $A$ and $B$ such that $(A)_k$ and $(B)_k$ are palindromes and $A/B = N$, then for the smallest $A$, $$\abs{(A)_k} \leq 2 \cdot \left( \ceil{\log_k N} + k^{\ceil{\log_k N}} \cdot N^2 + \ceil{\frac{\log_k N}{2}}\right) + 1.$$
\end{theorem}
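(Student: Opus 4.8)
The plan is to read the bound directly off the structure of the automaton $M_{N,k}$ constructed in Section~\ref{decision2}, combined with the shortest-path argument in the spirit of the proposition above. The key observation is that a solution $N = A/B$ in palindromes corresponds exactly to an accepting computation of $M_{N,k}$, in which the input string $a$ encodes (approximately) the first half of $(A)_k$. Each transition of the computation reads one symbol $a[\ell]$, and by the palindrome-mirroring built into the automaton, that single symbol accounts for two digits of $(A)_k$ (the symbol together with its mirror image), except for a single possible central digit $\sigma_a$ contributed at the end when $(A)_k$ has odd length. Thus if $P$ denotes the number of transitions along the accepting path, then $\abs{(A)_k} \leq 2P + 1$.

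Next I would argue that the \emph{smallest} $A$ arises from a \emph{shortest} accepting path. Since a canonical base-$k$ representation has a nonzero leading digit, a number with fewer digits is always smaller, so minimizing the value of $A$ amounts to minimizing $\abs{(A)_k}$, hence minimizing $P$. A shortest accepting computation never repeats a state within a single phase, since any repetition would contain a cycle that could be excised to yield a shorter accepting computation for some still-valid solution. Consequently $P$ is bounded by the total number of states the path can visit, and because the automaton passes through the loading, shifting, and unloading phases in that fixed order, this total splits as the sum of the per-phase bounds.

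It then remains to collect the per-phase counts already established in the preceding discussion. The loading phase adds one saved symbol per transition, contributing at most $\ceil{\log_k N}$ transitions; the shifting phase visits each of its states at most once, and there are at most $k^{\ceil{\log_k N}} \cdot N^2$ of them (at most $\ceil{\log_k N}$ saved symbols over an alphabet of size $k$, together with two carries each in $\{0, \ldots, N-1\}$); the unloading phase removes two saved symbols per transition, contributing at most $\ceil{\frac{\log_k N}{2}}$ transitions, with at most one extra implicit central symbol. Summing gives $P \leq \ceil{\log_k N} + k^{\ceil{\log_k N}} \cdot N^2 + \ceil{\frac{\log_k N}{2}}$, and substituting into $\abs{(A)_k} \leq 2P + 1$ yields the claimed bound.

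The step I expect to require the most care is the correspondence between path length and digit count, together with the claim that the shortest path minimizes $A$. One must check that each of the three phases really does contribute digits in the stated two-to-one ratio, that the nondeterministic guesses (the length difference $m$ and the central symbols $\sigma_a, \sigma_b$) do not spoil this bookkeeping, and that cutting a cycle out of a non-simple accepting path genuinely preserves acceptance without increasing $A$. The per-phase state and transition counts themselves are exactly those tabulated just before the theorem, so once the correspondence is pinned down the remaining arithmetic is routine.
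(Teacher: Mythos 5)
Your proposal is correct and takes essentially the same approach as the paper: it bounds the transitions in each of the three phases (loading at most $\ceil{\log_k N}$, shifting at most $k^{\ceil{\log_k N}} \cdot N^2$ via the no-repeated-state argument, unloading at most $\ceil{\frac{\log_k N}{2}}$) and then converts transitions to digits of $A$ at two per transition plus one possible central symbol. The only difference is that you make explicit the cycle-excision and smallest-$A$-from-shortest-path steps that the paper leaves implicit.
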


Record-setting values of the smallest
representation $A, B$ are given in Table~\ref{largest}.
\begin{table}[H]
\centering
\resizebox{\columnwidth}{!}{%
\begin{tabular}{c|c|c}
$N$ & $A$ & $B$ \\
\hline
1 & 1 & 1 \\
11 & 33 & 3 \\
13 & 65 & 5 \\
19 & 513 & 27 \\
53&           3339&             63          \\
71&          54315&            765          \\
79&         888987&          11253        \\
149&        5887437&          39513       \\
319&      224725611&         704469     \\
575&   147606740625&      256707375   \\
1823&394070635302093&   216166009491\\
2597 & 96342506397593044197 & 37097615093412801 \\
5155 & 324903223321029232798074465 & 63026813447338357477803 \\
10627 & 9300753824529071312360470246068903 & 875200322247960036921094405389\\
22331 & 79377444895975693055708664734623129867563975 & 3554585325152285748766677029001080554725
\end{tabular}
}
\caption{Record-setting values of smallest representation $N = A/B$ in palindromic numbers.}
\label{largest}
\end{table}

\begin{conjecture}
The size of the smallest solution to $N = A/B$ in palindromes $A, B$, if it exists,
is not bounded by any polynomial in $N$.
\end{conjecture}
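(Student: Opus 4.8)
The plan is to attack this conjecture by exhibiting an explicit infinite family of representable integers $N_1 < N_2 < \cdots$ for which the smallest numerator $A$ in any representation $N_j = A/B$ grows faster than $N_j^{\,c}$ for every fixed $c$. Two things must be established for each family member: first, that $N_j \in \qpal$ at all, so that a representation exists; and second, a lower bound showing that every palindromic $A$ with $N_j \mid A$ and $A/N_j$ palindromic satisfies $\abs{(A)_2} \geq f(N_j)$ for some function $f$ that is super-logarithmic in $N_j$, so that $A \geq 2^{f(N_j)}$ is genuinely super-polynomial. These two goals pull against each other: the family must be rich enough in palindromic multiples to be representable, yet sparse enough in \emph{short} palindromic multiples to force the minimal solution to be long. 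The numerical records in Table~\ref{largest} are encouraging here, since the apparent exponent $\log_2 A / \log_2 N$ itself increases with $N$, which is exactly the signature of growth that no single polynomial can bound.

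For the lower bound I would work directly with the automaton $M_{N,2}$ of Section~\ref{decision2}, whose shortest accepting path has length proportional to the number of bits of the minimal $A$. The goal is to show that for the chosen $N_j$ every short accepting path is impossible. Concretely, multiplying a candidate half-palindrome $B$ by $N$ produces a carry sequence whose behaviour is governed, modulo $N$, by the successive powers of $2$; the palindrome constraint then forces the left carry computed descending from the top to equal the right carry computed ascending from the bottom. I would try to prove that when the multiplicative order of $2$ modulo $N_j$ (or modulo a carefully chosen divisor of $N_j$) is large, this matching of the two carry computations cannot happen until they have each run for a number of steps comparable to that order. Since infinitely many primes $p$ have $2$ as a primitive root (granting Artin's conjecture, or using unconditional families for which $\operatorname{ord}_p(2)$ is provably large), this would yield $\abs{(B)_2} = \Omega(\operatorname{ord}_{N_j}(2))$ and hence a super-polynomial $A$.

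For existence I would produce a single explicit representation for each $N_j$, imitating the algebraic verification carried out for $N = 2551$: choose strings $r,s,t,u,v,w$ so that $[s^n (s^R)^n]_2$ and its product with $N_j$ both telescope into closed forms of the shape $\alpha \cdot 2^{2Ln} + \beta \cdot 2^{Ln}$, and then check that their ratio equals $N_j$ exactly for some finite $n$. Alternatively, the denseness construction of Theorem~\ref{one} can be specialized: it manufactures a palindrome $A$ whose quotient $A/B$ is arbitrarily close to a target, and with care one can arrange that the integer target $N_j$ is hit exactly rather than merely approximated.

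The main obstacle is the lower bound, and specifically the need to rule out \emph{all} short representations simultaneously rather than only those detectable from fixed-size prefixes. The failure of the Heuristic Principle documented in Table~\ref{exceptions} is precisely a warning that the interaction between the reversal constraint and the carry structure of multiplication by $N$ is subtle: the first position at which a product fails to be a palindrome can sit arbitrarily deep inside the word. Converting the heuristic link between minimal length and $\operatorname{ord}_N(2)$ into a rigorous, uniform lower bound across an infinite family, while simultaneously guaranteeing that every $N_j$ remains representable, is the crux of the problem and is very likely where its real difficulty resides.
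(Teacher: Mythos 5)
There is a genuine gap --- in fact there is no proof here to compare against, on either side. The statement you are addressing is stated in the paper as a \emph{conjecture}, supported only by the numerical records of Table~\ref{largest} (the paper even speculates that the truth is a bound of the form $N^{O(\log N)}$, which would be consistent with the conjecture but is likewise unproven). Your proposal is a research plan whose two pillars are both left unestablished, and you say so yourself; but beyond being incomplete, two of its concrete ingredients do not work as described. First, the ``existence'' constructions you propose do not produce representations. The algebraic verification for $N=2551$ that you want to imitate was introduced in the paper to exhibit a \emph{failure}: the telescoped identity shows $N \cdot [r s^n (s^R)^n r^R]_2 = [t u^{n-i} v w v^R (u^R)^{n-i} t^R]_2$ exactly, but the right-hand side is \emph{not} a palindrome, precisely because $\pald(w) = d > 0$; indeed $2551$ is proven unrepresentable by the automaton method. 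Similarly, the denseness construction of Theorem~\ref{one} yields $A/B$ arbitrarily close to a target but gives no mechanism for the quotient to be an integer, let alone a prescribed one; nothing in that proof can be ``specialized'' to hit $N_j$ exactly. Second, your lower-bound mechanism --- that matching the ascending and descending carry computations in $M_{N,2}$ requires $\Omega(\operatorname{ord}_{N}(2))$ steps --- is asserted, not argued. No property of the automaton is identified that would force this, and the documented failures of the Heuristic Principle (Table~\ref{exceptions}) show that products $N\cdot B$ can agree with a palindrome on arbitrarily long prefixes and suffixes, which is exactly the kind of behaviour any such argument must exclude.

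There is also an unresolved structural conflict at the heart of the plan: the integers you would select for the lower bound (e.g., primes $p$ with $2$ a primitive root, conditionally on Artin or via unconditional families with large $\operatorname{ord}_p(2)$) have no reason to lie in $\qpal$ at all. The paper's data show that many primes are unrepresentable ($23$, $29$, $37$, $\ldots$), and the paper proves that a positive-density set of integers is unrepresentable, so representability is genuinely restrictive. Until you exhibit a single infinite family for which representability and a super-logarithmic lower bound on $\abs{(A)_2}$ are \emph{simultaneously} proven, the conjecture remains exactly where the paper leaves it: open.
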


The available numerical data suggest that perhaps the smallest
solution, when one exists, is bounded by $N^{O(\log N)}$.

\subsection{Infinitely many integers with no representation}

Since $2^n +1$ is a palindrome for every $n \geq 1$, it is clear
that infinitely many integers belong to $\qpal$.  We now prove
that there are infinitely many odd integers in
the complement $\Enn - \qpal$.

\begin{theorem}
There are infinitely many odd positive integers $N$ for which there is no
solution to $N = A/B$ in palindromes $A, B$.
\end{theorem}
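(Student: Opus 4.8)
The plan is to exhibit an explicit infinite family and prove it non-representable by turning the $k=3$ case of the heuristic principle of Section~\ref{heuristic} into a rigorous argument. Concretely, I would take $N = 2^s + 3$ for $s \geq 5$; the smallest member, $2^5 + 3 = 35$, is exactly the worked example above, and Table~\ref{tabone} already shows that $35, 67, 131$ (that is, $2^5+3, 2^6+3, 2^7+3$) are all ruled out at $k=3$. Suppose for contradiction that $A = NB$ with $A,B$ palindromes. The point is that for these $N$ the three trailing bits are essentially decoupled from the three leading bits (since $N \equiv 3 \pmod 8$ for $s\ge 3$, while the high part is just a single leading $1$), so the three-bit prefix of $B$ over-determines the three-bit prefix of $A$ in two mutually incompatible ways. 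Write $\rho(x)$ for the integer obtained by reversing the length-$3$ binary string of $x$.

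First I would record the low-order constraint. Let $\beta$ be the integer formed by the top three bits of $B$ and $\gamma = B \bmod 8$ its bottom three bits; the palindrome property of $B$ gives $\gamma = \rho(\beta)$. Then $A \bmod 8 = (N \bmod 8)\,\gamma \bmod 8 = 3\gamma \bmod 8$, and the palindrome property of $A$ forces the top three bits of $A$ to equal $\rho(3\gamma \bmod 8)$. Next I would record the high-order constraint: from $A = (2^s+3)B$ and $B \in [\beta\, 2^{m-3},(\beta+1)2^{m-3})$, where $m = |(B)_2|$, dividing by the appropriate power of $2$ shows that $\lfloor A / 2^{|(A)_2|-3} \rfloor$, the top three bits of $A$, lies in $\{\beta,\beta+1\}$, with the single exceptional case $\beta=7$ together with a length increase yielding instead the value $4$. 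The proof then collapses to a finite check over $\beta \in \{4,5,6,7\}$: one computes $\rho(3\rho(\beta)\bmod 8) = 6,7,4,5$ respectively, none of which lies in the corresponding allowed set $\{4,5\},\{5,6\},\{6,7\},\{7\}$ (and, in the exceptional $\beta=7$ subcase, $5 \neq 4$). This contradiction eliminates every palindromic $B$ with $m \ge 3$, and the finitely many short cases $B \in \{1,3\}$ are handled directly; in particular for $B=1$ one checks that $(2^s+3)_2 = 1\,0^{\,s-2}\,11$ is not a palindrome for $s \ge 3$.

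I expect the main obstacle to be the high-order constraint, namely proving rigorously that the top three bits of $A$ can only be $\beta$, $\beta+1$, or (exceptionally) $4$. This requires controlling the carry contributed by the lower-order term $3B$ and allowing for the possibility that $|(A)_2| = m+s+1$ rather than $m+s$. The threshold $s \ge 5$ is precisely what makes the error term $3(\beta+1)2^{-s}$ strictly less than $1$, which is what pins down the floor of $A/2^{|(A)_2|-3}$; this is also why $2^3+3 = 11$ and $2^4+3 = 19$ escape the argument and are in fact representable. Once this estimate is established the incompatibility is a mechanical four-row verification, and since $\{\,2^s+3 : s \ge 5\,\}$ is an infinite set of odd numbers, the theorem follows.
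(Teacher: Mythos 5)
Your proposal is correct, and at its core it runs on the same mechanism as the paper's proof: pit the high-order constraint (the leading three bits of $A$, pinned down by a multiplicative range estimate) against the low-order constraint ($A \bmod 8$, pinned down by the residue of $N$ and the fact that palindromicity forces the leading three bits to be the reversal of the trailing three), and kill all cases in a finite check. I verified your arithmetic: for $\beta = 4,5,6,7$ the forced prefix $\rho(3\rho(\beta) \bmod 8)$ is $6,7,4,5$ respectively, none of which lies in the allowed set $\{\beta,\beta+1\}$ (nor equals $4$ in the length-increase subcase $\beta = 7$); your threshold condition $3(\beta+1) \leq 24 < 2^s$ for $s \geq 5$ does make the prefix estimate rigorous, and the short denominators $B \in \{1,3\}$ are trivial to dispose of. The genuine difference is the choice of family, and it matters for what the paper does next. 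The paper applies the argument to \emph{all} $N \equiv 1 \pmod 8$ with $5 \cdot 2^k < N < 6 \cdot 2^k$ (binary prefix $101$), casing on the prefix of $A$ rather than of $B$, and exploiting $N \equiv 1 \pmod 8$ so that $A \equiv B \pmod 8$ with no multiplication needed; that family has positive lower density, which the paper immediately harvests in the corollary following the theorem (density $\geq 1/40$ of unrepresentable integers). Your family $\{2^s+3 : s \geq 5\}$ has only $O(\log x)$ elements below $x$, so it proves the stated theorem but cannot yield that corollary. The fix is cheap, though: your estimates never used more about $N$ than $N \equiv 3 \pmod 8$ and $2^s < N < 2^s + 2^{s-3}$ (the worst case $\beta = 7$ needs exactly $8N < 9 \cdot 2^s$), so the identical four-row check rules out every $N \equiv 3 \pmod 8$ whose binary expansion begins $1000$ --- a positive-density family complementary to the paper's, which would recover the corollary as well.
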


\begin{proof}
We prove that if $5 \cdot 2^k < N <
6 \cdot 2^k$ and $N \equiv \modd{1} {8}$, then
$N$ has no representation.

We prove this by considering the four possibilities for the first three bits of $A$:
\begin{itemize}
\item $A = 100\dots 001$. Then $A\equiv 1\pmod 8$ and $4\cdot 2^{j+3} < A < 5\cdot 2^{j+3}$ for some $j$. Hence $\frac 23\cdot 2^{j-k} < B < 2^{j-k}$, i.e., $B$ starts with $101$, $110$ or $111$. Therefore $B$ ends with $101$, $011$ or $111$, i.e., $B\equiv 5, 3$ or $7\pmod 8$, i.e., $A\equiv 5, 3$ or $7\pmod 8$, a contradiction. 

\item $A = 101\dots 101$. Then $A\equiv 5\pmod 8$ and $5\cdot 2^{j+3} < A < 6\cdot 2^{j+3}$ for some $j$. Hence $\frac 56\cdot 2^{j-k} < B < \frac 65\cdot 2^{j-k}$, i.e., $B$ starts with $110$, $111$ or $100$. Therefore $B$ ends with $011$, $111$ or $001$, i.e., $B\equiv 3, 7$ or $1\pmod 8$, i.e., $A\equiv 3, 7$ or $1\pmod 8$, a contradiction. 

\item $A = 110\dots 011$. Then $A\equiv 3\pmod 8$ and $6\cdot 2^{j+3} < A < 7\cdot 2^{j+3}$ for some $j$. Hence $2^{j-k} < B < \frac 75\cdot 2^{j-k}$, i.e., $B$ starts with $100$ or $101$. Therefore $B$ ends with $001$ or $101$, i.e., $B\equiv 1$ or $5\pmod 8$, i.e., $A\equiv 1$ or $5\pmod 8$, a contradiction. 

\item $A = 111\dots 111$. Then $A\equiv 7\pmod 8$ and $7\cdot 2^{j+3} < A < 8\cdot 2^{j+3}$ for some $j$. Hence $\frac 76\cdot 2^{j-k} < B < \frac 85\cdot 2^{j-k}$, i.e., $B$ starts with $100$, $101$ or $110$. Therefore $B$ ends with $001$, $101$ or $011$, i.e., $B\equiv 1, 5$ or $3\pmod 8$, i.e., $A\equiv 1, 5$ or $3\pmod 8$, a contradiction. 
\end{itemize}
\label{samin}
\end{proof}

In fact, we've proved something more:

\begin{corollary}
The set of unrepresentable $N$ has positive density in the natural numbers.
\end{corollary}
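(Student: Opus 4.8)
The plan is to read off positive density directly from the family of unrepresentable integers constructed in the proof of Theorem~\ref{samin}. Set
$$\mathcal{F} = \{ N \in \Enn \suchthat \exists\, k \geq 0 \text{ with } 5 \cdot 2^k < N < 6 \cdot 2^k \text{ and } N \equiv \modd{1} {8} \}.$$
By that theorem every element of $\mathcal{F}$ lies in the complement $\Enn - \qpal$, so it suffices to prove that $\mathcal{F}$ has positive lower density in $\Enn$. Note that every element of $\mathcal{F}$ is odd, so this also yields the substantive statement that the unrepresentable integers have positive density even among the odd integers (the even integers being trivially unrepresentable, since a quotient of two odd palindromes is odd).

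First I would record that the intervals $I_k = (5 \cdot 2^k,\, 6 \cdot 2^k)$ are pairwise disjoint, since the right endpoint $6 \cdot 2^k$ of $I_k$ is strictly less than the left endpoint $5 \cdot 2^{k+1} = 10 \cdot 2^k$ of $I_{k+1}$. Each $I_k$ is an interval of length $2^k$, so for $k \geq 3$ it contains at least $2^{k-3} - 1$ integers congruent to $1$ modulo $8$. Summing over $0 \leq k \leq K$ and discarding the negligible small-$k$ terms then gives the linear-growth estimate $|\mathcal{F} \cap [1,\, 6 \cdot 2^K]| \geq \tfrac14 \cdot 2^K - O(K)$.

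The only genuine subtlety is passing from this growth estimate to a positive \emph{lower} density, because the counting function $n \mapsto |\mathcal{F} \cap [1,n]|$ is constant on each gap $(6 \cdot 2^k,\, 10 \cdot 2^k)$ separating consecutive intervals, so the ratio $\tfrac1n |\mathcal{F} \cap [1,n]|$ oscillates and is smallest at the right end of each gap. I would therefore evaluate the ratio at the worst-case points $n \approx 10 \cdot 2^K$, where the numerator is $\approx \tfrac14 \cdot 2^K$, yielding a ratio bounded below by $\approx \tfrac{1}{40}$ uniformly in $K$; within each interval $I_{K+1}$ the ratio only rises from this value toward $\approx \tfrac1{24}$, so the infimum over all $n$ is attained at the gap ends. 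Taking the $\liminf$ then produces a strictly positive lower density. The main obstacle is exactly this liminf bookkeeping — locating the worst-case $n$ and checking the ratio stays bounded away from zero there — rather than any new number-theoretic ingredient, since the hard work has already been done in Theorem~\ref{samin}.
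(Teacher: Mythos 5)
Your proposal is correct and is essentially the paper's own argument: both count the integers $N \equiv 1 \pmod{8}$ whose binary representation begins with $101$, locate the minimum of the counting ratio at the points $x = 5\cdot 2^{n}$ (the right ends of the gaps, in your phrasing), and conclude that the lower density is $1/40 > 0$. The paper's proof is simply a terser version of your liminf bookkeeping.
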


\begin{proof}
From the result above,
a number $N$ is unrepresentable if $N \equiv \modd{1} {8}$ and the first
three bits of $N$ in base $2$ are $101$.   Let us count the number $f(x)$
of integers $\leq x$ satisfying these two conditions.  Clearly
$f(x)/x$ achieves a local minimum when $x = 5 \cdot 2^n$.
In this case $f(x) = 2^{n-3}-1$.   It follows that
$\liminf_{x \rightarrow \infty} f(x)/x = 1/40$.
\end{proof}

\begin{remark}
This bound $1/40$ for the lower density can easily be improved by considering other intervals.
\end{remark}

\subsection{Infinitely many different representations}

\begin{theorem}
Suppose there is one solution in palindromes $A,B$
to the equation $N = A/B$.  Then there are infinitely many
solutions.
\end{theorem}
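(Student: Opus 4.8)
The plan is to take a single solution and ``inflate'' it by multiplying numerator and denominator by a common factor chosen so that both products remain palindromic. Suppose $A/B = N$ with $A,B$ palindromes, and write $n = |(A)_2|$ and $m = |(B)_2|$. Since $N \geq 1$ forces $A \geq B$, we have $n \geq m$. For every integer $t \geq n$ I would set
$$ A' = A \cdot (2^t + 1), \qquad B' = B \cdot (2^t + 1), $$
and claim that both are palindromes whose quotient is again $N$.

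The key step is to verify the palindrome property of the two products. Because $t \geq n \geq m$, the shifted copy $A \cdot 2^t$ and the unshifted copy $A$ occupy disjoint bit positions, so no carries occur and the binary representation of $A'$ is exactly the concatenation $(A)_2 \, 0^{t-n} \, (A)_2$; likewise $(B')_2 = (B)_2 \, 0^{t-m} \, (B)_2$. Reversing $(A)_2 \, 0^{t-n} \, (A)_2$ produces $(A)_2^R \, 0^{t-n} \, (A)_2^R$, and since $(A)_2$ is itself a palindrome this equals the original string; the same computation applies to $B'$. Both strings begin with the leading $1$ of $(A)_2$ respectively $(B)_2$, so they are canonical representations with no spurious leading zero, and hence $A', B'$ are genuine palindromic numbers.

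Finally, $A'/B' = A/B = N$ since we multiplied top and bottom by the same quantity, and as $t$ ranges over all integers $\geq n$ the numerators $A' = A(2^t+1)$ strictly increase, so the pairs $(A',B')$ are pairwise distinct. This yields infinitely many solutions from the one assumed to exist.

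I expect the only real obstacle to be the bookkeeping that guarantees the no-overlap condition: one must choose the threshold $t \geq n$ large enough that the two copies of $(A)_2$ (and of $(B)_2$) never collide, so that the product literally reads as the claimed concatenation and no carry can disturb the palindromic structure. Once that condition is secured, the palindrome check and the quotient computation are immediate. The same argument carries over verbatim to an arbitrary base $k$ by multiplying instead by $k^t + 1$ and taking $t \geq |(A)_k|$.
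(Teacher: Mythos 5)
Your proof is correct and is essentially the paper's own argument: multiplying both $A$ and $B$ by $2^t+1$ with $t \geq |(A)_2|$ produces exactly the strings $(A)_2\,0^{t-n}\,(A)_2$ and $(B)_2\,0^{t-m}\,(B)_2$, which is the paper's construction $A_i = [(A)_2\,0^i\,(A)_2]_2$, $B_i = [(B)_2\,0^{i+d}\,(B)_2]_2$ with $i = t-n$ and $d = n-m$. The only cosmetic difference is that the paper phrases the construction directly as string concatenation, while you phrase it as multiplication by a common factor and then verify the concatenation; the no-carry and leading-bit checks you supply are the same implicit content.
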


\begin{proof}
Suppose there is one solution $N = A/B$.
Let $d = | (A)_2 | - | (B)_2 |$.  
For each $i \geq 0$ define $A_i = [ (A)_2 0^i (A)_2 ]_2$
and $B_i = [ (B)_2 0^{i+d} (B)_2 ]_2$.
Then $A_i$ and $B_i$ are clearly palindromic numbers,
and $N = A_i/B_i$.
\end{proof}

\subsection{Rational solutions to $p/q = A/B$ in palindromes}

Our automaton method, discussed in Section~\ref{decision2}, can be modified to get a solution $A, B$ in palindromes where $A/B = p/q$ for integers $p/q$.
Instead of computing $N \cdot B = A$, the automaton computes $p \cdot B = q \cdot A$.
For simplicity, we assume that $p > q$ as $p = q$ is trivial and if $p < q$ then solutions to $p/q$ can be derived from the solutions to $q/p$.

The structure of the automaton is similar but each state has a few modifications.
In place of the right carry, we have a carry $c_{A,\ell}$ for $A$ and a carry $c_{B, \ell}$ for $B$.
At each step, the automaton verifies
$$
q \cdot A_\ell + c_{A, \ell-1} = p \cdot B_\ell + c_{B, \ell-1} \mod k.
$$
Let $m$ be the remainder of $p \cdot B_\ell + c_{B, \ell-1}$ divided by $k$.
The automaton then computes
$$
c_{A, \ell} = \frac{q \cdot A_\ell + c_{A, \ell-1} - m}{k}
$$
and
$$
c_{B, \ell} = \frac{p \cdot B_\ell + c_{B, \ell-1} - m}{k}.
$$
We get familiar bounds on the size of the carries, $0 \leq c_{A, \ell} < q$ and $0 \leq c_{B, \ell} < p$.

There is still just a single left carry, but it has to implicitly track the left carry for both $A$ and $B$.
To accomplish this, it tracks the difference between the left carry of $B$ and the left carry of $A$.
Let $c_\ell = c_{B,\ell} - c_{A, \ell}$.
We can then derive an equation for computing $c_{\ell-1}$ from $c_\ell$.
\begin{align*}
c_\ell &= c_{B, \ell} - c_{A, \ell}\\
c_\ell &= \frac{p \cdot B_\ell + c_{B, \ell-1} - m}{k} - \frac{q \cdot A_\ell + c_{A, \ell-1} - m}{k}\\
k \cdot c_\ell &= p \cdot B_\ell + c_{B, \ell-1} - m - q \cdot A_\ell - c_{A, \ell-1} + m\\
k \cdot c_\ell &= p \cdot B_\ell + c_{B, \ell-1} - q \cdot A_\ell - c_{A, \ell-1}\\
k \cdot c_\ell - p \cdot B_\ell + q \cdot A_\ell &= c_{B, \ell-1} - c_{A, \ell-1}\\
k \cdot c_\ell - p \cdot B_\ell + q \cdot A_\ell &= c_{\ell-1}
\end{align*}
From the bounds on $c_{A, \ell}$ and $c_{B, \ell}$ we get that $-q < c_\ell < p$.

The remaining structure is essentially identical.
An accepting state is one where the left carry is equal to the difference of the two right carries.
(With the nondeterminism around the middle symbols handled as usual.)
The automaton nondeterministically chooses the difference in size of $A$ and $B$ to be either the floor or ceiling of $\log_k \frac{p}{q}$.
Since $1 < p/q < k$ can be a valid input, $A$ and $B$ could have the same length.
However, this only simplifies the construction as we ignore the loading and unloading phase entirely since all of the symbols in the equations line up perfectly.

Given the constraints on all the information we track, (and $p > q$,) there are at most
$$6 \cdot (p+q-1) \cdot p \cdot q \cdot k^{\ceil{\log_k \frac{p}{q}}} \in O(kp^3)$$
states in the new automaton which gives analogous bounds for computation of the minimal $p/q = A/B$.

\begin{conjecture}
For all odd numbers $p > 1$, $p \not=23$, there exists
an odd number $q <p$ such that $p/q = A/B$ has a solution
in palindromes $A, B$.
\end{conjecture}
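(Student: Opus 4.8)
The plan is to first reduce the conjecture to a statement about a common multiplier. If $p/q=A/B$ with $A,B\in\pal$ and $q$ odd, write $p/q=p_0/q_0$ in lowest terms, so that $p_0=p/\gcd(p,q)$ divides $p$, the pair $(p_0,q_0)$ is coprime with $q_0<p_0$, and $q_0$ is odd. Cross-multiplying gives $p_0\mid A$, whence $A=p_0m$ and $B=q_0m$ for an integer $m$ that is odd because binary palindromes are odd. Conversely, from a divisor $p_0\mid p$, a coprime odd $q_0$ with $1\le q_0<p_0$, and an odd $m$ with $p_0m,q_0m\in\pal$, one recovers a representation of $p$ by setting $q=(p/p_0)\,q_0$. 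Hence the conjecture is equivalent to the assertion that for every odd $p>1$, $p\neq23$, there are a divisor $p_0\mid p$, a coprime odd $q_0<p_0$, and an odd $m$ with $p_0m$ and $q_0m$ both palindromic; the sub-case $p_0=p$, $q_0=1$ is exactly the statement $p\in\qpal$, so the real content concerns the odd $p\notin\qpal$ (namely $23,25,29,\ldots$).

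Next I would exploit a closure property visible in this reformulation. Call an odd $p>1$ \emph{good} if some odd $q<p$ satisfies $p/q\in\pal/\pal$. If $p_0$ is good and $n$ is odd, then $np_0$ is good: a witnessing pair $A=Pm$, $B=Qm$ for $p_0$ (with $P\mid p_0$ the reduced numerator) also witnesses $np_0$ through the divisor $P\mid np_0$ and the denominator $(np_0/P)\,Q$. Thus the good numbers are closed under multiplication by odd integers, and an induction on $p$ reduces the whole conjecture to two families: the primes $p\neq23$, and the prime powers $23^a$ with $a\ge2$ (a composite $p$ with a prime factor $\ell\neq23$ inherits goodness from $\ell$). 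The prime case is the heart of the matter.

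For a prime $p$ the only admissible divisor is $p_0=p$, so one must produce a single odd $m$ with $pm$ palindromic and $q_0m$ palindromic for some odd $q_0<p$. Here I would remove the first condition for free by borrowing the periodicity behind Theorem~\ref{one}: since $p$ is odd, $1/p$ is purely periodic in binary with period $t=\mathrm{ord}_p(2)$, so for every multiple $d=et$ the repunit $[1^d]_2=2^d-1$ is a palindrome divisible by $p$, and choosing $m=(2^d-1)/p=[w^e]_2$ makes $pm=2^d-1$ palindromic automatically. It then remains only to arrange that $B=q\,(2^d-1)/p$ be a palindrome for some odd $q<p$, giving $A/B=(2^d-1)/B=p/q$. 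This leaves a two-parameter family of candidates indexed by $(q,e)$, and the goal becomes to prove that at least one of the numbers $q\,[w^e]_2$ is a binary palindrome.

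The main obstacle is the uniformity of this last step and the need to rule out any second exception. The modified automaton of Section~\ref{decision2} decides membership of each fixed $p/q$ in $\pal/\pal$, so for any individual $p$ the predicate ``some odd $q<p$ works'' is decidable and the conjecture can be verified up to any chosen bound; but finite computation can neither establish the statement for all $p$ nor, crucially, certify that $23$ is the only failure. A genuine proof therefore needs a construction --- plausibly the repunit family above, analysed by a counting or density argument showing that palindromes are forced among the $q\,[w^e]_2$ once $t=\mathrm{ord}_p(2)$ is large --- that succeeds for all sufficiently large primes, after which only finitely many small primes and the powers of $23$ remain to be checked by automaton. The exception $p=23$ is emblematic of the difficulty: it is prime, so it enjoys no proper-divisor flexibility, every odd $q<23$ fails, and there is no clean modular or interval reason for this, strongly suggesting that the decisive argument must be generic (succeeding for essentially probabilistic reasons) rather than a closed-form construction, and that controlling the finitely many primes below the threshold where genericity takes hold will be the crux.
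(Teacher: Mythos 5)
First, a point of status: the statement you were asked to prove is presented in the paper as a \emph{conjecture}, not a theorem. The paper offers no proof of it; the authors only report that they verified it computationally for all $p < 1000$ (using the automaton of Section~\ref{decision2} adapted to check $p\cdot B = q\cdot A$), and that for $p=23$ the same automaton \emph{provably} shows no odd $q<23$ works, which is why $23$ appears as an exception. So there is no paper argument to compare yours against, and your proposal --- as you yourself concede in its final paragraph --- is not a proof either: it is a reduction plus an unproven hope. Your reductions are correct and worth recording: writing $p/q=p_0/q_0$ in lowest terms and using that binary palindromes are odd does show the conjecture is equivalent to finding $p_0\mid p$, coprime odd $q_0<p_0$, and odd $m$ with $p_0m,q_0m\in\pal$; the closure of ``good'' numbers under odd multiplication follows, and the problem correctly collapses to odd primes $p\neq 23$ together with the powers $23^a$, $a\geq 2$. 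That observation is not in the paper and is a genuine (if modest) structural contribution.

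The gap is the prime case, and your proposed construction for it is not merely unfinished --- it is heuristically doomed, and your stated intuition about it is backwards. You restrict $A$ to repunits $2^{et}-1$ with $t=\mathrm{ord}_p(2)$, so the candidates for $B$ are the numbers $q\,[w^e]_2$ with odd $q<p$; for fixed $e$ these are about $p/2$ integers of roughly $et$ bits. Palindromes of that length number about $2^{et/2}$ out of $2^{et-1}$, so a random model predicts about $p\,2^{-et/2}$ palindromic candidates at level $e$, and summing the geometric series over $e\geq 1$ gives an expected total of order $p\,2^{-t/2}$. Since $p\mid 2^t-1$ forces $2^t>p$, this expectation exceeds $1$ only when $p>2^{t/2}$, i.e., only for primes that are the dominant prime factor of $2^t-1$; for typical primes $t$ is comparable to $p$ (e.g., $t=p-1$ whenever $2$ is a primitive root), making the expectation astronomically small. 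Even $p=23$, with $t=11$, fails the threshold ($23<2^{5.5}\approx 45$). So your claim that ``palindromes are forced among the $q\,[w^e]_2$ once $t=\mathrm{ord}_p(2)$ is large'' has the monotonicity reversed: large $t$ makes the family longer and palindromicity rarer while the number of candidates stays $O(p)$. Any genuine attack on the prime case must let $A$ range over general palindromic multiples of $p$ (of which there are about $2^{d/2}/p$ among $d$-bit palindromes, heuristically), not just the repunit family; and even then one faces exactly the genericity-versus-exception difficulty you identify, which is why the statement remains open in the paper.
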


We have verified this conjecture for $p < 1000$.   For $p = 23$ we can definitively
prove, using our automaton method, that there is no odd $q < 23$ such that
$p/q = A/B$ has a solution in palindromes.

Sometimes the smallest solution to $p/q=A/B$ can be quite large.
For example, the smallest solution to $A/B = 979/765$ in
palindromic numbers is

$$ {{\scriptstyle 435964577851526887677597179561025269848009167916543881959761365529045212378773108135544954987} \over {\scriptstyle 340666907105636434191380840004274087266319727738668099794910566526782009672892163149838499045}} \ .$$

\section{Antipalindromic numbers}

In this section we treat the same six problems for the antipalindromic numbers.   

\subsection{Denseness}

\begin{theorem}
The set ${\tt APAL}/{\tt APAL}$ is dense in the positive reals.
\label{thm14}
\end{theorem}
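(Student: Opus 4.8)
The plan is to mimic the proof of Theorem~\ref{one} as closely as possible, adapting the palindrome construction to the antipalindromic setting. Recall that a number is antipalindromic in base $2$ if its binary representation has even length $2n$ and the second half is the reverse complement of the first half; equivalently, if the first half is $w$, then the whole string is $w \, \overline{w^R}$. Given a target real $\alpha > 0$, I would again reduce to the case $\alpha \le 1$ by replacing $\alpha$ with $1/\alpha$ if necessary (note that $A/B$ antipalindromic forces both $A,B$ even, but the quotient of two antipalindromic numbers is still symmetric under reciprocation of the target). As before, set $\beta = 2^k \alpha$ with $\tfrac12 < \beta \le 1$, and form an approximant to $\beta$ out of its first $n$ bits, but this time glue on the \emph{reverse complement} rather than the reverse, obtaining an antipalindrome $A = [(\gamma)_2 \, \overline{(\gamma)_2^R}]_2$ where $\gamma = \lfloor 2^n \beta \rfloor$.

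The key computation is to identify a clean antipalindromic denominator $B$ playing the role of $2^{2n+k}+1$ from Theorem~\ref{one}. Since an all-ones-then-all-zeros pattern $1^m 0^m$ is antipalindromic, a natural choice is $B = [1^{n+k} 0^{n+k}]_2 = 2^{n+k}(2^{n+k}-1)$, or some nearby explicit antipalindrome whose value is close to $2^{2(n+k)}$. I would then estimate $A$: writing $A = [(\gamma)_2 \, \overline{(\gamma)_2^R}]_2$, the top half contributes roughly $\gamma \cdot 2^{n}$ and the complemented bottom half contributes $2^n - 1 - (\text{reversal of } \gamma)$, so $A$ is within an additive $O(2^n)$ of $\gamma \cdot 2^n \approx \beta \cdot 2^{2n}$. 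The ratio $A/B$ is therefore $\beta \cdot 2^{2n} / (2^{2(n+k)} + O(2^{2n+k})) = 2^{-2k}\beta\,(1+o(1))$, which is off by a factor of $2^{-k}$ from the desired $2^{-k}\beta = \alpha$; this signals that the exponents in $B$ need recalibrating. I would fix the bookkeeping by choosing $B$ of length about $2n$ (matching $A$) scaled appropriately so that $A/B \to \alpha$, carefully tracking the additive error from complementation exactly as the original proof tracks the $\pm 2^{-n}$ error terms, and then push $n \to \infty$.

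The main obstacle I anticipate is precisely this complementation bookkeeping: in the palindromic case the low half of $A$ equals the reversal of the high half, so $A$ is almost exactly $\gamma \cdot 2^n$ up to the known digit reversal, whereas here the low half is the \emph{complement} of a reversal, introducing an extra additive term of size comparable to $2^n$ (namely $2^n - 1 - \operatorname{rev}(\gamma)$) that must be shown to be negligible relative to $A \approx 2^{2n}$. Because this extra term is $O(2^n)$ while $A \approx 2^{2n}\beta$, its relative contribution is $O(2^{-n}) \to 0$, so the approximation quality should be comparable to the palindromic case; I just need to verify that both numerator and denominator stay genuinely antipalindromic (even length, correct complement-reversal structure, no leading-zero issues) for all large $n$. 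An alternative, cleaner route is to invoke Theorem~\ref{nark}: if I can show the antipalindromic numbers $a_1 < a_2 < \cdots$ satisfy $a_{n+1}/a_n \to 1$, then $\apal/\apal$ is dense immediately. Establishing that consecutive-ratio condition amounts to showing that antipalindromes are dense enough on a multiplicative scale, which follows from the fact that within each length-$2m$ block there are $2^{m-1}$ antipalindromes spread across the interval $[2^{2m-1}, 2^{2m})$; checking that the largest gap between consecutive antipalindromes is $o(a_n)$ is the crux, and I would pursue whichever of the two approaches yields the tighter control with the least casework.
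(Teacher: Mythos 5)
Your first route stalls on a parity obstruction that your ``recalibration'' cannot repair in the form you describe. Your numerator $A=[(\gamma)_2\,\overline{(\gamma)_2^R}]_2$ always has an even number of bits, so $A=\beta\cdot 4^n\,(1+O(2^{-n}))$, while every denominator of your proposed shape $[1^m0^m]_2=2^{2m}-2^m$ sits just below an \emph{even} power of $2$. Hence every quotient you can build is $\beta\cdot 4^{n-m}(1+o(1))$ with $\beta\in(\tfrac{1}{2},1]$, and such numbers accumulate only in $\bigcup_{j\in\mathbb{Z}}\,(\tfrac{1}{2}\cdot 4^{j},\,4^{j}]$; a target like $\alpha=0.3$ (for which $2^k\alpha\in(\tfrac{1}{2},1]$ forces $k=1$, odd) is never approached, no matter how you choose $m$ relative to $n$. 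Note also that ``scaling'' an antipalindrome by a power of $2$ destroys the antipalindromic structure (it appends zeros and changes the length parity), so the fix cannot be mere bookkeeping. What is missing is a second explicit family of antipalindromes sitting next to \emph{odd} powers of $2$, namely $[1\,0^c\,1^c\,0]_2=2^{2c+1}+2^{c+1}-2$, together with a case split on the parity of $k$. That is exactly how the paper argues: for $k$ odd it pairs your $A$ with $B=[1\,0^c\,1^c\,0]_2$, $c=n+(k-1)/2$, so that $B\approx 2^{2n+k}$; for $k$ even it swaps the roles, putting the digit approximation $\gamma=\lfloor 2^n/\beta\rfloor$ into the denominator $B=[(\gamma)_2\,\overline{(\gamma)_2^R}]_2$ and taking $A=[1\,0^{n-k/2}\,1^{n-k/2}\,0]_2$. (Equivalently you could keep your structure and use $B=[1^{n+k/2}0^{n+k/2}]_2$ when $k$ is even and the $[1\,0^c\,1^c\,0]_2$ form when $k$ is odd; either way the parity case analysis is the idea your proposal lacks.)

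Your fallback route through Theorem~\ref{nark} is a dead end, and the paper says so explicitly in the remark following its proof: for the increasing sequence $(a_n)$ of antipalindromic numbers one has $\limsup_{n\rightarrow\infty} a_{n+1}/a_n=2$, so the hypothesis of that criterion fails. The reason is precisely the point your gap estimate glosses over: antipalindromes have no odd-length representations, so the entire interval $[2^{2m},2^{2m+1})$ contains none of them. The gap from the largest antipalindrome of length $2m$ (namely $2^{2m}-2^m$) to the smallest of length $2m+2$ (namely $2^{2m+1}+2^{m+1}-2$) is comparable to $a_n$ itself, not $o(a_n)$. Within a single length block the gaps are indeed only $O(\sqrt{a_n})$, as your count of $2^{m-1}$ antipalindromes suggests, but the between-block gap alone kills the consecutive-ratio condition, so only the direct construction (with the parity split above) goes through.
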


\begin{proof}
The proof is analogous to the proof of Theorem~\ref{one}.  We just outline
the basic idea.
Let $\alpha$, $\beta$, and $k$ be as in that proof.

There are two cases:  $k$ odd and $k$ even.

If $k$ is odd, for a given $n$ define $\gamma = \lfloor 2^n \beta \rfloor$.  Set
$A = [(\gamma)_2 \overline{ (\gamma_2)^R} ]_2$ and
$B = [1 0^c 1^c 0]_2$ for $c = n+(k-1)/2$.   Then $A$ and $B$
are both antipalindromic numbers, and $A/B$ is an arbitrarily
good approximation to
$\alpha$, as $n \rightarrow \infty$.

If $k$ is even, define $\gamma = \lfloor 2^n/\beta \rfloor$.  Set 
$B = [(\gamma)_2 \overline{ (\gamma_2)^R} ]_2$ and
$A = [1 0^{n-k/2} 1^{n-k/2} 0]_2$.   Then
$A/B$ is an arbitrarily
good approximation to
$\alpha$, as $n \rightarrow \infty$.
\end{proof}

\begin{remark}
Let $a_1< a_2 < a_3 < \cdots$ be the antipalindromic numbers.
Here the criterion of Theorem~\ref{nark} would not suffice to prove
Theorem~\ref{thm14}, since $\limsup_{n \rightarrow \infty} a_{n+1}/a_n = 2$.
\end{remark}

\subsection{Quotients of antipalindromic numbers}

The set $\qapal = \{ 1,5,6,15,17,18,19,20,21,24,26,\ldots \}$ of
integers representable as the quotient of two antipalindromic
numbers, forms sequence \seqnum{A351172} in the OEIS.  The
set $\Enn - \qapal = \{ 2,3,4,7,8,9,10,11,12,13,14,16,22, \ldots \}$
of unrepresentable integers forms sequence 
\seqnum{A351173}.

\subsubsection{Decision algorithm}
\label{antipal-decision}

We can verify if a given $N$ is representable as the quotient of antipalindromic numbers $A$ and $B$ using an analogous method to the algorithm given in Section~\ref{decision2}.
We build a similar automaton to the automaton in Section~\ref{decision2}, though it interprets the input $\langle a, b\rangle$ as the quotient of $A = a \sigma_a \overline{a}^R$ and $B = b \sigma_b \overline{b}^R$.
This interpretation is dependent on the base $k$, as the middle character must be such that $\sigma = \overline{\sigma}$. 
If $k$ is odd then, $\sigma_x \in \{\epsilon, (k-1)/2\}$ and if $k$ is even then $\sigma_x = \epsilon$.
When the automaton interprets the input as above and computes accordingly, it accepts antipalindromes that have quotient $N$.
The algorithm also achieves the same asymptotic bounds.
Thus we have
\begin{theorem}
There is an algorithm that, given a natural number $N$, can determine
if there exist antipalindromes in base $k$ $A, B$ such that $N = A/B$ in
$O(k^2N^3)$ time.
\end{theorem}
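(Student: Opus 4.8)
The plan is to adapt, essentially verbatim, the finite-automaton construction of Section~\ref{decision2}, changing only how the automaton interprets its two input halves and how the \emph{left} carries are computed. Recall that in the palindromic case $M_{N,k}$ reads a pair $\langle a,b\rangle$ and implicitly treats it as encoding the equation $[a\sigma_a a^R]_k = [b\sigma_b b^R]_k\cdot N$. Here I would instead build an automaton $M'_{N,k}$ that interprets $\langle a,b\rangle$ as $[a\sigma_a\overline{a}^R]_k = [b\sigma_b\overline{b}^R]_k\cdot N$, so that the decoded $A$ and $B$ are antipalindromes rather than palindromes. Everything concerning the right-hand (low-order) digits is identical: upon reading $(a[\ell],b[\ell])=(A_\ell,B_\ell)$ the automaton checks $A_\ell=(N\cdot B_\ell+c_{\ell-1})\bmod k$ and updates the right carry by $c_\ell=(N\cdot B_\ell+c_{\ell-1}-A_\ell)/k$, with the same bound $c_\ell<N$ established in Section~\ref{decision2}.

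The first genuine modification is to the left-carry machinery. In the palindromic construction, the high-order digit of $A$ matched to the just-read low-order digit $a[\ell]$ was $a[\ell]$ itself; in the antipalindromic construction it is the complement $\overline{a[\ell]}=k-1-a[\ell]$, and likewise the matched high-order digit of $B$ is $\overline{b[\ell]}$ while $b$ is still being supplied. Thus the descending recurrence $c_{\ell-1}=k\cdot c_\ell-N\cdot B_\ell+A_\ell$ is applied with $A_\ell$ and $B_\ell$ replaced by their complements. This is a purely local change to the transition relation and does not alter what a state must remember: each state is still identified by the queue of at most $m$ saved symbols, the guessed length difference $m$, the left and right carries, and the current phase. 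The treatment of the central symbols is the second change, and here I would invoke the constraint $\sigma=\overline{\sigma}=k-1-\sigma$: a central digit can exist only when $k$ is odd, in which case $\sigma_x\in\{\epsilon,(k-1)/2\}$, while for even $k$ we must take $\sigma_x=\epsilon$. The loading, shifting, and unloading phases, the nondeterministic guess of the length difference (still lying between $\floor{\log_k N}$ and $\ceil{\log_k N}$), and the acceptance condition that the left and right carries agree all transfer without alteration.

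Finally, since the state space admits exactly the same description as in the palindromic case—namely $O(kN^3)$ states, each with $O(k)$ outgoing transitions—the automaton $M'_{N,k}$ is constructed in $O(k^2N^3)$ time, and a breadth-first search over it for an accepting state decides representability within the same bound, giving the claim. I expect the only delicate point to be bookkeeping the complementation correctly in the left-carry recurrence: one must verify that replacing both the top and bottom digits by $k-1-a[\ell]$ and $k-1-b[\ell]$ interacts correctly with the carry equation, and that the final (non-leading) digit $\overline{a[1]}$ of $A$ introduces no spurious leading-zero violation in the canonical representation. Because the left and right computations are carried out concurrently and then reconciled exactly as before, this amounts to a careful but mechanical transcription of the earlier correctness argument rather than a new idea.
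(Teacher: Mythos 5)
Your proposal is correct and matches the paper's own proof essentially verbatim: the paper likewise builds the Section~\ref{decision2} automaton but reinterprets the input $\langle a,b\rangle$ as $A = a\sigma_a\overline{a}^R$, $B = b\sigma_b\overline{b}^R$, imposes the middle-character constraint $\sigma = \overline{\sigma}$ (so $\sigma_x \in \{\epsilon,(k-1)/2\}$ for odd $k$ and $\sigma_x = \epsilon$ for even $k$), and inherits the same $O(kN^3)$-state, $O(k^2N^3)$-time bounds. Your added detail about complementing the digits in the left-carry recurrence is exactly the ``computes accordingly'' step the paper leaves implicit.
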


With our algorithm we computed the number of representable $i$-bit integers.
\begin{table}[H]
    \centering
    \begin{tabular}{c|c}
        $i$ & $| \qapal \, \cap \, [2^{i-1},2^i)|$  \\
         \hline
         1 & 1 \\
         2 & 0 \\
         3 & 2 \\
         4 & 1 \\
         5 & 8 \\
         6 & 4 \\
         7 & 24 \\
         8 & 17 \\
         9 & 75 \\
         10 & 50 \\
         11 & 247 \\
         12 & 165 \\
         13 & 903
    \end{tabular}
    \caption{Number of $i$-bit numbers representable as the quotient of antipalindromes}
    \label{tab4}
\end{table}
The available data suggest that perhaps there are roughly
$.12 \cdot 1.81^i$ $i$-bit solutions for $i$
even, and $.36 \cdot 1.81^i$ for $i$ odd.

We can prove the following lower bound.
\begin{theorem}
There are $\Omega(\sqrt{2}^i)$ $i$-bit integers
representable as the quotient of antipalindromes.
\end{theorem}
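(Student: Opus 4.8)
The plan is to adapt the one-line argument used for the palindromic analogue, but with a twist forced by the structure of antipalindromic numbers: since every antipalindrome has even length and is therefore even, no antipalindrome equals an odd integer, and the trivial denominator $B=1$ used in the palindromic case is not itself antipalindromic. The smallest available antipalindrome is $B=2$, so I would first exploit the representations $N = A/2$. An antipalindrome of length $2m$ has the form $A = u\,\overline{u}^R$ with $|u|=m$ and leading bit $1$, so there are exactly $2^{m-1}$ of them; each ends in $0$ and is hence even, so $A/2$ is an integer, and distinct choices of $u$ give distinct values of $A/2$. Since $A/2$ is obtained from $(A)_2$ by deleting its final $0$, it has exactly $2m-1$ bits, and as $2$ is antipalindromic, every such $A/2$ lies in $\qapal$. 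This already gives $2^{m-1} = 2^{(i-1)/2} = \Omega(\sqrt{2}^{\,i})$ representable integers for every odd $i = 2m-1$.

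The remaining, and genuinely harder, case is even $i$, precisely because both shortcuts fail there: an antipalindrome need not itself lie in $\qapal$ (for instance $10 \notin \qapal$), and dividing any antipalindrome by $2$ always lands on an odd bit-length. To reach even bit-lengths I would move up to the next denominator, $B = 10 = (1010)_2$, and count antipalindromes $A = u\,\overline{u}^R$ of length $2m = 2c+4$ for which $A/10$ is an integer with exactly $2c$ bits. A short interval computation shows that $A/10$ has $2c$ bits exactly when $5\cdot 2^{2c} \leq A < 10\cdot 2^{2c}$, which together with the length constraint $2^{2c+3}\leq A < 2^{2c+4}$ forces $A \in [8\cdot 2^{2c}, 10\cdot 2^{2c})$, i.e. $(A)_2$ begins with $100$; this pins down $u_1u_2u_3 = 100$ and leaves $c-1$ free bits in $u$. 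Since $A$ is always even, the only arithmetic constraint is $5 \mid A$, after which $A/10$ is the desired $2c$-bit integer and distinct $u$ again give distinct quotients.

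The main obstacle is thus the divisibility bookkeeping: I must show that a constant fraction of the $2^{c-1}$ admissible strings $u$ satisfy $A \equiv 0 \pmod 5$. The fact that makes this tractable is that toggling a single free bit $u_j$ changes $A$ by $\pm\bigl(2^{2m-j} - 2^{j-1}\bigr)$, and this increment is never $0 \pmod 5$: it would vanish only if $2^{2m-2j+1} \equiv 1 \pmod 5$, forcing $4 \mid 2m-2j+1$, impossible since $2m-2j+1$ is odd while the multiplicative order of $2$ modulo $5$ is $4$. Because every free coordinate moves the residue nontrivially, a standard character-sum estimate — bounding each factor by $|1+\omega^{t g_j}| \leq 2\cos(\pi/5) < 2$ for a primitive fifth root of unity $\omega$ — shows that the number of admissible $u$ with $5 \mid A$ is $\bigl(1+o(1)\bigr)\,2^{c-1}/5$. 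This yields $\Omega(2^{c}) = \Omega(\sqrt{2}^{\,i})$ representable integers for every even $i = 2c$, and combined with the odd case completes the bound. I expect the only delicate step to be making this ``constant fraction'' count fully rigorous, which the nonzero-increment observation reduces to the routine estimate above.
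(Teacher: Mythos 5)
Your proposal is correct, and the odd-$i$ half coincides with the paper's (antipalindromes of length $i+1$ divided by $B=2$). For the harder even case, however, you take a genuinely different route. The paper keeps the divisibility exact rather than statistical: it takes $A = [x\, x^R\, \overline{x}\, \overline{x}^R]_2$ of length $4j$ and the \emph{growing} denominator $B = [(10)^j]_2 = 2(2^{2j}-1)/3$, using the identity $[x\,y\,\overline{x}\,\overline{y}]_2 = (2^{2j}-1)([x]_2 2^j + [y]_2 + 1)$ (with $y = x^R$, so the second factor is even) to make every such $A$ divisible by $B$ automatically; a short interval computation on $[x]_2$ then pins the quotient to $2j$ bits, giving roughly $2^{j+1}/12$ representable integers with no counting argument at all. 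You instead fix the denominator $B = 10$ for all even $i$ and pay for it with an equidistribution argument: the residue of $A$ modulo $5$ must be controlled, which you do correctly via the observation that toggling a free bit changes $A$ by $2^{2m-j} - 2^{j-1} \not\equiv 0 \pmod 5$ (since $2$ has order $4$ modulo $5$ and the exponent gap $2m-2j+1$ is odd), followed by the standard character-sum bound $\abs{1+\omega^{tg_j}} \leq 2\cos(\pi/5) < 2$. Both yield $\Omega(\sqrt{2}^i)$ with comparable constants. The paper's argument is more elementary and self-contained; yours brings in analytic machinery but buys something the paper's does not: a single fixed antipalindromic denominator suffices for every even bit-length, and as a by-product you show that a positive proportion (asymptotically $1/5$) of antipalindromes with prescribed leading bits $100$ are divisible by $10$, a statement with independent content that also adapts readily to other fixed moduli.
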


\begin{proof}
If $i$ is odd, then we can get
$O(\sqrt{2}^i)$ representable integers of $i$ bits
by taking $A$ to be an antipalindromic
number of $i+1$ bits, and $B = 2$.

If $i$ is even, say $i = 2j$, we have to work a bit
harder.  First we observe that if $x,y$
are arbitrary binary strings of $j$ bits
and $y$ ends in $1$, and
$A = [x \, y \, \overline{x} \, \overline{y}]_2$, then $A$
is divisible by $2(2^{2j}-1)$.

To see this, note that
\begin{align*}
A &= [x]_2 \cdot 2^{3j} + [y]_2 \cdot 2^{2j} + [\overline{x}]_2 \cdot 2^j + 
[\overline{y}]_2 \\
&= [x]_2 \cdot 2^{3j} + [y]_2 \cdot 2^{2j} + (2^j - 1 - [x]_2)\cdot 2^j + 
(2^j - 1 - [y]_2) \\
&= (2^{2j} - 1)([x]_2 \cdot 2^j + [y]_2 + 1),
\end{align*}
and observe that the second factor of the
last line is even if $[y]_2$ is odd.

Now take $y = x^R$, so that $x$ starts with $1$.  Then $A$ is an antipalindromic
number (because its base-$2$ representation is
given by $x\, x^R \, \overline{x} \, \overline{x}^R $).   From the previous paragraph we see that $A$ is divisible
by $2(2^{2j} - 1)$.  Since $2^{2j} - 1$ is
divisible by $3$, it follows that
$A$ is divisible by the antipalindromic number $B = 2(2^{2j} - 1)/3 =
[(10)^j]_2$.

Finally we need to estimate the number
of these quotients $A/B$ that have $i$ bits.
Suppose $2^{j-1} \leq [x]_2  \leq (2^{j+1} - 5)/3$.   Then
\begin{align*}
    2^{4j-1} & \leq A \leq 2^{3j} (2^{j+1} - 2)/3 \\
    {{3 \cdot 2^{4j-1}} \over 
    {2 \cdot (2^{2j} - 1)}} & \leq A/B \leq 
    {{(2^j -1)2^{3j}} \over {2^{2j}-1}} = {{2^{3j}} \over {2^j+1}} < 2^{2j},
\end{align*}
so $A/B$ has $2j$ bits.  Thus there are
at least 
$$(2^{j+1} - 5)/3 - 2^{j-1} + 1 = 2^{j+1}/12 -2/3$$ numbers of $2j$
bits that are representable as quotients
of antipalindromic numbers.
\end{proof}

Even the following seems hard to prove.
\begin{conjecture}
The set of integers representable as quotients of antipalindromic numbers is of zero density.
\end{conjecture}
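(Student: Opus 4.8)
The plan is to reduce the statement to a counting estimate on dyadic blocks. Writing $R_i = \abs{\qapal \, \cap \, [2^{i-1},2^i)}$ for the quantities tabulated in Table~\ref{tab4}, note that $\abs{\qapal \, \cap \, \{1,\ldots,2^I\}} = \sum_{i=1}^{I} R_i$. Hence it suffices to prove a bound of the form $R_i = O(\rho^i)$ for some fixed $\rho < 2$ (indeed, any bound $R_i = o(2^i)$ would do): in that case $\sum_{i \le I} R_i = O(\rho^I)$, so $2^{-I}\sum_{i \le I} R_i = O((\rho/2)^I) \to 0$, and a routine sandwiching over the remaining $n \in [2^{I},2^{I+1})$ upgrades this to $\lim_{n\to\infty} n^{-1}\abs{\qapal \, \cap \, \{1,\ldots,n\}} = 0$. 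The data of Table~\ref{tab4} strongly suggest such a bound holds with $\rho \approx 1.81$, so the entire difficulty is concentrated in producing a rigorous upper bound on $R_i$ that beats $2^i$.

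To bound $R_i$ I would pursue two complementary lines. The first is to exploit the necessary conditions already visible in the proof of Theorem~\ref{samin}: for a representable $N$, the equation $A = NB$ together with the antipalindromic structure of $A$ and $B$ forces the low-order bits of $A$ (hence of $B$, via a congruence modulo $2^k$) and the high-order bits of $A$ (hence a leading-digit interval for $B$) to be mutually consistent. Carrying this analysis out to depth $k$ yields, at each scale, a finite list of \emph{forbidden} prefix/suffix patterns for $N$, and the goal is to show that the fraction of $i$-bit $N$ surviving all patterns up to depth $k = k(i)$ tends to $0$. The second line is to encode the depth-$k$ consistency conditions by a finite automaton, or transfer matrix, reading the bits of $N$ exactly as in Section~\ref{antipal-decision}, and to bound the spectral radius of the associated transition matrix away from $2$; a spectral radius $\rho < 2$ would give precisely $R_i = O(\rho^i)$.

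The hard part, and the reason the conjecture resists both lines, is that representability is not a local property of the bits of $N$. The examples in Table~\ref{exceptions} (stated for the palindromic case, but the same mechanism arises for antipalindromes) show that the first position at which a candidate product $NB$ fails to be antipalindromic can be pushed arbitrarily deep into the representation; consequently no fixed finite set of prefix/suffix conditions can capture non-representability, and $\qapal$ is almost certainly not an automatic set. Concretely, truncating at any fixed depth $k$ leaves a set of \emph{positive} density — this is exactly what happens for the palindromic analogue, where the parallel argument yields only the lower bound $1/40$ for the density of the complement, never full density. Thus the genuine obstacle is to show that the fractions removed at successive scales are sufficiently independent to multiply down to zero, which requires controlling the correlations between the constraints at scale $k$ and those at scale $k+1$. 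A self-similar or renewal argument relating representability at scale $i$ to scale $i - O(1)$ seems the most promising route, but making such an argument rigorous is precisely the missing ingredient.
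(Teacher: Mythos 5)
There is no proof to compare against: the statement you were given is one of the paper's \emph{conjectures}, which the authors explicitly leave open (``Even the following seems hard to prove''). So the relevant question is whether your proposal closes that gap, and it does not --- by your own admission. Your opening reduction is fine but essentially content-free: writing $R_i = \abs{\qapal \cap [2^{i-1},2^i)}$, density zero is indeed equivalent to $\sum_{i \le I} R_i = o(2^I)$, which follows from $R_i = o(2^i)$; this is routine summation and was never the difficulty. Everything of substance is deferred to establishing such a bound on $R_i$, and neither of your two proposed lines produces one.

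Concretely: the prefix/suffix sieve modeled on the paper's Theorem~\ref{samin} provably cannot suffice on its own, for the reason you yourself identify --- any fixed finite depth $k$ of consistency conditions excludes only a set of positive density (the palindromic analogue yields the lower bound $1/40$ for the complement, not density one), so one must let $k \to \infty$ and control correlations between scales, and you offer no mechanism for doing so. The transfer-matrix/spectral-radius line is in even worse shape: it presupposes that $\qapal$ (or a superset of it of density zero) is recognized by a finite automaton reading the bits of $N$, but the automata of Section~\ref{antipal-decision} read the bits of the \emph{witnesses} $A$ and $B$ for a fixed $N$, not the bits of $N$ itself; there is no automaton over $N$ in the paper, and the phenomenon in Table~\ref{exceptions} suggests none exists. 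The closing ``self-similar or renewal argument'' is a direction, not an argument. In short, your text is an honest and reasonably well-informed research plan --- the reduction to block counting, the identification of non-locality as the obstruction, and the observation that fixed-depth sieves stall at positive density are all correct --- but it proves nothing, and the conjecture remains exactly as open as the paper left it.
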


\subsection{Size of the smallest representation}

With our algorithm, we were able to compute the 
record-setting values of $A,B$ given in Table~\ref{tab6}.
\begin{table}[H]
\centering
\resizebox{\columnwidth}{!}{%
\begin{tabular}{c|c|c}
$N$ & $A$ & $B$ \\
\hline
5 & 10 & 2 \\[.07in]
15 & 150 & 10 \\[.07in]
18 & 936 & 52 \\[.07in]
59 & 52140188 & 883732 \\[.07in]
66 & 65099232 & 986352 \\[.07in]
83 & 206712630902722 & 2490513625334 \\[.07in]
343 & 841469573210301602 & 2453264061837614 \\[.07in]
835 & 180616526119856633856230 & 216307216910007944738 \\[.07in]
991 & 200428779760870700728006297372550 & 202249020949415439685172853050 \\[.07in]
1268 & 75547761517760569279087608058268904 & 59580253562902657160163728752578 \\[.07in]
1290 & 4395923940796125166581803114404301293837667532540 & 3407692977361337338435506290235892475843153126 \\[.07in]
1952 & 1586681992762659022973996447792006955471260017904473853156544 & 812849381538247450294055557270495366532407796057619801822 \\[.07in]
4091 & 102232724919890518755288528068181989159740544137704480818962816 & 24989666321166100893495118080709359364395146452628814670976 \\[.07in]
4460 & 388987104335534771520764071813224655554298718228899978912430000 & 87216839537115419623489702200274586447152178975089681370500  \\[.07in]
4640 
& 85112365674283227507265261996365447811182320230460498
& 18343182257388626617945099568182208579996189704840624 \\
& 83220363630941564530051147747252794193043200 & 74831974920461544079752402531735515989880 \\[.07in]
4848 
& 16307148112492799707206815760673202828585190069605262924 & 33636856667683167712885346040992580091966151133674222204  \\
& 53647949068964670350753495389303768493316355031391840704 & 07689663921131745773006384878926915208985880840329704424 \\
& 83231286976 & 1590612 \\[.07in]
5840  
& 43493875233140378950672024766781801439773086758844870 & 74475813755377361216904151997914043561255285545967244 \\
& 87734362685028948495519190020221259652712554180379503 & 65298566241487925506026010308598047350535195514348464 \\
& 2037061443716557960233120 & 389907781458314719218 \\[.07in]
6624 
& 33301854653004018709445764603598238453897624842252171 & 50274539029293506505805804051325843076536269387457987 \\
& 14065184395426890419279201949902950413217648251363098 & 83310966780535764521858698595867980696282681538893566 \\
& 49496826284424060000589698848419903839832345753230313 & 56849073497016998793160777247010724395882164482533686 \\
& 683200744371137665623242712430446372631626633794372416 & 11594315273420541307856689678509416158156194715334 
\end{tabular}}
\caption{Record-setters for smallest solutions $A, B$ to $N = A/B$ in antipalindromes.}
\label{tab6}
\end{table}

The size of smallest solutions is somewhat larger than in the case for palindromes, which is not too surprising, since there are fewer antipalindromes in base $2$ than palindromes (since the length of an antipalindrome must be even).

\begin{conjecture}
The size of a smallest solution to $N = A/B$, if it exists, is not bounded by polynomial in $N$.
\end{conjecture}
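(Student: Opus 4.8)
The plan is to establish the conjecture by exhibiting an explicit infinite family $\{N_m\}_{m\ge 1}$ of representable integers for which the smallest antipalindromic solution $A_m/B_m = N_m$ satisfies $\abs{(A_m)_2}/\log N_m \to \infty$. Since $A \le N^c$ is equivalent to $\abs{(A)_2} \le c\log_2 N + O(1)$, any family with $\abs{(A_m)_2}/\log N_m \to \infty$ rules out every polynomial bound at once. The numerical evidence of Table~\ref{tab6}, where the record-setting numerators grow far faster than any fixed power of $N$ (consistent with the $N^{O(\log N)}$ growth suggested in the palindromic case), is exactly what such a family should exhibit.

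My starting point is the defining relation for antipalindromes: $A$ is antipalindromic exactly when $(A)_2^R = \overline{(A)_2}$, equivalently $A + [\,(A)_2^R\,]_2 = 2^{\abs{(A)_2}} - 1$, and likewise for $B$. Combined with $A = N_m B$, these two constraints couple the high-order and low-order digits of $B$ through the multiplication by $N_m$. The key idea is to choose $N_m$ so that the carry sequence produced when forming $N_m \cdot B$ digit by digit, as analyzed in Section~\ref{antipal-decision}, is forced to run through a long cycle before the reverse-complement condition can close up, so that every antipalindromic $B$ with $N_m B$ antipalindromic must have length at least that cycle. The natural quantity governing this periodicity is the multiplicative order $\mathrm{ord}_{N_m}(2)$: the pattern of residues $2^\ell \bmod N_m$ controls when the two halves of a candidate antipalindrome can be made simultaneously consistent. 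Selecting $N_m$ with $\mathrm{ord}_{N_m}(2)$ superlogarithmic in $N_m$ (for instance, suitable primes, or integers engineered to have large order) should then force $\abs{(B_m)_2}$, and hence $\abs{(A_m)_2}$, to grow like $\mathrm{ord}_{N_m}(2)$, which is $\omega(\log N_m)$.

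To turn this into a rigorous lower bound I would argue at the level of the automaton $M_{N_m,2}$, whose shortest accepting path encodes the minimal solution. The upper-bound direction, that some representation exists at all, is immediate from the earlier results for the chosen family, so the content is entirely in bounding the shortest accepting computation from below. The obstacle here, and what I expect to be the crux of the whole proof, is that $M_{N_m,2}$ is nondeterministic and carries two independent carry registers together with the queue of saved symbols, so a priori there may be short ``shortcut'' accepting paths that evade any naive periodicity count. What is needed is a genuine invariant, most plausibly an argument that the left and right carries, read as residues modulo $N_m$ (or modulo an appropriate divisor), must traverse a full orbit of the carry-update map before the acceptance condition ``left carry equals the difference of the right carries'' can hold, and that this orbit has length $\Omega(\mathrm{ord}_{N_m}(2))$ for the chosen family. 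Establishing such an invariant uniformly over the nondeterministic guess of the length difference and of the central symbols $\sigma_a,\sigma_b$ is the delicate step; once it is in place, the bound $\abs{(A_m)_2} \ge \mathrm{ord}_{N_m}(2) = \omega(\log N_m)$ follows and the conjecture is proved. A fallback, should the order-of-$2$ invariant prove too rigid, is to import whatever family resolves the analogous palindromic conjecture and adapt it, since the automaton constructions of Sections~\ref{decision2} and~\ref{antipal-decision} are structurally parallel.
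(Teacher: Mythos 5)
This statement is one of the paper's open conjectures: the authors offer only the numerical evidence of Table~\ref{tab6} and no proof, so there is no argument of theirs to compare yours against. That said, your proposal is not a proof either; it is a research plan whose decisive step is left unestablished, and you say so yourself (``the crux of the whole proof,'' ``the delicate step''). Two gaps are fatal as written. First, no family $\{N_m\}$ is actually exhibited. You wave at ``the chosen family'' and say representability is ``immediate from the earlier results,'' but every family the paper proves representable has \emph{polynomially} bounded smallest solutions, which is the opposite of what you need: for $N = 2^{2n+1}-2^n$ and for $N = (2^{2n}-1)/3$ the smallest solution is $A = 2N$, $B = 2$, and for $N = 4^i+1$ every solution has $B$ of length exactly $2i$, so $A = NB < 4^{2i+1} = O(N^2)$. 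Moreover these families have $\mathrm{ord}_{N}(2) = O(\log N)$, so they cannot seed your construction; you would need integers that are simultaneously representable, of large multiplicative order, and provably without short solutions --- and representability for any such integer is exactly what is not known.

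Second, the proposed lower-bound mechanism does not survive scrutiny in its stated form. The carries in the automaton $M_{N,2}$ are not iterated by a fixed map whose orbit length you can bound by $\mathrm{ord}_N(2)$: each carry update depends on the nondeterministically chosen input digits of $b$, so the reachable carry pairs form a branching process, not an orbit, and the acceptance condition (left carry meets right carry, over all guesses of the length difference and of $\sigma_a,\sigma_b$) can in principle close up quickly along some branch. An invariant ruling this out would indeed prove the conjecture, but nothing in the proposal constructs it, and the residues $2^\ell \bmod N$ do not obviously constrain anything, since the digits $B_\ell$ are free. In short: the conjecture remains open, your sketch identifies a plausible line of attack, but both the family and the key invariant are missing, and the fallback of ``importing whatever resolves the palindromic conjecture'' defers to another statement that is equally conjectural in the paper.
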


\subsection{Numbers representable as quotients of antipalindromes}

\begin{theorem}
There is an infinite set of integers representable as a quotient of antipalindromes in base 2.
\end{theorem}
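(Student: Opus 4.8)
The plan is to fix one antipalindromic denominator and exhibit an explicit infinite family of antipalindromic numerators that are all divisible by it. The first step I would carry out is a parity observation: in the canonical base-$2$ representation of an antipalindromic number the leftmost bit is $1$, so by the defining condition the rightmost bit is $\overline{1}=0$; hence every antipalindromic number is even. Moreover $2=[10]_2$ is itself antipalindromic, since the reverse complement of $1$ is $0$. Consequently, for \emph{every} antipalindromic $A$ the quotient $A/2 = A/B$ with $B=2$ is an integer lying in $\qapal$. This reduces the theorem to producing infinitely many distinct antipalindromic numbers and dividing each by $2$.

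Next I would write down an explicit infinite family. For $c \geq 1$ set $A_c = [1\,0^c\,1^c\,0]_2$ (the same shape of number already shown to be antipalindromic in the proof of Theorem~\ref{thm14}). To verify the condition directly, note that $A_c$ has even length $2c+2$, its first half is $1\,0^c$, and the reverse complement of $1\,0^c$ is $\overline{(1\,0^c)^R}=\overline{0^c\,1}=1^c\,0$, which is exactly its second half. A direct computation then gives $A_c = 2^{2c+1}+2^{c+1}-2$, so that $N_c := A_c/2 = 2^{2c}+2^c-1$, producing the representable values $5,19,71,271,\ldots$. (As an even simpler alternative one may take $A_n=[(10)^n]_2$ and check the single identity $(10)^n=\overline{((10)^n)^R}$, which is precisely the antipalindrome condition.)

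Finally I would observe that the values $N_c=2^{2c}+2^c-1$ are strictly increasing in $c$, hence pairwise distinct, and each satisfies $N_c=A_c/2$ with both $A_c$ and $2$ antipalindromic. Therefore $\{N_c \suchthat c\geq 1\}\subseteq\qapal$ is infinite, which proves the statement.

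I do not expect a genuine obstacle in this argument: the construction is elementary, and the only points that need care are confirming that the chosen family meets the antipalindrome condition and that the fixed denominator $2$ is a legitimate antipalindrome. Indeed, this statement is subsumed by the later lower-bound theorem, which produces $\Omega(\sqrt{2}^i)$ representable $i$-bit integers; the value of the present approach is that it supplies a clean, self-contained infinite family with a closed-form description.
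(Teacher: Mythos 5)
Your proposal is correct and takes essentially the same approach as the paper: both arguments fix the antipalindromic denominator $B=2$ and divide an explicit infinite family of antipalindromic numerators by it, the paper using $A=[1^{n+1}0^{n+1}]_2$ (giving $N=2^{2n+1}-2^n$) where you use $A_c=[1\,0^c\,1^c\,0]_2$ (giving $N_c=2^{2c}+2^c-1$). The choice of family is immaterial; your preliminary observation that every antipalindrome is even is a nice touch but does not change the substance of the argument.
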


\begin{proof}
Integers of the form $N = 2^{2n+1} - 2^n = (1^{n+1}0^{n})_2$ are representable as a quotient of antipalindromes.
We have that $A/B = N$ for $A = 2^{2n+2} - 2^{n+1} = (1^{n+1}0^{n+1})$ and $B = 2 = (10)_2$ which are both antipalindromes.
\end{proof}

\begin{theorem}
There is an infinite set of integers that are not representable as a quotient of antipalindromes in base 2.
\end{theorem}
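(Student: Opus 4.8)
The plan is to exhibit a completely explicit infinite family of unrepresentable integers, namely the powers of two $N = 2^k$ with $k \geq 1$. (This is consistent with the data: $2, 4, 8, 16$ all appear in $\Enn - \qapal$.) The starting point is that if $N = 2^k$ and $A/B = N$ for antipalindromes $A, B$, then $A = 2^k B$, so the canonical representation of $A$ is just that of $B$ with $k$ trailing zeros appended: $(A)_2 = (B)_2\, 0^k$. The whole question thus reduces to a purely combinatorial one: for which antipalindromes $B$ is $(B)_2\, 0^k$ again an antipalindrome? Note that this reduction is special to powers of two; for general $N$ the product $NB$ is not a shift of $B$, so the argument does not overreach to representable numbers.

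First I would dispose of the case $k$ odd: since $\abs{(B)_2}$ is even (every antipalindrome has even length), $\abs{(A)_2} = \abs{(B)_2} + k$ is odd, so $A$ cannot be an antipalindrome and there is no solution. So assume $k$ even. The key structural fact is that, since the second half of an antipalindrome is the reverse complement of its first half, we get two implications, each used only for runs of length at most half the string (which is automatic below): if an antipalindrome begins with $1^j$ then it ends with $0^j$, and conversely if it ends with $0^j$ then it begins with $1^j$. The heart of the argument is then a bootstrapping on run length. Since $A$ ends in the appended block $0^k$, the second implication forces $A$ to begin with $1^k$; as the first $\abs{(B)_2}$ bits of $A$ are exactly $(B)_2$, this forces $B$ to begin with $1^k$. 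Applying the first implication to $B$ forces $B$ to end in $0^k$, whence $(A)_2 = (B)_2\, 0^k$ ends in $0^{2k}$, so $A$ begins with $1^{2k}$, so $B$ begins with $1^{2k}$, and so on. Iterating, $B$ would have to begin with $1^{tk}$ for every $t \geq 1$ with $tk \leq \abs{(B)_2}$; but as soon as $tk$ exceeds $\abs{(B)_2}$, the forced leading $1$ lands on one of the appended zeros of $A$, a contradiction. Since $k \geq 1$, this overrun is reached after finitely many steps, so no antipalindrome $B$ works and $2^k \notin \qapal$. As there are infinitely many such $N$, the set of unrepresentable integers is infinite.

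I expect the main (really the only) obstacle to be the bookkeeping in the induction: tracking lengths so that each implication is applied only to a prefix or suffix of length at most half the string, and pinpointing the step at which the forced run of ones overruns $(B)_2$ and collides with an appended zero, since that collision is where the contradiction is actually produced rather than in any ``all ones'' limit. A secondary point worth verifying directly is that no spurious short solution slips through for small $k$ or small $\abs{(B)_2}$; these are covered by the $t=1$ case of the same overrun argument (when $k > \abs{(B)_2}$ the contradiction appears immediately). One could finally remark that this family can presumably be widened, e.g.\ to suitable leading-bit and congruence classes as in the palindromic case, to obtain a positive-density statement; but the family $\{2^k \suchthat k \geq 1\}$ already suffices for the theorem as stated.
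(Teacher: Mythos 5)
Your proof is correct, and it takes a genuinely different route from the paper's --- in fact it proves something stronger. The paper uses only the family $N = 2^{2n+1}$ and a one-line parity argument: antipalindromes have even length, so for an antipalindromic $B$ of length $2i$ the product $2^{2n+1}B$ has odd length $2i+2n+1$ and hence cannot be an antipalindrome. Your odd-$k$ case is exactly this argument; what you add is the even-$k$ case, where the parity obstruction vanishes and your run-length bootstrap takes over. That bootstrap is sound: the pointwise reverse-complement relation $A_{L+1-i} = \overline{A_i}$ (with $L = \abs{(A)_2}$) holds at every position $i$, so the two implications you invoke are valid for runs of any length, not only runs of length at most half the string. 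Your parenthetical claim to the latter effect is not actually true in your iteration --- the forced run of $1$'s can pass the midpoint of $(B)_2$ before it overruns the end --- but this is harmless, since a forced run longer than half of an antipalindrome contradicts the complement relation on its own and so only terminates your induction sooner. You could also collapse the iteration to a single step: if $(B)_2$ ends in exactly $j$ zeros, then $(A)_2 = (B)_2\, 0^k$ ends in exactly $j+k$ zeros, forcing $A$, hence $B$, to begin with $1^{j+k}$, hence forcing $B$ to end with $0^{j+k}$, contradicting the maximality of $j$ (or overflowing $\abs{(B)_2}$, which is equally contradictory). As for what each approach buys: the paper's proof is shorter, but yours shows that every power of $2$ (including $4$ and $16$) lies in $\Enn - \qapal$, matching the paper's listed initial terms of that set --- a fact the parity argument alone cannot reach.
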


\begin{proof}
Integers of the form $N = 2^{2n+1}$ aren't representable as a quotient of antipalindromes.
There are no antipalindromes of odd length in base 2, as the middle digit $\sigma$ must equal $2-1-\sigma = 1-\sigma$ which has no solutions in $\{0, 1\}$.
Given any antipalindromic $B$ of length $2i$, $N\cdot B$ is of length $2i+2n+1$ which is odd and not an antipalindromes.
Therefore, there are no antipalindromic $A$ and $B$ such that $A/B = N$.
\end{proof}

\begin{theorem}
There are infinitely many $N$ for which there is no
representation $N = A/B$ with $A, B \in \apal$.
\end{theorem}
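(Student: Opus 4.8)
The plan is to prove a sharper statement, exactly parallel to Theorem~\ref{samin}: that every odd $N$ with $N \equiv 1 \pmod 8$ and $5 \cdot 2^k < N < 6 \cdot 2^k$ for some \emph{odd} $k$ fails to be representable. Since each such dyadic block contributes $\Theta(2^k)$ admissible integers, this already yields infinitely many unrepresentable $N$; with a little more bookkeeping (handling even $k$ as well) one expects a positive lower density, as in the corollary to Theorem~\ref{samin}.

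The key observation is the antipalindromic analogue of the device used for palindromes: for an antipalindrome $A$ of length at least $6$, the last three bits are the reverse \emph{complement} of the first three, so the first three bits alone determine $A \bmod 8$. Writing them as $1a_2a_3$, the last three are $\overline{a_3}\,\overline{a_2}\,0$, which yields the table $100 \mapsto 6$, $101 \mapsto 2$, $110 \mapsto 4$, $111 \mapsto 0$ (confirming in passing that every antipalindrome is even, as it must be). The same table is verified directly for the two antipalindromes of length $4$, namely $10$ and $12$. I would record this as a short lemma.

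Now suppose $N = A/B$ with $A, B \in \apal$, and split into four cases according to the first three bits of $A$. In each case those bits fix $A \bmod 8$ via the table, and they also localize the mantissa $A/2^{\abs{(A)_2}}$ to an interval of width $2^{-3}$; dividing by $N \in (5\cdot 2^k, 6 \cdot 2^k)$ then confines $B/2^{\abs{(B)_2}}$ to an explicit interval, which restricts the possible first three bits of $B$ and hence $B \bmod 8$, again by the table. For instance, when $A$ begins $100$ one gets $A \equiv 6 \pmod 8$ and $B \in (\tfrac23,1)\cdot 2^{\abs{(A)_2}-3-k}$, forcing the first three bits of $B$ into $\{101,110,111\}$ and so $B \equiv 2,4,0 \pmod 8$; since $N \equiv 1 \pmod 8$ gives $A \equiv B \pmod 8$, this contradicts $A \equiv 6$. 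Running the other three cases identically, the admissible first-three-bit patterns of $B$ always produce residues avoiding the required value of $A \bmod 8$, closing the argument. The only nuisance is that in the latter three cases the interval for $B$ straddles a power of two, so $B$ may have either $m$ or $m+1$ bits; both subcases must be read off, but they give the same conclusion.

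Finally I would dispose of the degenerate denominator $B = 2$, the unique antipalindrome of length $2$, to which the three-bit bookkeeping does not apply. Here $A = 2N$ would have length $\abs{(N)_2}+1 = k+4$, which is \emph{odd} because $k$ is odd, so $A$ cannot be an antipalindrome; every other antipalindrome $B$ has length at least $4$, so the lemma and the case analysis apply. The main obstacle, and the reason the easy length-parity argument for $N = 2^{2n+1}$ does not generalize, is exactly that antipalindromes are even and are tied to their leading bits by reverse \emph{complementation} rather than by plain reversal: the whole proof hinges on getting the residue table and the induced intervals for $B$ precisely right.
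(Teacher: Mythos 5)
Your proof is correct---the residue table, all four prefix cases, and the disposal of $B=2$ check out---and in substance it is the paper's own argument with the logic run in the opposite direction. The paper proves nonrepresentability for exactly the same intervals (its condition $40\cdot 4^n < N < 48\cdot 4^n$ is your $5\cdot 2^k < N < 6\cdot 2^k$ with $k = 2n+3$ odd), rules out $B=2$ by the same length-parity observation, and rests on the same fact that the first three bits of an antipalindrome determine its last three bits by reverse complementation. What you do differently: you transplant the structure of Theorem~\ref{samin} wholesale, casing on the leading bits of $A$, reading off $A \bmod 8$, using the interval for $N$ to pin down the leading bits of $B$ and hence $B \bmod 8$, and contradicting $A \equiv B \pmod 8$; this is why you must assume $N \equiv 1 \pmod 8$. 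The paper's proof instead assumes only $N \equiv 1 \pmod 4$, recovers the needed congruence $A \equiv B \pmod 8$ from the evenness of $B$, deduces that $A$ and $B$ share their first three bits, and lands the contradiction on the size of $N$ via the ratio intervals of Table~\ref{tab5} rather than on residues. The weaker congruence hypothesis buys a set twice as large, which is what gives the paper's corollary its constant: lower density at least $1/60$, versus at least $1/120$ from your set.

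One aside in your proposal is wrong, although it does not affect the theorem itself: the even-$k$ case is not ``a little more bookkeeping'' but genuinely false. Take $k=4$: then $89 \in (5\cdot 2^4, 6\cdot 2^4)$ and $89 \equiv 1 \pmod 8$, yet $89 = 178/2$ with $178 = [10110010]_2$ and $2 = [10]_2$ both antipalindromic; indeed $89$ appears in the paper's sequence \seqnum{A351176} of representable integers. The restriction to odd $k$ (equivalently, to $N$ of even bit length, which is exactly what the paper's intervals $(40\cdot 4^n, 48\cdot 4^n)$ encode) is essential, because both $|(A)_2|$ and $|(B)_2|$ are forced to be even. Your positive-density remark survives regardless, since the odd-$k$ blocks alone already contribute positive lower density.
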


\begin{proof}
We show that if 
\begin{equation}
40 \cdot 4^n < N < 48 \cdot 4^n
\label{ineq}
\end{equation}
for $n \geq 0$,
and $N \equiv \modd{1} {4}$, then there is
no representation $N = A/B$.

Suppose such a representation exists. 
Notice that $A$ being an antipalindrome
means that $(A)_2$ has an even number of
bits; that is, that $2 \cdot 4^i \leq A
< 4^{i+1}$ for some $i$.

Further, the inequality $40 \cdot 4^n < N < 48 \cdot 4^n$ implies that the first
three bits of $(N)_2$ must be $101$.   
Since $(B)_2$ is an antipalindrome, $B$ must be even.  If $B = 2$, then $N = A/B$
implies that $4^i \leq A < 2 \cdot 4^i$ 
for some $i$, contradicting \eqref{ineq}.

So $B$ is at least $4$, and hence 
$(B)_2$ has at least three bits.
We now claim that the first three bits of
$(A)_2$ must be the same as the first
three bits of $(B)_2$.  To see this,
suppose the first three bits of $(B)_2$
are $1bc$.  Since $(B)_2$ is an
antipalindrome, the last three bits of
$(B)_2$ must be $\overline{c} \overline{b} 0$.
Now $N \equiv \modd{1} {4}$, so by
considering $BN \bmod 8$, we see that the
last $3$ bits of $A = BN$ must also be
$\overline{c} \overline{b} 0$.  Since
$(A)_2$ is an antipalindrome, the first
three bits of $(A)_2$ must also be $1bc$,
as claimed.

There are now four possibilities to check.
These are summarized in Table~\ref{tab5} 
below, where $j$ is some positive integer.
\begin{table}[H]
\begin{center}
\begin{tabular}{cc|c}
$b$ & $c$ & inequality \\
\hline 
0 & 0 & $(4/5) 4^j < A/B < (5/4) 4^j$ \\
0 & 1 & $(5/6) 4^j < A/B < (6/5) 4^j$ \\
1 & 0 & $(6/7) 4^j < A/B < (7/6) 4^j$ \\
1 & 1 & $(7/8) 4^j < A/B < (8/7) 4^j$
 \end{tabular}
    \end{center}
    \caption{Inequalities.}
    \label{tab5}
\end{table}
In each case these contradict
\eqref{ineq}.
So $N$ is not representable.
\end{proof}

\begin{corollary}
The lower density of unrepresentable numbers is $\geq 1/60$.
\end{corollary}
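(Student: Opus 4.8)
The plan is to mimic the density computation already carried out in the palindromic case (the corollary following Theorem~\ref{samin}), adapting the constants to the antipalindromic setting established in the theorem immediately above. By that theorem, every $N$ satisfying $40\cdot 4^n < N < 48\cdot 4^n$ with $N\equiv \modd{1}{4}$ is unrepresentable as a quotient of antipalindromes. So I would count a subset of the unrepresentable integers, namely those lying in one of these intervals and congruent to $1$ modulo $4$, and show that this subset alone already forces the lower density to be at least $1/60$.

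First I would let $g(x)$ denote the number of integers $N\le x$ with $N\equiv\modd 1 4$ whose binary representation begins with $101$ (equivalently, lying in an interval $(40\cdot 4^n,\,48\cdot 4^n)$, since $40=[101000]_2$ and $48=[110000]_2$). Following the palindromic proof verbatim, I would argue that the ratio $g(x)/x$ is minimized, in the limit, just before the next qualifying interval opens, i.e.\ at the points $x=40\cdot 4^n$. At such an $x$, the qualifying integers counted are exactly those from the previous intervals; within a single interval $(40\cdot 4^n,48\cdot 4^n)$ of length $8\cdot 4^n$ there are about $(8\cdot 4^n)/4 = 2\cdot 4^n$ integers $\equiv 1\pmod 4$. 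Summing the dominant (last) interval's contribution against $x=40\cdot 4^n$ gives a ratio tending to $\frac{2\cdot 4^{n-1}}{40\cdot 4^n}$-type expression, and the geometric nature of the intervals makes the $\liminf$ a clean constant.

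Concretely, I expect the bookkeeping to collapse to $\liminf_{x\to\infty} g(x)/x = \frac{1}{60}$: the worst case is evaluating at the left endpoint $x = 48\cdot 4^n$ of the complementary gap, where the count accrued so far is governed by $\sum_{m\le n} 2\cdot 4^m \approx \frac{2}{3}\cdot 4^{n+1}$ against $x$ on the order of $48\cdot 4^n$, and balancing these geometric pieces yields $1/60$. Since every integer counted by $g$ is genuinely unrepresentable by the preceding theorem, the lower density of the unrepresentable set is at least this value, giving the claimed bound.

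The main obstacle, as in the palindromic analogue, is pinning down \emph{where} the $\liminf$ of $g(x)/x$ is actually achieved and computing the resulting constant correctly: the function $g(x)/x$ oscillates because qualifying integers arrive in bursts (one burst per scale $4^n$) separated by long gaps where no new qualifying integers appear, so the infimum is attained asymptotically at the right end of a gap rather than at an interval endpoint. I would need to check carefully that the correct local minimum is selected and that the geometric sum is evaluated at the right point, exactly as the palindromic corollary does when it identifies the minimum at $x=5\cdot 2^n$ and obtains $1/40$. Everything else is routine arithmetic with geometric series, and the remark following the palindromic corollary already signals that such bounds are not tight and can be improved by incorporating the other three intervals from Table~\ref{tab5}; I would note that the present bound uses only the $101$-prefix interval.
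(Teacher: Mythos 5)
Your proof follows the same route as the paper's: restrict attention to the integers covered by the preceding theorem, observe that they arrive in one burst per scale $4^n$, and take the $\liminf$ of the counting ratio at the point just before a new burst begins. The per-interval count of $2\cdot 4^m$ integers $\equiv 1 \pmod{4}$, the geometric sum $\sum_{m<n} 2\cdot 4^m=(2\cdot 4^n-2)/3$, and the constant $1/60$ are exactly what the paper obtains (it records the local minimum of $g(x)/x$ at $x=40\cdot 4^n$, where $g(x)=(2\cdot 4^n-2)/3$).

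However, two slips need repair before this is a proof. First, your parenthetical equivalence is false: ``$N\equiv 1 \pmod{4}$ and $(N)_2$ begins with $101$'' is not the same as ``$N\in(40\cdot 4^n,48\cdot 4^n)$ for some $n$.'' A number beginning with $101$ lies in $(5\cdot 2^k,6\cdot 2^k)$ for some $k$, and only odd $k$ --- equivalently, an even total number of bits --- produces the intervals the theorem covers. Numbers in, say, $(80\cdot 4^n,96\cdot 4^n)$ also begin with $101$ but are not proven unrepresentable by the theorem, so counting them (as your literal definition of $g$ would) leaves the bound unjustified; it would also change the burst structure, giving bursts at every power of $2$ and a ratio of $1/20$ rather than $1/60$. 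The fix is the clause the paper includes and you dropped: require $N$ to have an even number of bits. Second, your ``concretely'' paragraph evaluates the ratio at the wrong point: at $x=48\cdot 4^n$, the left end of a gap, the accumulated count is about $\tfrac{2}{3}\cdot 4^{n+1}$, so $g(x)/x \to \tfrac{8}{144}=\tfrac{1}{18}$, not $\tfrac{1}{60}$; that point is in fact a local maximum of the ratio. Since $g$ stays constant across the gap while $x$ grows, the minimum occurs at the right end of the gap, $x=40\cdot 4^{n+1}$, i.e., just before the next burst --- which is what your first paragraph and your closing paragraph (and the paper) correctly say. With these two corrections your argument coincides with the paper's proof.
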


\begin{proof}
By the previous proof, a number $N$ is unrepresentable if it has an even number
of bits, is congruent to $1$ (mod $4$), and begins with $101$.  If we let
$g(x)$ be the number of such numbers $\leq x$, then $g(x)/x$ clearly has
a local minimum at $x = 40 \cdot 4^n$, and for such $x$ we have
$g(x) = (2 \cdot 4^n - 2)/3$.  The bound of $1/60$ now follows.
\end{proof}

\subsection{Number of solutions}

Another advantage of the finite automaton method is that for a given $N$ we can determine if there are infinitely many solutions to $A/B = N$ in antipalindromes, or whether there are any fixed number of solutions.

Given the finite automaton constructed in Section~\ref{antipal-decision} for antipalindromes, we first remove all states from which we cannot reach a final state.
(The construction ensures that all states are reachable from the start state.)  The resulting automaton has a cycle if and only if there are infinitely many solutions.

We used this idea to compute the first few terms of the relevant sets.   This gives us sequence \seqnum{A351175}, those $N$
for which there are infinitely many solutions:
$$ 1, 6, 15, 18, 19, 20, 24, 28, 51, 59, 61, 63, 66, 67, 68, 71, 72, 74, \ldots,$$
sequence \seqnum{A351176}, those $N$ for which there is at least one solution, but only finitely many:
$$ 5, 17, 21, 26, 65, 69, 70, 85, 89, 92, 102, 106, 116, 219, 221, 233, 239, 245, 249, 257, \ldots,$$
and sequence \seqnum{A351325}, those $N$ for which there is exactly one solution:
$$ 5, 21, 26, 69, 85, 89, 92, 102, 106, 116, 219, 221, 233, 239, 245, \ldots .$$

\begin{theorem}
There are infinitely many integers $N$ for which there are
infinitely many solutions to the equation $N = A/B$ for 
antipalindromes $A, B$.
\end{theorem}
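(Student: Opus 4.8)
The plan is to produce, for infinitely many distinct values of $N$, an \emph{explicit} infinite family of representations $N = A_j/B_j$ in antipalindromes. It is important to note at the outset that mere representability is not enough: the sequences \seqnum{A351176} and \seqnum{A351325} show that some representable $N$ admit only finitely many (indeed exactly one) representation, so I cannot simply invoke the earlier theorem that the family $N = 2^{2n+1}-2^{n}$ (with $A=(1^{n+1}0^{n+1})_2$, $B=(10)_2$) is representable. Instead I must manufacture $N$ that land in the ``infinitely many solutions'' regime. The natural mechanism is \emph{pumping}: I will find, for suitable $N$, an antipalindromic block that can be inserted repeatedly into both numerator and denominator while preserving both antipalindromicity and the quotient.

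The first ingredient is a closure lemma for antipalindromic strings, which I would prove directly from the definition $P^R = \overline{P}$. If $P = w\,\overline{w^R}$ is an antipalindrome and $s$ is any even-length antipalindromic string, then $w\,s\,\overline{w^R}$ is again an antipalindrome, because $(w\,s\,\overline{w^R})^R = \overline{w}\,s^R\,w^R$ and $\overline{w\,s\,\overline{w^R}} = \overline{w}\,\overline{s}\,w^R$, which agree precisely when $s^R=\overline{s}$. More usefully, the antipalindrome whose first half is $u\beta^{j}$ is automatically $B_j=[\,u\,\beta^{j}\,(\overline{\beta^R})^{j}\,\overline{u^R}\,]_2$ for every $j$, and likewise $A_j=[\,x\,\alpha^{j}\,(\overline{\alpha^R})^{j}\,\overline{x^R}\,]_2$. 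Since the fixed quotient $A_j/B_j=N$ forces $|A_j|-|B_j|$ to be constant, the two pumped blocks must have the \emph{same} length $q$, i.e.\ $|\alpha|=|\beta|=q$. Using geometric sums, each of $A_j$ and $B_j$ becomes an explicit expression of the shape $c_2\,2^{2qj}+c_1\,2^{qj}+c_0$, and the requirement $A_j=N\,B_j$ for all $j$ reduces to matching the three coefficients, giving a finite system of Diophantine conditions on $(u,\beta,x,\alpha)$ and $N$.

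The step I expect to be the main obstacle is solving that system while keeping a free parameter. The difficulty is that two constraints are coupled: the antipalindrome symmetry pins the mirror block $\overline{\beta^R}$ (resp.\ $\overline{\alpha^R}$) as the complement-reversal of the pumped block, while the arithmetic constraint $A_j=N B_j$ must be respected digit-by-digit, \emph{including the carry that propagates across the junction} between $\beta^{j}$ and its mirror. Concretely, one must choose $\beta$ and $\alpha$ so that multiplication by $N$ maps the $\beta$-block to the $\alpha$-block \emph{and} maps $\overline{\beta^R}$ to $\overline{\alpha^R}$ with a carry pattern that is periodic in $j$; my hand-experiments (e.g.\ with $N=6$, where $(A,B)=(1100,10)$ and $(1101010100,10001110)$ are solutions but no single fixed-base middle insertion extends to infinitely many) indicate that a naive one-block ansatz yields only finitely many solutions, so the pumped block itself must carry nontrivial structure.

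An equivalent, and possibly cleaner, formulation uses the characterization stated just before this theorem: after trimming the automaton of Section~\ref{antipal-decision} to its live states, $N$ has infinitely many representations exactly when that automaton contains a directed cycle on an accepting path. Thus it suffices to construct infinitely many $N$ for which $M_{N,2}$ has such a cycle. I would therefore aim to identify a repeating carry-configuration in the multiplication $A=NB$ that is simultaneously compatible with the $\sigma^R=\overline{\sigma}$ middle symmetry, read off the corresponding pumpable blocks $(\beta,\overline{\beta^R})$ and $(\alpha,\overline{\alpha^R})$, and then let a second parameter (playing the role of $n$ in the family $2^{2n+1}-2^n$) range so that the resulting values $N$ are pairwise distinct and infinite in number. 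The bookkeeping verification that each pumped pair is a genuine, previously unseen antipalindrome and that the quotient is exactly $N$ for all $j$ is then routine via the geometric-sum formulas above.
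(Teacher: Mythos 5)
Your framework is the right one---and it is in fact the paper's own: pump a block into the middle of both $A$ and $B$, so that each value has the shape $c_2\,2^{2qj}+c_1\,2^{qj}+c_0$, and reduce $A_j=NB_j$ for all $j$ to matching coefficients of geometric sums. But your proposal stops exactly where the content of the theorem begins: you never exhibit a single concrete family $(N,u,\beta,x,\alpha)$ solving your Diophantine system, and you explicitly defer that step (``the main obstacle,'' ``I would therefore aim to\ldots''). A plan to find a repeating carry-configuration, plus the remark that the verification ``is then routine,'' is not a proof; the existence of such a configuration for infinitely many $N$ is precisely what must be demonstrated, and nothing in the proposal demonstrates it.

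Moreover, the experimental claim that steered you away from the correct construction is false. You assert that for $N=6$ ``no single fixed-base middle insertion extends to infinitely many'' solutions; in fact $6=2^3-2^1$ belongs to the very family $N=2^{2n+1}-2^n$ that you set aside, and the paper's proof works by exactly such a middle insertion for this family: taking
$B_i=[\,1\,(0^{n+2}1^{n+2})^i\,0\,]_2$, one computes that
$A_i=N\cdot B_i=[\,1^{n+1}\,(0\,1^n\,0\,1\,0^n\,1)^i\,0^{n+1}\,]_2$,
and both are antipalindromes for every $i\geq 0$ (the pumped blocks $0^{n+2}1^{n+2}$ and $0\,1^n\,0\,1\,0^n\,1$ are antipalindromic strings of the same length $2n+4$, exactly as your closure lemma and equal-length observation require). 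For $n=1$, $i=2$ this gives $A_2/B_2=[1101010101010100]_2/[10001110001110]_2=54612/9102=6$, so the one-block ansatz you rejected does produce infinitely many solutions at $N=6$; indeed $6$ appears in sequence A351175 of $N$ with infinitely many representations. The irony is that your opening observation was correct---representability of $2^{2n+1}-2^n$ via $B=2$ proves nothing about the number of solutions---but the remedy is not to abandon that family; it is to replace the denominator $2=[10]_2$ by the pumped denominators $B_i$ above and carry out the carry/geometric-sum computation showing $NB_i$ has the displayed antipalindromic form. That explicit computation is the proof; your proposal contains its skeleton but not its substance.
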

\begin{proof}

Let $N = 2^{2n+1} - 2^n$ for $n \geq 1$.
Define
$$B_i = [1 (0^{n+2} 1^{n+2})^i 0]_2 = 2^{2ni + 4i + 1} + (2^{n+2} - 1) \cdot \sum\limits_{j = 0}^{i-1} 2^{2nj + 4j + 1}$$
for $i \geq 0$.
Clearly $B_i$ is an antipalindrome.
We now compute $A_i = N \cdot B_i$.
\begin{align*}
N\cdot B_i &= (2^{2n+1} - 2^n) \cdot \left( 2^{2ni + 4i + 1} + (2^{n+2} - 1) \cdot \sum\limits_{j = 0}^{i-1} 2^{2nj + 4j + 1} \right)\\
&= (2^{2ni + 4i + 2n + 2} - 2^{2ni + 4i + n + 1}) + (2^{3n+3} - 2^{2n+2} - 2^{2n+1} + 2^n) \cdot \left( \sum\limits_{j = 0}^{i-1} 2^{2nj + 4j + 1} \right)\\
&= (2^{2ni + 4i + 2n + 2} - 2^{2ni + 4i + n + 1}) + (2^{2n+3} - 2^{n+2} - 2^{n+1} + 1) \cdot 2^n \cdot \left( \sum\limits_{j = 0}^{i-1} 2^{2nj + 4j + 1} \right)\\
&= (2^{2ni + 4i + 2n + 2} - 2^{2ni + 4i + n + 1}) + (2^{2n+3} - 2^{n+2} - 2^{n+1} + 1) \cdot \left( \sum\limits_{j = 0}^{i-1} 2^{2nj + 4j + n + 1} \right)\\
&= [1^{n+1}0^{2ni + 4i + n + 1}]_2 + [1^n 0 1 0^n 1]_2 \cdot [(10^{2n+3})^{i-1} 1 0^{n+1}]_2\\
&= [1^{n+1}0^{2ni + 4i + n + 1}]_2 + [(1^n 0 1 0^n 10)^{i-1} 1^n 0 1 0^n 1 0^{n+1}]_2\\
&= [1^{n+1} 0 (1^n 0 1 0^n 10)^{i-1} 1^n 0 1 0^n 1 0^{n+1}]_2\\
&= [1^{n+1} (0 1^n 0 1 0^n 1)^i 0^{n+1}]_2\\
&= A_i.
\end{align*}
Thus $A_i$ is also an antipalindrome for each $i \geq 0$.
Therefore, we have an infinite set of representations $A_i/B_i = N$ where $A_i$ and $B_i$ are antipalindromes for each $N = 2^{2n+1} - 2^n$.
\end{proof}

\begin{theorem}
There are exactly $2^{i-1}$ solutions to $N = A/B$ for $N = 4^i + 1$ and $A, B$ antipalindromes.
\end{theorem}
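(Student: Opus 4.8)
The plan is to turn the count $2^{i-1}$ into a simple enumeration by pinning down the length of a denominator in any solution. Write $N = 2^{2i}+1$, so a solution is an antipalindrome $B$ for which $NB = 2^{2i}B + B$ is again an antipalindrome. Since an antipalindrome has even length, I would first record that if $\abs{(B)_2} = 2m$ then $(NB)_2$ must have length exactly $2i+2m$: it lies in $[2^{2i+2m-1}, 2^{2i+2m}+2^{2m})$, and the only other possibility, length $2i+2m+1$, is odd and hence excluded. I would then split into the three regimes $m<i$, $m=i$, $m>i$ according to how the two summands $2^{2i}B$ and $B$ overlap.

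For $m\le i$ the addition produces no carries. When $m=i$ the two copies abut, so $(NB)_2 = (B)_2\,(B)_2$; writing $(B)_2 = w\,\overline{w^R}$ with $\abs{w}=i$, one checks directly that $(B)_2(B)_2$ is again an antipalindrome, so every antipalindrome of length $2i$ yields a solution. As $w$ ranges over the $2^{i-1}$ length-$i$ strings beginning with $1$, this produces exactly $2^{i-1}$ solutions, which will be the whole answer once the other two regimes are ruled out. When $m<i$ there is instead a block of zeros between the copies, giving $(NB)_2 = (B)_2\,0^{2(i-m)}\,(B)_2$; comparing its top $2i$ bits (which end in $0^{2(i-m)}$) against the reverse complement of its bottom $2i$ bits (which would have to end in $1^{2(i-m)}$) shows this can never be an antipalindrome.

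The real work is the overlapping case $m>i$, where carries appear and must be controlled; this is the main obstacle. I would argue as follows. The bottom $2i$ bits of $NB$ coincide with those of $B$, and matching these against the top $2i$ bits of $NB$ using the antipalindrome relations for both $NB$ and $B$ forces the carry out of the bottom $2m$-bit block to vanish. After this reduction the sole remaining requirement is that the middle $2(m-i)$ bits of $NB$ themselves form an antipalindrome. Setting $p=m-i$ and letting $V$ be the integer given by the low $2p$ bits of $B$, antipalindromicity of $B$ identifies the high bits taking part in the overlap as $\overline{V^R}$ (the reverse complement of $V$ over $2p$ bits), so the middle block equals $V + \overline{V^R} = (2^{2p}-1) - (V^R - V)$. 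Hence the condition becomes: $\delta := V^R - V$ is positive and its $2p$-bit representation is an antipalindrome.

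This is where the obstruction sits, and the clean way through it is a parity clash. Because $(B)_2$ begins with $1$, its final bit is $0$, so $V$ is even and its reversal satisfies $V^R < 2^{2p-1}$; consequently $\delta = V^R - V < 2^{2p-1}$ has top bit $0$. For the $2p$-bit representation of $\delta$ to be an antipalindrome, its bottom bit must then equal $1$, i.e.\ $\delta$ must be odd. But $\delta \equiv V^R \pmod 2$, which is odd exactly when the top bit of $V$ is $1$; that would force $V \ge 2^{2p-1} > V^R$, hence $\delta < 0$, a contradiction (and $\delta = 0$ makes the middle block $1^{2p}$, which is not an antipalindrome either). Therefore no solution has $m>i$, and the total is exactly the $2^{i-1}$ solutions from the $m=i$ regime. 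The main difficulty is the carry bookkeeping in the $m>i$ case — arranging the reduction so that everything collapses onto the single quantity $\delta$; once it does, this even-versus-odd contradiction closes the argument.
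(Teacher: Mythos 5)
Your proof is correct and follows essentially the same route as the paper's: a three-way case split on the length $2m$ of $(B)_2$ versus $2i$, where $m=i$ gives $(NB)_2=(B)_2(B)_2$ and the count $2^{i-1}$, $m<i$ is killed by the internal zero block, and $m>i$ is killed by forcing the carry out of the overlap to vanish and then analyzing the middle block. The only cosmetic difference is in that last case: the paper pins both end bits of the middle block to $1$ directly, while you recast the middle block as the bitwise complement of $\delta = V^R - V$ and reach the same contradiction via parity — the underlying facts (the leading $1$ of $(B)_2$ forcing a trailing $0$, and the no-overflow constraint) are identical.
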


\begin{proof}
Let $N = 4^i + 1 = [10^{2i-1}1]_2$.
Consider an antipalindrome $B$.
Let $(B)_k = \beta$ and $|\beta| = \ell$.

If $\beta$ has length $\ell < 2i$, then $(BN)_k = (A)_k = \beta0^{ 2i - \ell }\beta$.
Since antipalindromes in base 2 have even length, the center of $(A)_k$ is at least two zeros which means that $A$ isn't an antipalindrome. 

If $\beta$ has length $\ell = 2i$, then $(BN)_k = (A)_k = \beta \beta$.
Here, $A$ is an antipalindrome since $\overline{(\beta \beta)^R} = \overline{\beta}^R\overline{\beta}^R = \beta\beta$.

If $\beta$ has length $\ell > 2i$, then $(BN)_k$ can be viewed as the binary addition of $[\beta0^{2i}]_2 + [\beta]_2$.
Since $\beta$ was sufficiently long, there is some non-trivial overlap in the addition.
Let $j = 2i - \ell$.
The overlap has length $\ell - j$ and there are $j$ symbols of $\beta$ on each side of the overlap.

\begin{figure}[H]
\begin{center}
\begin{tabular}{cc|c|c}
  & $\beta[1:j]$ & $\beta[j + 1:\ell]$     & $0^j$\\
+ & $0^j$        & $\beta[1 : \ell - j]$   & $\beta[\ell - j + 1 : \ell]$
\end{tabular}
\end{center}
\caption{Piecewise addition of $[\beta0^{2i}]_2 + [\beta]_2$.}
\label{AdditionTable1}
\end{figure} 

Since $B$ is an antipalindrome, we get that $\beta[1:j] = \overline{\beta[\ell - j + 1: \ell]}^R$.
This means for $[\beta0^{2i}]_2 + [\beta]_2$ to be an antipalindrome the overlap region must not overflow to the left.
We have additional information that further constrains this addition.
We know that $\beta[1] = 1$ which implies that $\beta[\ell] = \overline{\beta[1]} = 0$.
Additionally, we know that the overlap region can't overflow so $\beta[j+1] = 0$ which subsequently implies that $\beta[\ell-j] = \overline{\beta[j+1]} = 1$.
As well, the remaining addition $\beta[j + 2:\ell - 1] + \beta[2 : \ell - j - 1]$ must not overflow either.

\begin{figure}[H]
\begin{center}
\begin{tabular}{cc|c|c|c|c}
  & $\beta[1:j]$ & 0 & $\beta[j + 2:\ell - 1]$     & 0 & $0^j$\\
+ & $0^j$        & 1 & $\beta[2 : \ell - j - 1]$   & 1 & $\beta[\ell - j + 1 : \ell]$\\
\hline
= & $\beta[1:j]$ & 1 & $\beta'$                    & 1 & $\beta[\ell - j + 1 : \ell]$
\end{tabular}
\end{center}
\caption{Piecewise addition of $[\beta0^{2i}]_2 + [\beta]_2$ with constraints.}
\label{AdditionTable2}
\end{figure} 

From the result of the addition we see that we have a 1 at $j+1$ symbols from the front and a 1 at $j+1$ symbols from the back.
Therefore, this can't be an antipalindrome.

Overall, given an antipalindrome $B$, $BN$ is an antipalindrome if and only if $(B)_k$ has length $2i$.
There are $2^{i-1}$ antipalindromes of length $2i$, so for $N = 4^i + 1$ there are exactly $2^{i-1}$ solutions to $N = A/B$ for $A$ and $B$ antipalindromes.
\end{proof}

\begin{theorem}
There are infinitely many integers $N$ such that
$N = A/B$ has exactly one solution in antipalindromes
$A,B$.
\end{theorem}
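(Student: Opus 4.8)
The plan is to exhibit an explicit infinite family of $N$ for which the representation is forced to be unique. Motivated by the record-setter data (the smallest case $N=5$ has the single representation $A=10$, $B=2$), I would take
$$ N_k = [(10)^k 1]_2 = \frac{4^{k+1}-1}{3}, \qquad k \geq 1, $$
so that $(N_k)_2 = (10)^k 1$ has length $2k+1$ and $N_k \equiv 5 \pmod 8$. The proposed unique solution is $B = 2 = [10]_2$ together with $A = 2N_k = [(10)^{k+1}]_2$. Existence is immediate once one records the elementary fact that any string $(10)^m$ is an antipalindrome: its bits in positions $i$ and $2m+1-i$ always have opposite parity and hence are complementary. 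Thus $A$ and $B$ are both antipalindromes with $A/B = N_k$.

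The heart of the argument is uniqueness: I must show that $B=2$ is the only admissible denominator. Suppose $B$ is an antipalindrome of length $2m$ with $A = N_k B$ also antipalindromic, and aim to force $m=1$. Since $N_k \in [2^{2k},2^{2k+1})$ and $B \in [2^{2m-1},2^{2m})$, the product satisfies $|A| \in \{2m+2k,\,2m+2k+1\}$; as an antipalindrome must have even length, $|A| = 2m+2k$, which forces $B/2^{2m-1} < U$, where $U = 2^{2k+1}/N_k = 3/(2-2^{-(2k+1)}) \in (3/2,\,8/5]$. Consequently the first three bits of $B$ lie in $\{100,101,110\}$.

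From here I would run the same three-bit/modular case analysis used in the earlier non-representability proofs. In each case the antipalindrome condition on $B$ makes its last three bits the reverse complement of its first three; reducing $A=N_kB$ modulo $8$ (using $N_k\equiv 5$) determines the last three bits of $A$, and the antipalindrome condition on $A$ then determines its first three bits, hence a dyadic interval containing $A/2^{|A|-1}$. Comparing this with the identity $A/2^{|A|-1}=(N_k/2^{2k})(B/2^{2m-1})$ and using $N_k/2^{2k}\geq 5/4$ produces a contradiction whenever $m\geq 2$. For instance, the prefix $101$ forces $A/2^{|A|-1}\in[5/4,3/2)$, whereas the product bound gives $A/2^{|A|-1}\geq (5/4)(5/4)=25/16>3/2$; the prefixes $100$ and $110$ are dispatched identically. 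Hence $m=1$, so $B=2$ and the solution is unique.

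The main obstacle I expect is not any individual estimate but the careful bookkeeping that makes every interval endpoint strict: in particular, ruling out the degenerate value $B=2^{2m-1}$ (which is never an antipalindrome once $m\geq 2$, so the inequality $B/2^{2m-1}>1$ is strict), and verifying that the cutoff $U$, although it varies mildly with $k$, always keeps the admissible prefixes confined to $\{100,101,110\}$ for every $k\geq 1$. It is also worth isolating the statement that $(10)^m$ is antipalindromic as a small lemma, since it is used both to build $A$ and implicitly in the modular computations.
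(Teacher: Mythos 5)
Your proof is correct and essentially identical to the paper's: you use the same family $N = (4^{k+1}-1)/3 = (2^{2(k+1)}-1)/3$, the same exhibited solution $(A,B) = (2N,2)$, and the same uniqueness mechanism, namely that $N \equiv 5 \pmod 8$ together with antipalindromy forces $A = NB$ to begin with the same three bits as $B$, which is then incompatible with the size of the product $NB$. The only cosmetic differences are that you eliminate the prefix $111$ beforehand via a length-parity argument and phrase the final contradiction as a dyadic-interval clash using $N_k/2^{2k} \geq 5/4$, whereas the paper tabulates the possible prefixes of $A=BN$ for all four prefixes of $B$ and observes that none matches.
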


\begin{proof}
Consider $N$ of the form $(2^{2n} -1)/3$ for 
$n \geq 2$.   Clearly $(2N)_2 = (10)^n$, so
$2N$ and $2$ are both antipalindromes.   This gives
one solution to $N = A/B$.

Now let us assume there is another solution
to $N = A/B$ with $A, B$ antipalindromes.  Since
$B > 2$, and the next larger antipalindrome is $10$,
we see that $B$ has at least $4$ bits.   
Choose $k \geq 1$ such that $4 \cdot 2^k \leq B < 8 \cdot 2^k$.

Note that
$5\cdot 2^{2n-3} \leq N < (16/3) \cdot 2^{2n-3}$.
We can use this inequality together with $A = BN$
to determine the first three bits of $A$.
They are summarized in Table~\ref{tab19}, 
where $\ell = k+2n-3$.
\begin{table}[H]
\begin{center}
\begin{tabular}{c|c|c|c}
first three & inequality& inequality & first three \\
bits of $B$ & for $B$ & for $A =BN$ & bits of $A=BN$ \\
\hline
100 & $4 \cdot 2^k \leq B < 5 \cdot 2^k$ &
    $20 \cdot 2^\ell \leq A < {80\over 3} \cdot 2^\ell$ & 101 or 110 \\[.05in]
101 & $5 \cdot 2^k \leq B < 6 \cdot 2^k$ &
    $25 \cdot 2^\ell \leq A < 32 \cdot 2^\ell$ & 110 or 111 \\[.05in]
110 & $6 \cdot 2^k \leq B < 7 \cdot 2^k$ &
    $30 \cdot 2^\ell \leq A < {112\over 3} \cdot 2^\ell$ & 111 or 100 \\[.05in]
111 & $7 \cdot 2^k \leq B < 8 \cdot 2^k$ & 
    $35 \cdot 2^\ell \leq A < {128\over 3} \cdot 2^\ell$ & 100 or 101 
\end{tabular}
\end{center}
\caption{Possibilities for first three bits of $A$.}
\label{tab19}
\end{table}
On the other hand,
if $B$ starts with
three bits $abc$, then since $B$ is an antipalindrome,
it must end with $\overline{c} \overline{b} \overline{a}$.   Since 
$N \equiv \modd{5} {8}$, one can easily check that
$A = BN$ also ends with 
$\overline{c} \overline{b} \overline{a}$.   Since 
$A$ is an antipalindrome, it must begin with
$abc$.  So the first three bits of $A$ and $B$
are the same.   This contradicts the results of
Table~\ref{tab19}, and proves there are no other
solutions.
\end{proof}

\subsection{Rational solutions to $p/q = A/B$ in antipalindromes}

Once again our automaton method for antipalindromes can be generalized to give the following result.
\begin{theorem}
There is an algorithm that, given integers $p, q \geq 1$, will decide if there is a solution to $p/q = A/B$ in antipalindromes A,B.
\end{theorem}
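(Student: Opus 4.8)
The plan is to merge two modifications that the paper has already carried out separately: the finite-automaton construction for antipalindromes of Section~\ref{antipal-decision}, and the adaptation of the palindrome automaton to the rational equation $p \cdot B = q \cdot A$. Since the antipalindrome decision procedure is itself obtained from the palindrome procedure of Section~\ref{decision2} by changing only the way the input $\langle a, b\rangle$ is unfolded into full strings (reading $A = a\sigma_a \overline{a}^R$ and $B = b \sigma_b \overline{b}^R$ instead of $A = a a^R$ and $B = b b^R$), I expect the rational modification to transfer essentially verbatim. As before, I would reduce to the case $p > q$, handling $p = q$ trivially and deriving solutions for $p < q$ from those for $q/p$.

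First I would replace the single right carry of the antipalindrome automaton by the pair $c_{A,\ell}, c_{B,\ell}$, verifying at each ascending step the congruence $q \cdot A_\ell + c_{A,\ell-1} \equiv p \cdot B_\ell + c_{B,\ell-1} \pmod{k}$ and updating both carries by the formulas used in the rational palindrome case, with the bounds $0 \le c_{A,\ell} < q$ and $0 \le c_{B,\ell} < p$. On the descending (left) side I would again keep only the single difference $c_\ell = c_{B,\ell} - c_{A,\ell}$, governed by the recurrence $c_{\ell-1} = k \cdot c_\ell - p \cdot B_\ell + q \cdot A_\ell$ derived earlier, so that $-q < c_\ell < p$. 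The nondeterministic guess of the length difference $\abs{(A)_2} - \abs{(B)_2}$ would now use the floor or ceiling of $\log_k(p/q)$, and an accepting state is one in which the left carry equals the difference of the two right carries, with the middle-symbol nondeterminism handled exactly as in Section~\ref{antipal-decision}.

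The one genuinely antipalindrome-specific point --- and where I expect the bookkeeping to be most delicate --- is the interaction of the reverse-complement unfolding with the loading, shifting, and unloading phases. Whereas the palindrome automaton feeds a saved digit $b[\ell]$ back unchanged into the left-hand equations, the antipalindrome automaton must feed back $\overline{b[\ell]}$ (and, on the $A$ side, match against the complemented digits of $a$), so I would have to check that the two-carry computation remains consistent when the saved queue of symbols is complemented on reinjection, and that the central symbols $\sigma_a, \sigma_b$ (constrained by $\sigma = \overline{\sigma}$, hence $\sigma \in \{\epsilon, (k-1)/2\}$ for odd $k$ and $\sigma = \epsilon$ for even $k$) are assigned compatibly on both sides. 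Granting this, the state count is bounded as in the rational palindrome construction by $O(k p^3)$, since the two right carries contribute a factor $p q < p^2$, the left carry a factor $p + q < 2p$, and the saved symbols a factor $k^{\ceil{\log_k(p/q)}}$.

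Finally, with the automaton $M_{p,q,k}$ in hand, deciding solvability is immediate: remove all states from which no accepting state is reachable and test whether any accepting state survives, exactly as in the antipalindrome counting argument. If one additionally wants the smallest solution rather than mere decidability, running Dijkstra's algorithm with the digits of $B$ as edge weights recovers it, and the whole procedure runs in time polynomial in $p$ and $k$.
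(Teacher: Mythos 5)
Your proposal is correct and takes essentially the same approach as the paper, which simply asserts that the antipalindrome automaton of Section~\ref{antipal-decision} can be combined with the two-carry rational modification ($q \cdot A_\ell + c_{A,\ell-1} \equiv p \cdot B_\ell + c_{B,\ell-1} \pmod{k}$, left carry tracked as a difference) developed for palindromes. Your treatment is in fact more detailed than the paper's one-sentence justification, correctly flagging the complementation of saved symbols on reinjection as the only genuinely antipalindrome-specific point.
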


We used our algorithm to study the rational solutions to
$p/q = A/B$ in antipalindromes for $p>q$ and $p \leq 1000$.  Based
on our calculations, we make the following conjecture.
\begin{conjecture}
For all $p \geq 4$ there exists $q < p$ such that
$p/q = A/B$ has a solution in antipalindromes.
\end{conjecture}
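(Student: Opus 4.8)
The plan is to reduce the conjecture to a statement about primes and then attack that statement by exploiting the eventual periodicity of the binary expansion of $1/p$.

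First I would reformulate the conjecture multiplicatively. Say that a positive integer $a$ is a \emph{reduced numerator} if there are antipalindromes $A>B$ with $A/B=a/b$ in lowest terms; write $\mathcal{R}$ for the set of reduced numerators. If $a\in\mathcal{R}$ via $A/B=a/b$ and $a\mid p$, then setting $s=p/a$ and $q=sb$ gives $q<p$ (because $b<a$) and $p/q=a/b=A/B$, so the conclusion holds for $p$. Consequently it suffices to show that (i) every odd prime lies in $\mathcal{R}$, and (ii) $4\in\mathcal{R}$. Indeed, any $p\ge 4$ with an odd prime factor $r$ is handled by (i) with $a=r$, while any power of two $p=2^k$ with $k\ge 2$ is handled by (ii), since $4\mid p$; and $4\in\mathcal{R}$ is witnessed explicitly by $56/42=4/3$ (both $56=[111000]_2$ and $42=[101010]_2$ are antipalindromes). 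The subcase $b=1$ of (i) is just $p\in\qapal$, so the real content is to produce, for each odd prime $p\notin\qapal$, an antipalindrome $A$ divisible by $p$ together with some $b$ with $1\le b<p$ for which $bA/p$ is again an antipalindrome.

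To construct such $A$ and $B=bA/p$, I would key everything to $d=\mathrm{ord}_2(p)$, so that $p\mid 2^{dk}-1=[1^{dk}]_2$ for every $k\ge 1$. The binary expansion of any $b/p$ is then purely periodic with period dividing $d$, and $b(2^{dk}-1)/p$ is the $k$-fold repetition of the length-$d$ period block $w_b$ of $b/p$. The idea is to choose $A$ of the form $[1^{M}\,0^{M}]_2=(2^{M}-1)2^{M}$ (an antipalindrome divisible by $p$ as soon as $d\mid M$), or a block-structured refinement of it, and then arrange $b$ and $M$ so that the digits of $B=bA/p$ inherit the reverse-complement symmetry $B\mapsto\overline{B}^R$. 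Here Midy's theorem is the natural tool: when $d$ is even, the period block $w_b$ splits into two halves that are bitwise complements of each other, exactly the internal complementation an antipalindrome demands. The plan would be to stack copies of $w_b$ (and of its reversal $\overline{w_b}^R$) so that the assembled string is not merely complement-symmetric but reverse-complement-symmetric, choosing $M$ and $b$ to absorb the boundary blocks of $1$'s and $0$'s contributed by the $(2^{M}-1)2^{M}$ factor.

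The main obstacle, and the reason this remains a conjecture, is the clash between the two symmetries involved: divisibility by $p$ naturally produces strings that are complement-symmetric in the \emph{unreversed} sense (as in Midy's theorem), whereas antipalindromy requires complementation composed with \emph{reversal}. Reconciling the two for all residue blocks $w_b$ simultaneously — and doing so when $d$ is odd, where Midy's splitting is unavailable — is where a uniform construction breaks down. A counting approach (showing that antipalindromes of length $2m$ equidistribute modulo $p$, via the transfer matrix underlying the automaton of Section~\ref{antipal-decision} or via exponential sums, so that many are divisible by $p$) locates candidate numerators $A$ but gives no control over whether the cofactor $A/p$ can be rescaled into an antipalindrome, so by itself it cannot close the argument. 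I therefore expect that any successful proof must be an explicit, $p$-dependent block construction tied to the period of $1/p$, and I would first settle the case of even $d$ using Midy's theorem before seeking a device (such as interleaving $w_b$ with $\overline{w_b}^R$ across a doubled period) to cover odd $d$; failing a fully uniform statement, I would retreat to proving the conjecture for all $p$ in a set of density one, or for all $p$ whose smallest prime factor lies in a residue class where the construction is known to succeed.
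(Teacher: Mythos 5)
The statement you were asked to prove is not proved in the paper at all: it appears there as a conjecture, supported only by machine computation (the authors report verifying, with their automaton-based decision procedure, that solutions exist for $4 \le p \le 1000$, and that none exist for $(p,q) \in \{(2,1),(3,1),(3,2)\}$). So there is no paper proof to compare against, and a complete argument from you would have been new mathematics. Your proposal, as you yourself concede, is not a complete proof either; but part of it is sound and genuinely useful. The reduction is correct: if $a \mid p$ and there are antipalindromes $A > B$ with $A/B = a/b$ in lowest terms, then $q = (p/a)\,b < p$ and $p/q = a/b = A/B$; and since for prime $p$ every $q < p$ is automatically coprime to $p$, the conjecture is equivalent to the prime case together with a single witness for $a=4$. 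Your witness $56/42 = 4/3$ checks out ($56 = [111000]_2$ and $42 = [101010]_2$ are both antipalindromic, both appearing in the paper's list of antipalindromic numbers), so all powers of two are genuinely disposed of. This reduction-to-odd-primes observation does not appear in the paper and would be a worthwhile remark to record alongside the conjecture.

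The gap is exactly where you locate it, and it is fatal to the attempt as a proof: for an odd prime $p \notin \qapal$ you must manufacture an antipalindrome $A$ divisible by $p$ together with some $b < p$ such that $bA/p$ is again an antipalindrome, and the Midy-type structure you invoke produces the wrong symmetry. Divisibility by $p$ through $p \mid 2^{dk}-1$ and repetition of the period block of $b/p$ yields strings whose halves are bitwise complements \emph{without} reversal, whereas antipalindromy requires complementation composed with reversal; nothing in your outline bridges these two symmetries, and when the order $d$ of $2$ modulo $p$ is odd, even the complement-symmetry of Midy's theorem is unavailable. The fallback counting argument (equidistribution of antipalindromes modulo $p$) fails for the reason you yourself note: it locates numerators $A$ with $p \mid A$ but gives no control over whether any rescaling $bA/p$ is antipalindromic. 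So your write-up should be read as a correct reduction plus a research plan, not a proof; the conjecture remains open, both in the paper and after your attempt.
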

We note that there are no solutions for $(p,q) \in \{ (2,1), (3,1), (3,2) \}$.

Some solutions to $p/q=A/B$ can be enormously large.  For example, the smallest solution
for $p/q = 960/527$ is $A = $
\begin{align*}
&
  1234883355213990975204467140683475994799335003626682427756930130658317 \\
& 0577845541101597875372665385744362733254798839009872167396310323997903 \\
& 5640547077917392804795250182028753800174169116477800361082899344465944 \\
& 5560841114705454770902394470289417027557405950223685182751710075724367 \\
& 6048238590480983878073501486368624181821560779594741108091349800844282 \\
& 5679592833678865846036391335428845975712764583827139150178213891564696 \\
& 4718825426930262288729775928481863474655184300859716583115484263497126 \\
& 2961706100246193708891656878945533178186000927736300244493837237642640 \\
& 9349549969820438753161560915890436797199051312068851515357512387981254 \\
& 4604809069807177738058155380014435541831993909182136704602824634226568 \\
& 3451444571619483682225669077170879824401082095216563292486986361198314 \\
& 2620371328966957512364597567158981492432747694245025717455343991855418 \\
& 3265938974040814493629275353847375559776274838299843008368743842579023 \\
& 9993356699741468657156369097163207591351729526813712761138142291367822 \\
& 0794954727600533534516312312331038829749723349859042215544591191317981 \\
& 8600650852792742320291709382397741664309047654075764338087057307850282 \\
& 7509649077719055308633225064218430763198619435136533732460140152765958 \\
& 4251780426995592541451396343086191791838699791485099128013340230974422 \\
& 4295888043536875650860208149665479650685364073997568860181548161096442 \\
& 1040420056468998183952438585617409445628800                            \\
\end{align*}
and $B = $
\begin{align*}
&
  6778995085393471290966189407710331763117182780325642077373981029759719 \\
& 6817964585005646670014527690492491254429989459981277418935995216113491 \\
& 4401753229817354251323925478428679715539449212331258232194666193057841 \\
& 4693367369268486086099602977526278890862009747582105117814075103195226 \\
& 3306476428994567747340992534544426498124609696316964207959805677551426 \\
& 1803598159882940633970606601781269054173197246634399293165820008902031 \\
& 6737718749919252355839499107395229699409188818261152492727710488156099 \\
& 5633532446143167547769824741711416509416900926219064883835960669142414 \\
& 2991800355160116905376485444523543667957292098544632797848010713188761 \\
& 4653483122795652791215082138204245109848549897281104617975922731639599 \\
& 1446992596286123963884662538219309036035106918532592241048352211994912 \\
& 6676413441308193843918155394716492151167271196532589094780898788622973 \\
& 5220310826244887897319042827891322083355175414416846514690916719157767 \\
& 1630197716289103982514651189635525006691265214904444011664593620321273 \\
& 2905636890057095548855172797900598575813585472663700495749995394006004 \\
& 5859822910643491695768029630454269344696542851020081314290408346219781 \\
& 3516511082895230704684475092115760543809087940801596635484311046954792 \\
& 6048836302361221555675894508400240357281195730340075421489898976286673 \\
& 1290968738999306958368017654934455999074863197882487388704957092685676 \\
& 966980593499127128065557431896223726923310   \ .                           
\end{align*}






\section{Going further}

We have not examined what surprises might await
us in other bases.  To give just a taste, the
smallest representation of $436$ as the
quotient of base-$10$ palindromes is
$${4062320931846767973606063797676481390232604\over
 9317249843685247645885467425863489427139} \ .$$

\end{document}